%April 26, 2011
\documentclass[11pt,letterpaper]{amsart}

\usepackage{amsmath,amsthm,amssymb,graphics,color}
\usepackage{hyperref} 
\usepackage{xcolor}
\usepackage[english]{babel}
\usepackage{tikz}

\usetikzlibrary{shapes,calc,arrows,through,intersections}
\usetikzlibrary{quotes,angles}
\usetikzlibrary{calc}
\usetikzlibrary{shapes}
\usetikzlibrary{shapes.geometric}
\tikzset{>=latex} % for LaTeX arrow head
\usepackage{pgfplots} % for the axis environment
%\pgfplotsset{compat=1.18} 
%\usepackage[skip=2pt,font=scriptsize]{caption}
%\usepackage[colorlinks=true,
%inkcolor=blue]{hyperref}
%\usepackage{phaistos}

%\usepackage[latin9]{inputenc}

\usepackage{subcaption}
\usepackage{paralist}
\usepackage{graphicx}
\usepackage{esvect}
\usepackage{float}
\usepackage{array}
\usepackage{esvect}
\usepackage{float}
\usepackage{array}
\usepackage{tensor}
\usepackage{amsthm}
\usepackage{amsfonts}
\usepackage{bbm}
\usepackage{dsfont}

\newcommand{\C}{\mathbb{C}}
\newcommand{\Q}{\mathbb{Q}}
\newcommand{\Z}{\mathbb{Z}}
\newcommand{\N}{\mathbb{N}}

\newcommand{\R}{\mathbb{R}}

\newcommand{\T}{\mathbb{T}}
\newcommand{\LL}{\mathcal{L}}

\newcommand{\am}{\text{am}}

\newcommand{\Id}{\text{Id}}

\newcommand{\supp}{\text{supp}}
\newcommand{\dist}{\text{dist}}

\renewcommand{\phi}{\varphi}
\renewcommand{\theta}{\vartheta}
\renewcommand{\epsilon}{\varepsilon}

\newtheorem{theo}{Theorem}[section]
\newtheorem{prop}[theo]{Proposition}

\newtheorem{lemm}[theo]{Lemma}

\theoremstyle{definition}
\newtheorem{def1}[theo]{Definition}
\theoremstyle{remark}
\newtheorem{rema}[theo]{Remark}

\newcommand{\nwc}{\newcommand}
\nwc{\Oph}{\operatorname{Op}_\hbar}
\nwc{\la}{\langle}
\nwc{\ra}{\rangle}

\nwc{\mf}{\mathbf} %Latex (as in \bf not tilted math letters)
\nwc{\blds}{\boldsymbol} %Latex 
\nwc{\ml}{\mathcal} %Latex

\renewcommand{\Im}{\operatorname{Im}}
\renewcommand{\Re}{\operatorname{Re}}

\newcommand{\eps}{\varepsilon}
\newcommand{\tr}{\text{Tr}\,}

\newcommand{\wt}{\widetilde}
\newcommand{\wh}{\widehat}
\newcommand{\sgn}{\text{sgn  \,}}

\renewcommand{\d}{\partial}

\renewcommand{\phi}{\varphi}

\newcommand{\lsp}{\text{LSP}}

%\newtheorem{theo}{{\sc Theorem}}

 %used in MLS compactness paper
%\newtheorem{prop}[theo]{{\sc Proposition}}

\addtolength{\baselineskip}{1pt}

\newcommand{\black}[1]{\color{black}}

\linespread{1.25}

%SPECIFIC TO THIS PAPER

\newcommand{\dt}{\delta}

\newcommand{\ka}{\kappa}

\newcommand{\len}{\text{length}}
\newcommand{\E}{\mathcal{E}}

\newcommand{\Ee}{\mathcal{E}_e}
\newcommand{\Fm}{\mathcal{F}_m}

\allowdisplaybreaks

\title{Silent Orbits and Cancellations in the Wave Trace}

%    Information for second author
\author{Illya Koval}
\address{Institute of Science and Technology Austria, Klosterneuburg, Lower Austria}
\email{Illya.Koval@ist.ac.at}
%	\thanks{Support information for the second author.}

%    Information for second author
\author{Amir Vig}
\address{Department of Mathematics, University of Michigan, Ann Arbor, MI 48109, USA}
\email{Vig@umich.edu}
%	\thanks{Support information for the second author.}

%    General info
\subjclass[2010]{Primary 35P20, 58C40; Secondary 58J53}
\date{\today}

%\dedicatory{This paper is dedicated to the memory of our friend, colleague and mentor Steven Morris Zelditch.}

\keywords{Poisson summation formula, Poisson relation, Birkhoff billiards, Kac's problem.}

\begin{document}
	
	\begin{abstract}
        This paper shows that the wave trace of a bounded and strictly convex planar domain may be arbitrarily smooth in a neighborhood of some point in the length spectrum. In other words, the Poisson relation, which asserts that the singular support of the wave trace is contained in the closure of $\pm$ the length spectrum, can almost be made into a strict inclusion. To do so, we construct large families of domains for which there exist multiple periodic billiard orbits having the same length but different Maslov indices. Using the microlocal Balian-Bloch-Zelditch parametrix for wave invariants developed in our previous paper, we solve a large system of equations for the boundary curvature jets, which leads to the required cancellations. We call such periodic orbits \textit{silent}, since they are undetectable from the ostensibly audible wave trace. Such cancellations show that there are potential limitations in using the wave trace for inverse spectral problems and more fundamentally, that the Laplace spectrum and length spectrum are inherently different mathematical objects, at least insofar as the wave trace is concerned.
	\end{abstract}
	\maketitle
	\section{Main results}
	In basic Fourier analysis, the Poisson summation formula on $\R$ can be written as
	\begin{align}\label{PSF}
		\sum_{k \in \N} \cos kt  = -\frac{1}{2} + \pi \sum_{j \in \Z} \dt_{2\pi j}(t),
	\end{align}
	in the sense of distributions. If one observes that $- \d_t^2 \cos (kt) = k^2 \cos (k t)$, the lefthand side of \ref{PSF} can be interpreted as the ``wave trace,'' $\tr \cos ( t \sqrt{- \d_t^2})$, which is a spectral quantity. The righthand side of \ref{PSF} is a sum of distributions with singular support at multiples of $2\pi$, which are \textit{exactly the periods of closed geodesics} on the circle $\R / 2\pi \Z$. These, by contrast, are geometric in nature. In particular, the Poisson summation formula on the circle implies the \textit{Poisson relation}: the singular support of the wave trace is contained in $\pm$ the lengths of closed geodesics. Now denote by $\Omega$ a bounded, smooth and strictly convex planar domain and consider the eigenvalue problem for the Laplacian with Dirichlet boundary conditions:
	\begin{align*}
		\begin{cases}
			-\Delta u_j = \lambda_j^2 u_j,\\
			u_j \big|_{\d \Omega} = 0.
		\end{cases}
	\end{align*}
	The even wave trace is defined by
	\begin{equation*}
		w(t) := \tr \cos t \sqrt{-\Delta} = \sum_{j = 1}^\infty \cos(t \lambda_j),
	\end{equation*}
	again to be interpreted in the sense of distributions. The Poisson relation, in this setting due to Anderson and Melrose \cite{AndersonMelrose}), connects the singular support of $w(t)$ to the length spectrum of $\Omega$:
	\begin{equation}
		\text{SingSupp }w(t) \subset \left\{0\right\}\cup \pm \overline{\text{LSP}(\Omega)},
		\label{eq13}
	\end{equation}
	where $\lsp(\Omega)$ consists of the lengths of all periodic billiard trajectories in $\Omega$. As before, the beauty of \ref{eq13} is that the lefthand side is entirely spectral whereas the righthand side is geometric. However, in principle, the wave trace need not be singular at every point of length spectrum, which leads us to our main result:
	
	\begin{theo}
		Denote by $\mathcal{E}_e$ an ellipse of eccentricity $e$. For any $0 \le e < 1$ and $N \in \mathbb{N}$, there exist arbitrarily $C^\infty$-small deformations $\Omega$ of $\mathcal{E}_e$ such that for some $L$ in the length spectrum of $\Omega$, $w_\Omega(t)$ is locally $C^N$ near $L$.
		\label{mainth}
	\end{theo}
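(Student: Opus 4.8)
\textbf{Reduction to a cancellation of wave invariants.} The first step is to note that if $\Omega$ is strictly convex, $L>0$ lies in its length spectrum, and the periodic billiard trajectories of length exactly $L$ are nondegenerate orbits $\gamma_1,\dots,\gamma_K$, then near $t=L$ one has
\[
w_\Omega(t)=\sum_{i=1}^K w_{\gamma_i}(t)+f(t),\qquad f\in C^\infty\ \text{near}\ L,
\]
where each $w_{\gamma_i}$ has a classical singularity expansion $w_{\gamma_i}(t)=\Re\sum_{j\ge 0}a_{\gamma_i,j}\,\sigma_j(t-L+i0)$ with the $\sigma_j$ fixed (log-)homogeneous distributions whose regularity increases with $j$, so that $\sigma_j$ is $C^N$ near $0$ once $j>J=J(N)$, and with $a_{\gamma_i,0}$ a nonzero multiple of $e^{i\pi m_{\gamma_i}/2}\,\ell_{\gamma_i}^{\#}\,|\det(I-P_{\gamma_i})|^{-1/2}$ (Maslov index $m_{\gamma_i}$, primitive length $\ell_{\gamma_i}^{\#}$, linearized Poincar\'e map $P_{\gamma_i}$). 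Consequently $w_\Omega$ is locally $C^N$ near $L$ as soon as $\sum_{i=1}^K a_{\gamma_i,j}=0$ for $0\le j\le J$. So it suffices to produce $\Omega$ arbitrarily $C^\infty$-close to $\Ee$ carrying $K$ nondegenerate orbits of one common length $L$ whose wave invariants cancel through order $J$.

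\textbf{Producing the orbits.} Starting from the integrable ellipse $\Ee$, a confocal caustic of rational rotation number $p/q$ carries a one-parameter family of $q$-periodic orbits all of the same length (the length is constant along any connected family of periodic orbits, since each reflection point is a critical point of the length functional). Under a $C^\infty$-small deformation this family breaks, by the Poincar\'e--Birkhoff theorem and its refinements, into finitely many nondegenerate orbits occurring with several distinct Morse and hence Maslov indices; by combining several caustics of different rotation numbers (and imposing the equality of their lengths as extra constraints) one arranges a collection $\gamma_1,\dots,\gamma_K$ whose Maslov phases $e^{i\pi m_{\gamma_i}/2}$ admit a nontrivial null linear combination — in particular $K\ge 4$ in general — which is exactly what makes the leading-order cancellation possible. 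Keeping the reflection symmetries of $\Ee$ (or matching orbits in symmetric pairs) forces some of the identities $\ell_{\gamma_i}=\ell_{\gamma_1}$ to hold automatically; the remaining ones are imposed as equations.

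\textbf{Solving the curvature-jet system.} By the microlocal Balian--Bloch--Zelditch parametrix of our previous paper, each wave invariant $a_{\gamma_i,j}$ is an explicit functional of the finitely many curvature jets of $\d\Omega$ at the reflection points of $\gamma_i$ — polynomial after the standard normalizations — whose dependence on the highest-order jet entry it involves is affine with a nonvanishing, computable coefficient. This triangular structure lets us solve
\[
\sum_{i=1}^K a_{\gamma_i,j}=0\ \ (0\le j\le J),\qquad \ell_{\gamma_i}=\ell_{\gamma_1}\ \ (2\le i\le K)
\]
inductively: the length constraints and the lowest-order cancellations are handled first using the low-order curvature data together with the Maslov bookkeeping, and each successive equation is then satisfied by adjusting one higher-order Taylor coefficient of the boundary near a reflection point without disturbing the equations already solved. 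Since only finitely many (bounded in terms of $N$) boundary coefficients are modified, and at arbitrarily small scale, the resulting $\Omega$ is $C^\infty$-close to $\Ee$; a final genericity argument ensures that no uncontrolled additional periodic orbit has length exactly $L$.

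\textbf{Main obstacle.} The hard part is the last step: showing that the large linear(ized) system governing the $a_{\gamma_i,j}$ genuinely has the rank predicted by the triangular/nonvanishing-leading-coefficient structure — precisely the information the BBZ parametrix is designed to extract — together with the bookkeeping needed to (i) preserve nondegeneracy and the common length of all $\gamma_i$ along the deformation while taming the degeneracy $\det(I-P)=0$ of the ellipse's caustic families out of which these orbits are born, (ii) arrange the Maslov phases so that the cancellation is possible without forcing every coefficient separately to vanish (which would be impossible), and (iii) control the interaction between orbits whose reflection points are close or coincide, since those then share curvature jets — handled through the choice of $q$, of the caustics, and of the support of the perturbation.
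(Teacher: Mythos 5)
Your high-level skeleton (localize the wave trace at a multiple length $L$, break caustic families of an ellipse into nondegenerate orbits with different Maslov phases, and cancel the wave invariants through order $J(N)$) is the same as the paper's, but two of your steps have genuine gaps. First, the length coincidence. For a fixed ellipse $\Ee$, orbits tangent to two distinct rational caustics generically have lengths differing by an amount of order one, and the length of a preserved orbit is determined by the positions of its reflection points, which move only slightly under a $C^\infty$-small deformation; so the constraints $\ell_{\gamma_i}=\ell_{\gamma_1}$ between orbits coming from \emph{different} caustics cannot simply be ``imposed as equations'' on the perturbation. The paper's missing ingredient here is a theorem on \emph{length spectral resonances}: using the analyticity of Mather's $\beta$-function (via the elliptic-integral length formulas) and a quantitative implicit function theorem, one shows there is a dense set of eccentricities for which two caustics of distinct rotation numbers have exactly equal orbit lengths, and moreover with the residues of $q,q'$ modulo $4$ prescribed. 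That arithmetic condition ($q\equiv 0$, $q'\equiv 2 \bmod 4$) is not decoration: at $j=0$ the invariants are positive multiples of fixed Maslov phases with no jet freedom, and with a single caustic (or $q\equiv q'\bmod 4$) only two orthogonal phases occur, so the leading singularity cannot cancel without all amplitudes vanishing. Your ``$K\ge 4$ in general'' gestures at this but gives no mechanism to obtain equal-length orbits with opposite phases.

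Second, your inductive ``triangular'' scheme assumes each $a_{\gamma_i,j}$ depends affinely, with nonvanishing coefficient, on a new top-order curvature jet, so one higher Taylor coefficient per equation suffices. For the orbits relevant here — born from degenerate caustic families, hence \emph{nearly degenerate} with $|\det(I-P_\gamma)|\to 0$ as the deformation shrinks — the BBZ analysis shows the leading behavior of $B_{j,\gamma}$ is governed by only two deformable parameters per orbit, $\eps_u=|\det\partial^2\LL|$ and $\dt_u=\partial^3_{deg}\LL$, through the scaling $\dt_u^{2j}\eps_u^{-3j-1/2}$, with all higher jets relegated to remainders; consequently the number of orbits must grow with $N$ (the paper uses $4m$ orbits, $m$ elliptic and $m$ hyperbolic on each of the two caustics), and the system is solved globally via a Vandermonde structure plus a Brouwer fixed-point argument with the scale separation $\eps_{\text{com}}\ll \dt_{\text{com}}^{3m}$, not jet-by-jet. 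Your scheme also ignores that the required jet adjustments need not be small (the invariants blow up like $\eps^{-3j-1/2}$), threatening $C^\infty$-smallness. Finally, ``a final genericity argument'' for excluding stray orbits of length $L$ is not available: genericity conflicts with the exact cancellation and orbit-preservation constraints, and eliminating strays (rotations along the caustics, minor-axis bouncing balls, covered orbits) is the technical heart of the paper, handled by correctly selected orbits and an explicit perimeter-decreasing nonlocal deformation, not by genericity.
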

	
	\subsection{Wave invariants}
    We now establish the trace formula which leads to Theorem \ref{mainth}. Let $L$ be isolated in $\lsp(\Omega)$ with finite multiplicity and assume that all corresponding periodic orbits are nondegenerate. To study the smoothness of $w(t)$ locally near a length $L$, choose $\wh{\rho} \in C_c^\infty$ to be identically equal to $1$ in a neighborhood of $L$ and satisfy $\supp(\wh{\rho}) \cap \lsp(\Omega) = L$. We then have an asymptotic expansion of the form
	\begin{align}
		\int_0^\infty \wh{\rho}(t) w(t) e^{i\lambda t} dt \sim \sum_{\gamma: \text{length}(\gamma) = L} \mathcal{D}_\gamma \sum_{j = 0}^\infty B_{j, \gamma} \lambda^{-j},
		\label{eq14}
	\end{align}
	as $\Re \lambda \to \infty$, where the righthand side is a sum over all periodic orbits $\gamma$ having length $L$. We call $\mathcal{D}_{\gamma}$ the \textit{symplectic prefactor} and $B_{j, \gamma}$ the \textit{Balian-Bloch-Zelditch} (BBZ) wave invariants.
	\\
    \\
	As the BBZ expansion involves a sum over all periodic orbits having length $L$, their contributions to \eqref{eq14} may interfere destructively and cause cancellations. This is the approach we take in proving Theorem \ref{mainth}. Since the decay of a function is equivalent to smoothness of its Fourier transform, if the invariants $B_{j, \gamma}$ sum (over $\gamma$) to $0$ for each $j = 1, 2, \cdots, m$, then the series \eqref{eq14} will be $O(k^{-m-1})$ and $w(t)$ will be locally $C^{m-1}$. If they sum to zero for all $j \in \N$, then the wave trace will be locally $C^{\infty}$. In our previous paper \cite{KKV}, we derived formulae for $\mathcal{D}_\gamma$ and $B_{j, \gamma}$ in terms of the length functional (see \eqref{lengthfunctional}) when $\gamma$ is a ``nearly degenerate'' periodic orbit. Note that in Theorem \ref{mainth}, $\Omega$ is a deformation of an ellipse, the integrability of which guarantees that periodic orbits in $\Omega$ will be nearly degenerate in the sense of \cite{KKV}. Deriving formulae for the Balian-Bloch-Zelditch invariants there used primarily microlocal methods, whereas here, we employ dynamical techniques to orchestrate cancellations amongst them.

	\subsection{Outline of the proof}
	We aim to show that for some $L$ which is isolated in the length spectrum of a domain $\Omega$, we have
    \begin{equation}
		\sum_{\gamma: \text{length}(\gamma) = L} \mathcal{D}_\gamma B_{j, \gamma} = 0, \; \; \; \forall j = 0, 1,\ldots, m.
		\label{eq15}
	\end{equation}
    The symplectic prefactor $\mathcal{D}_\gamma$ in \eqref{eq14} has an oscillatory part $\exp ik L$ together with an additional complex phase, $\exp i \pi m_\gamma /4$, where $m_\gamma$ is the \textit{Maslov index} of $\gamma$ (see Theorem \ref{partone}). To create cancellations, we begin with an ellipse and deform it in such a way that the Maslov indices of two families of periodic orbits in our new domain differ by $4 \mod 8$. This way, the complex exponential for one family is the negative of another. At the level of geometry, this can be arranged by ensuring that each family of periodic orbits have distinct rotation numbers in the new domain and have varying ellipticity/hyperbolicity. Since the invariants $B_{j, \gamma}$ are in fact complex valued, we actually need $4$ families, corresponding to the phases $\pm 1$ and $\pm i$. The additional phases are created by separating out elliptic and hyperbolic orbits. The technicalities of how we can select sufficiently many orbits of a fixed length $L$, prescribe their ellipticity/hyperbolicity together with opposite Maslov factors, all while ensuring that no additional periodic orbits of length $L$ emerge in the deformation, is outlined below.
    \\
    \\
	\noindent \textbf{Step 1.} Start with an ellipse $\Ee$ of eccentricity $e$ and choose two rational caustics, $\Gamma$ and $\Gamma'$, which have distinct rotation numbers $p/q$ and $p'/q'$, but are such that all corresponding tangent billiard orbits have the same length $L$. Not every ellipse has a pair of such caustics. In fact, the set of eccentricites for which this happens has Lebesgue measure zero (see Remark \ref{resonancezeormeasure}). However, using elliptic integrals and tools from Aubry-Mather theory, in particular Mather's $\beta$ function, we show that there is in fact a dense set of eccentricities $e \in [0,1)$ for which there exist at least two such caustics; see Theorem \ref{Eccentricities}. We call this situation a \textit{length spectral resonance}.
	\\
    \\
	\textbf{Step 2.} Select a family $\Fm$ consisting of $4m$ distinct periodic orbits in $\Ee$, all of which have length $L$, such that $2m$ of them are tangent to the caustic $\Gamma$ and the other $2m$ are tangent to $\Gamma'$. It is clear that each of these orbits is degenerate. We will smoothly deform $\Ee$ while preserving both convexity and first order contact at the $2m(q +q')$ reflection points of the $4m$ orbits in $\Fm$.
    \\
    \\
    \textbf{Step 3.}
    Let $\bar{\eps}, \bar{\dt} \in \R^{4m}$ be small perturbation parameters and denote by $n_x \in \R^2$ the outward pointing unit normal vector to $\Ee$ at the point $x \in \Ee$. We introduce a multiscale family of deformations of $\Ee$, which we denote by
    \begin{align*}
        \Omega_{\bar{\eps}, \bar{\dt}} := \left\{ x + \mu_{\bar{\eps}, \bar{\dt}}(x) n_x: x \in \Ee \right\},
    \end{align*}
    shortened to $\Omega$ moving forward, where $\mu_{\bar{\eps}, \bar{\dt}}$ is a smooth $8m$-parameter family of functions. We insist that the deformation preserve all orbits from $\Fm$ in the sense described above. $\Fm$ remains a family of periodic orbits in $\Omega$ which have length $L$ and they will be nearly degenerate in the sense of \cite{KKV}. We can deform in such a way that of the $2m$ type-$(p,q)$ orbits which are tangent to $\Gamma$, $m$ of them become elliptic and $m$ of them become hyperbolic. Similarly, we can deform so that $m$ of the orbits tangent to $\Gamma'$ become elliptic while $m$ become hyperbolic. Suppose we can arrange for such perturbations to destroy all other periodic orbits of length $L$; this is in fact the main difficulty in Section \ref{Destroying other orbits}. This combination of ellipticity and hyperbolicity will then generate all $4$ different Maslov factors mentioned above.
    \\
    \\
	\textbf{Step 4.} Let $\ell = |\d \Omega|$ and denote by $ds$ the arclength measure along $\d \Omega$. If $x: \R/\ell \Z: = \T_\ell \to \R^2$ is an arclength parametrization of $\d \Omega$, we denote the parametrization of $\d \Omega^q$ by $x(S) \in \R^{2q}$ with $S = (s_1, \cdots, s_q) \in \T_\ell^q$ and $x_i(S)$ being the $(x_i^1, x_i^2)$ coordinates, $i \in \Z/q\Z$. The \textit{length functional} is defined by
	\begin{align*}
		\LL(S) = \sum_{i = 1}^q |x_{i+1}(S) - x_i(S)|.
	\end{align*}
	The BBZ wave invariants $B_{j, \gamma}$ are calculated in \cite{KKV} by applying the method of stationary phase to an oscillatory integral representation of the lefthand side of equation \ref{eq14}. This oscillatory integral has the form
	\begin{align*}
		\int_0^\infty e^{ikt} \wh{\rho}(t) w(t) dt = \int_{\T_\ell^q} e^{i k \LL(S)} a_0(S; k) dS,
	\end{align*}
	where $a_0$ is an amplitude depending on the lengths of links $\overline{x_i(S)x_{i+1}(S)}$ for any $q$-link configuration in $\d \Omega^q$. In order to match the $B_{\gamma,j}$ coefficients in Theorem 1 of \cite{KKV} (Theorem \ref{partone} below), we must prescribe the length functional's jet at reflection points on the boundary. To do so, it is sufficient to deform the curvature of the boundary together with its first derivative. The necessary $q$ and $q'$-fold curvature jets will be obtained as the solution to a large system of equations associated to all $2m(q+q')$ points of reflection.
    \\
    \\
	\textbf{Step 5.} Upon deforming $\Ee$, we may create additional orbits which now share the same length $L$, where we are trying to make the wave trace smooth. We refer to these accidental length spectral coincidences as \textit{stray} orbits. As they interfere with our cancellation procedure, we need to choose a deformation which both satisfies the equations in Step 4 above and destroys all stray orbits. To do so, we introduce the notion of \textit{controllable} families of domains, which specifies that the family $\Fm$ of periodic orbits is fixed, they obtain the correctly matched Maslov factors upon deformation, no stray orbits are created, they preserve convexity, etc. We explicitly construct controllable families by finding \textit{correctly selected} orbits (Definition \ref{def62}) and decomposing $\mu$ into ``local'' and ``nonlocal'' zones. The nonlocal zone is away from the reflection points of fixed orbits in $\Fm$ and the deformation there ensures that there can be no stray orbits in the sense described above. The local zone is near the reflection points, where the deformation is more delicate.
    \\
    \\
	\textbf{Step 6.} Having established the existence of controllable families, the system of equations for canceling BBZ wave invariants takes the form
	\begin{equation}
		\begin{cases}
			\sum_{u = 1}^m C_u \dt_u^{2j} \eps_u^{-3j- 1/2} = \sum_{u = 2m+1}^{3m} C_u \dt_u^{2j} \eps_u^{-3j- 1/2}+\overline{\mathcal{R}}_j, & 0 \leq j \leq m,\\
			\sum_{u = m+1}^{2m} C_u \dt_u^{2j} \eps_u^{-3j- 1/2} = \sum_{u = 3m+1}^{4m} C_u \dt_u^{2j} \eps_u^{-3j- 1/2} + \overline{\overline{\mathcal{R}}}_j, & 0 \leq j \leq m,
		\end{cases}
    \label{introequation}
	\end{equation}
	which has a leading order part in $\bar{\eps}, \bar{\dt}$ together with a remainder. The leading order part can be transformed via homogeneity in the deformation parameters to a system of $2m$ nonlinear equations for $2m$ free deformation parameters, which can be solved explicitly. We do this in Section \ref{Linearized}. The Jacobian is shown to be a Vandermonde matrix, whose invertibility guarantees by the inverse function theorem the existence of a nearby solution which accounts for the remainders in \eqref{introequation}. To make this quantitative, we employ a fixed point argument. This requires us to match all parameters simultaneously rather than by an iterative procedure.

	\subsection{Structure of the paper}
	In Section \ref{Background}, we review the existing literature on trace formulae and their historical role in inverse problems. In Section \ref{Billiard dynamics}, we introduce the necessary dynamical preliminaries on billiards, integrability of the ellipse, and elements of Aubry-Mather theory. We also briefly review the Balian-Bloch-Zelditch parametrix in \ref{BBZSubsection} and explain how it was used in \cite{KKV} to compute the BBZ wave invariants $B_{\gamma, j}$. Section \ref{Maslov} describes the way in which ellipticity and hyperbolicity of deformed periodic orbits are translated into ``conjugate'' Maslov indices which generate the complex phases $\pm 1$ and $\pm i$, a prerequisite for cancellations. In Section \ref{LengthCoincidences}, we show how integrability of the ellipse and corresponding analyticity of Mather's $\beta$-function can be used to find length spectral resonances for a dense set of eccentricities. Section \ref{Destroying other orbits} deals with stray orbits which may interfere the cancellation procedure. It also prescribes a specific structure for the required deformation and is the most difficult section of the paper. Finally, in Section \ref{Solving the main equation}, we reduce the leading order terms to an algebraic system of equations which can be solved explicitly. We then employ a fixed point argument to find nearby solutions which account for lower order terms in the perturbation parameters.

	\section{Background}\label{Background}
		
		\subsection{Trace formulae}

		The relationship between spectral and geometric data through the Poisson relation has historically made the wave trace a powerful tool in studying Kac's famous problem: ``Can you hear the shape of a drum?'' Mathematically, this amounts to determining the shape of a domain (manifold, billiard table, etc.) from knowledge of its Laplace eigenvalues. The Poisson relation \eqref{eq13} is accompanied by a trace formula reminiscent of that for the circle \eqref{PSF}. The \textit{Poisson summation formula}, also called the \textit{wave trace expansion}, by contrast, is an asymptotic singularity expansion near each length $L$ in the length spectrum, the coefficients of which are spectral invariants of $\Omega$ containing local geometric data associated to closed orbits of length $L$. The Poisson relation is more qualitative in that it only relates the Laplace spectrum with the lengths of closed geodesics without providing much insight to geometry near the underlying orbits themselves.
        \\
        \\
		The Poisson summation formula has been extended to hyperbolic surfaces by Selberg \cite{Sel56} and to Riemannian manifolds by Duistermaat and Guillemin \cite{DuGu75}. In the case of billiard tables, the Poisson relation was derived using propagation of singularities by Anderson and Melrose \cite{AndersonMelrose}, with Guillemin and Melrose later proving a trace formula for nondegenerate orbits in \cite{GuMe79a}. In the physics literature, Gutzwiller derived an earlier semiclassical analogue for the Schr\"odinger equation in \cite{Gutzwiller4}. For billiard tables, we have the following trace formula when all orbits of a given length are isolated and nondegenerate:
		\begin{theo}[\cite{GuMe79b}, \cite{PeSt17}] \label{aj}
			Assume $\gamma$ is a nondegenerate periodic billiard orbit in a bounded, strictly convex domain with smooth boundary and $\gamma$ has length $L$ which is simple. Then near $L$, the even wave trace has an asymptotic expansion of the form
			\begin{align*}
				\text{Tr} \cos t \sqrt{-\Delta} \sim \Re \left\{ a_{\gamma,0} (t - L + i0)^{-1} + \sum_{k = 0}^\infty a_{\gamma, k} (t - L + i0)^{k} \log(t - L + i0) \right\},
			\end{align*}
			where the coefficients $a_{\gamma k}$ are wave invariants associated to $\gamma$. The leading order term is given by
			\begin{align*}
				a_{\gamma, 0} =  \frac{e^{i \pi m_\gamma / 4}  L_\gamma^\#}{|\det (I - P_{\gamma})|^{1/2}},
			\end{align*}
			with $L^\#$ being the primitive period of $\gamma$, $m_\gamma$ the Maslov index and $P_\gamma$ the linearized Poincar\'e map.
		\end{theo}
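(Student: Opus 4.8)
The plan is to localize the trace near $L$, reduce it to a finite–dimensional oscillatory integral over the reflection points of $\gamma$ on $\d\Omega$, and then extract the singularity by stationary phase, exactly as in the boundaryless model of Duistermaat–Guillemin \cite{DuGu75}. First I would fix $\wh\rho\in C_c^\infty$ equal to $1$ near $L$ with $\supp\wh\rho\cap\lsp(\Omega)=\{L\}$, so that $\int_0^\infty\wh\rho(t)\,w(t)\,e^{i\lambda t}\,dt$ agrees with the Fourier–Laplace transform of $w$ near $L$ modulo a function analytic in $\lambda$, and it suffices to expand this integral as $\Re\lambda\to+\infty$. Since $\gamma$ is nondegenerate it reflects transversally and is nonglancing, so by Melrose–Sjöstrand propagation of singularities the Dirichlet wave group is, microlocally near the broken bicharacteristic carrying $\gamma$, a Fourier integral operator associated to the billiard flow. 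Concretely I would build a parametrix from layer potentials / the outgoing free resolvent kernel ($H_0^{(1)}(\lambda|x-y|)$ in dimension two), iterate it around the $q$ links of $\gamma$, and obtain a representation
\begin{align*}
\int_0^\infty\wh\rho(t)\,w(t)\,e^{i\lambda t}\,dt \;=\; \int_{\T_\ell^q} e^{i\lambda\LL(S)}\,a(S;\lambda)\,dS \;+\; O(\lambda^{-\infty}),
\end{align*}
where $q$ is the number of reflections of $\gamma$, $\LL$ is the length functional, and $a(\,\cdot\,;\lambda)$ is a classical symbol of order $(q-1)/2$ whose leading part is a product of inverse square roots of the link lengths; this is the Balian–Bloch construction underlying \cite{KKV}. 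An equivalent route, followed in \cite{PeSt17}, is to quantize the billiard ball map $\beta$ as a Fourier integral operator and compute $\sum_r\Tr$ of the quantizations of $\beta^r$.

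Next I would analyze the critical manifold. The critical points of $\LL$ on $\T_\ell^q$ are precisely the $q$-periodic billiard trajectories, and the choice of $\wh\rho$ forces only $\gamma$ (and its time reversal) into the support of the amplitude; the critical set attached to $\gamma$ is a single circle — the orbit traversed once, of primitive length $L_\gamma^\#$, so $L=rL_\gamma^\#$ for some $r\in\N$. The nondegeneracy hypothesis, equivalently $\det(I-P_\gamma)\neq 0$, is exactly the statement that this critical circle is clean of excess one: the Hessian of $\LL$ is nondegenerate on the $(q-1)$-dimensional space transverse to the circle. The standard identity relating the Hessian of the $q$-fold billiard generating function to the linearized Poincar\'e map — computed via Jacobi fields along the links, with the boundary curvature entering at the reflections — shows that this transverse Hessian determinant equals $\det(I-P_\gamma)$ times an explicit positive factor built from the link lengths, the very factor already present in the leading symbol $a$, so that the two cancel.

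Then I would apply stationary phase with a one-dimensional excess. Integration along the critical circle contributes the factor $L_\gamma^\#$ and the oscillation $e^{i\lambda L}$; the nondegenerate transverse stationary phase contributes $|\det(I-P_\gamma)|^{-1/2}$ together with a Maslov–Morse phase $e^{i\pi\sigma/4}$, where $\sigma$ is the signature of the transverse Hessian. Tracking $\sigma\bmod 8$ around the closed orbit, with each strictly convex reflection and each conjugate point along a link contributing in the usual way, identifies $\sigma$ with the Maslov index $m_\gamma$; this bookkeeping, cleanest in the Keller–Maslov / symplectic framework, is the step that requires the most care and is the main obstacle. Assembling the universal constants coming from stationary phase, the Hankel asymptotics and the normalization of the even wave trace — which combine to $1$ — one gets
\begin{align*}
\int_0^\infty\wh\rho(t)\,w(t)\,e^{i\lambda t}\,dt \;\sim\; e^{i\lambda L}\sum_{k\ge 0}c_{\gamma,k}\,\lambda^{-k}, \qquad c_{\gamma,0}=\frac{e^{i\pi m_\gamma/4}\,L_\gamma^\#}{|\det(I-P_\gamma)|^{1/2}},
\end{align*}
the expansion running in integer powers of $\lambda^{-1}$ precisely because the transverse Hessian is nondegenerate.

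Finally I would translate this amplitude expansion into the stated singularity structure using the conormal Fourier–Laplace dictionary: $\lambda^0 e^{i\lambda L}$ is a constant multiple of the transform of $(t-L+i0)^{-1}$, while $\lambda^{-1-k}e^{i\lambda L}$ for $k\ge 0$ is a constant multiple of the transform of $(t-L+i0)^k\log(t-L+i0)$. Hence $w(t)$, modulo $C^\infty$ near $L$, is a constant times $c_{\gamma,0}(t-L+i0)^{-1}$ plus a series $\sum_{k\ge 0}a_{\gamma,k}(t-L+i0)^k\log(t-L+i0)$ with $a_{\gamma,k}$ proportional to $c_{\gamma,k+1}$; since $\cos t\sqrt{-\Delta}$ is real and the reversed orbit $\gamma^{-1}$ contributes the complex conjugate, taking real parts yields the claimed expansion, with $a_{\gamma,0}=c_{\gamma,0}=e^{i\pi m_\gamma/4}L_\gamma^\#/|\det(I-P_\gamma)|^{1/2}$.
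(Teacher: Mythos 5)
You should first note that the paper does not prove this statement at all: it is quoted verbatim from the literature (\cite{GuMe79b}, \cite{PeSt17}) and only later related, ``algebraically and canonically,'' to the Balian--Bloch--Zelditch invariants of Theorem \ref{partone}. So the comparison is with the standard proofs, and your overall strategy (localize in time, reduce to an oscillatory integral, stationary phase, conormal dictionary) is indeed the standard one. However, there is a genuine internal inconsistency in the middle step. Once you reduce to the boundary integral $\int_{\T_\ell^q} e^{i\lambda \LL(S)}\,a(S;\lambda)\,dS$, the critical set of $\LL$ attached to a \emph{nondegenerate} orbit is \emph{not} a circle: billiards have no continuous time-translation symmetry in this representation, and the critical points are the $2q$ isolated configurations obtained from $S_\gamma$ by cyclic permutation and time reversal. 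Nondegeneracy, $\det(I-P_\gamma)\neq 0$, is equivalent (via the Kozlov--Treshchev identity, Proposition \ref{DetPoincare}) to invertibility of the \emph{full} $q\times q$ Hessian $\partial^2\LL(S_\gamma)$ --- there is no $(q-1)$-dimensional ``transverse'' space and no excess-one clean intersection. The clean circle of excess one, and the factor $L_\gamma^{\#}$ arising from ``integration along the critical circle,'' belong to the phase-space FIO formulation of Duistermaat--Guillemin and Guillemin--Melrose, where the fixed-point manifold of the broken flow in $S^*\Omega$ is the orbit itself; you have grafted that bookkeeping onto the boundary-reduction picture, where it is simply false.

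Concretely, in the formulation you chose, the primitive-length factor must come from the amplitude (the factor $\LL(S)$ produced by the $t$-integration, visible in the symbol $a_0^\sigma$ of \cite{KKV}) together with the combinatorial count over the $2q$ symmetric critical points, exactly as in Theorem \ref{partone}, where the prefactor carries $L_\gamma/q$ and one multiplies by $2q$ when summing over the dihedral symmetries; and the stationary-phase determinant is $|\det\partial^2\LL(S_\gamma)|^{1/2}$, which converts to $|\det(I-P_\gamma)|^{1/2}$ only after cancelling the product of the off-diagonal factors $b_i$ against the link-length factors in the leading symbol. As written, your stationary-phase step would neither produce $L_\gamma^{\#}$ nor the correct power of $\lambda$ (you would be doing an excess-one computation on a critical set that has no excess), so the leading coefficient would not come out in the stated form without redoing this accounting. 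The Maslov identification and the conormal dictionary at the end are fine, but this step needs to be repaired, either by staying entirely in the FIO/phase-space picture of \cite{GuMe79b} or by carrying out the isolated-critical-point computation as in \cite{KKV} and Theorem \ref{partone}.
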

		They are algebraically and canonically related to the invariants $B_{j, \gamma}$ in Theorem \ref{partone} via basic identities in Fourier analysis. Theorem \ref{mainth} amounts to constructing domains for which $\sum_\gamma a_{\gamma, k} = 0$ when $0 \leq k \leq m$.

		\subsection{Dynamical inverse problems}
		One also considers the dynamical inverse problem: ``Can you determine a drum from the lengths of all closed geodesics?" This problem makes sense for both Riemannian manifolds and billiard tables. In either case, one can study the more refined \textit{marked} length spectrum, which is a function associating to each homotopy class the lengths of periodic orbits in that class. For billiards, the homotopy class of a periodic orbit is specified by its winding number $p$ and its bounce number $q$ (with the rotation number $\omega$ being $p/q$). The marked length spectrum then returns for each $(p, q)$ with $0 < p < q$, the set of lengths of all corresponding periodic orbits. One also considers the \textit{maximal} marked length spectrum, which returns, for each $p/q$, the maximal length of a periodic orbit having rotation number $p/q$. For negatively curved manifolds, each free homotopy class $c$ has a unique closed geodesic $\gamma(c)$, and it is conjectured by Burns and Katok that the map $c \mapsto \text{length}(\gamma(c))$ uniquely determines the metric \cite{BuKa85}.
        \\
        \\
        Perhaps the most famous dynamical inverse problem in this category is the Birkhoff conjecture, which asserts that ellipses are the only domains with integrable billiard map. By integrable, we mean that there exists a local foliation of the the phase space by invariant curves (interpreted as Lagrangian tori in the KAM setting). A related conjecture in the boundaryless setting asks whether all integrable metrics on the $2$-torus need be Liouville metrics of the form $(f(x) + g(y))(dx^2 + dy^2).$

		\subsection{Combined invariants}
		The Poisson relation is what connects the two dynamical problems above. In \cite{MaMe82}, Marvizi and Melrose defined a sequence of marked length spectral invariants which they showed are also generically Laplace spectral invariants. The idea is that the lengths of billiard orbits which form polygons have an asymptotic expansion as the number of edges tends to infinity. The coefficients of this expansion are clearly marked length spectral invariants and under a ``noncoincidence condition,'' it is showed that the wave trace (a Laplace spectral quantity) is in fact singular at the minimal and maximal lengths of periodic orbits having a given rotation number. In other words, the Poisson relation is an \textit{equality} at those lengths. Hence, singular support of the wave trace is asymptotically distributed the same as the marked length spectrum is. This shows that their dynamical invariants are also Laplace spectral invariants. The exact form of the wave invariants at these lengths was computed by the second author in \cite{Vig18} and \cite{Vig20}. They were also used in \cite{HeZe19} to hear the shape of ellipses of small eccentricity.
        \\
        \\
		Both wave invariants and dynamical invariants are important and useful in their own right. For example, in \cite{KaDSWe17} the authors use only the length spectrum to prove dynamical spectral rigidity of nearly circular $\Z_2$ symmetric billiard tables. Hezari and Zelditch then used this in \cite{HeZe19} to determine ellipses of small eccentricity by their Laplace spectra without needing to compute any wave invariants. By contrast, Zelditch used the combinatorial structure of wave invariants associated to nondegenerate bouncing ball orbits in generic $\Z_2$ symmetric analytic domains to determine the Taylor coefficients of a boundary defining function within that class (\cite{Zel09}, \cite{Zelditch1} and \cite{Zelditch3}). It is known that that the wave invariants give rise to a so called quantum Birkhoff normal form, which microlocally describes the behavior of solutions to the wave equation. It was showed separately by Guillemin (\cite{GuilleminWTI}, \cite{GuilleminWTITZ}) and Zelditch (\cite{ZelditchWTI}) that the quantum Birkhoff normal form (equivalently, the BBZ wave invariants) fully determine the classical Birkhoff normal form and usually contains even more geometric information. In general, using a combination of both Laplace and length spectral data seems to be most effective in tackling the inverse problem.
		\\
		\\
		This all starts to break down when inclusion in the Poisson relation \eqref{eq13} is strict. The ambiguity surrounding equality versus strict inclusion is already visible in Theorem \ref{aj}. If the length spectrum is not simple and several orbits all have the same length, complex phases arising from the Maslov factors $e^{i\pi m_\gamma/4}$ could cause a cancellation between all $a_{\gamma,k}$. If all wave invariants sum to zero, the Poisson relation would then be a strict inclusion. In fact, it was pointed out by Colin de Verdi\`ere that a priori, there is nothing obvious which prevents the wave trace from being smooth on all of $\R \backslash \{0\}$; see \cite{ZelditchSurvey2014}. It is known to be singular at $t = 0$, which was used by H\"ormander in \cite{HormanderSpectralFunctionofanEllipticOperator} to prove the sharp Weyl law.
		\\
		\\
		It is shown in \cite{PeSt17} that for a residual set of boundary curvatures, the length spectrum of billiard tables is simple, meaning that all primitive orbits have rationally independent lengths. In this case, there is no risk of cancellation amongst wave invariants. However, even with length spectral simplicity, the wave invariants contain only local data associated to closed orbits and it is not immediately clear how to both compute and combine sufficiently many of them to ascertain global information about the underlying geometry. The purpose of this paper is to demonstrate the existence of pathological domains for which the Poisson relation is (almost) strict. We refer to the corresponding periodic trajectories as \textit{silent orbits}, since their contribution to the ostensibly audible Laplace spectrum is negligible insofar as the wave trace is concerned. Theorem \ref{mainth} actually demonstrates that for a rich family of smooth and strictly convex domains, the wave trace can be made locally smooth up to any finite order. This shows that
		\begin{enumerate}
			\item The Laplace spectrum and the length spectrum are fundamentally different objects, at least insofar as the wave trace is concerned.
			\item There are serious limitations to using the wave trace to study inverse spectral problems.
		\end{enumerate}
		
		\subsection{Recent progress}
		Despite these limitations, there has been much recent progress on both dynamical and Laplace inverse spectral problems. While the only explicit examples of Laplace spectrally determined planar domains are ellipses of small eccentricity (\cite{HeZe19}), there are nonexplicit examples in \cite{MaMe82}, \cite{Watanabe1} and \cite{Watanabe2}. Zelditch showed that generic $\Z_2$ symmetric analytic domains are spectrally determined within that class (\cite{Zel09}), which was later extended together with Hezari to generic centrally symmetric analytic domains (\cite{HezariZelditchCentrallySymmetric}). The interior problem was complimented by the inverse resonance problem for obstacle scattering in \cite{Zel0res}. In the boundaryless case, Zelditch also showed that analytic surfaces of revolution are generically spectrally determined in \cite{Zelditch2}. $C^\infty$ compactness of Laplace isospectral sets was shown in \cite{MelroseIsospectralCompactness}, \cite{POS1}, and \cite{POS2}, while spectral rigidity was shown in \cite{HeZe12}, \cite{Zelditch5}, \cite{Vig18}, \cite{Vig20} and \cite{HezariZelditchEigenfunctionAsymptotics}.
		\\
		\\
		In the dynamical setting, recent progress in this direction was achieved by Kaloshin and Sorrentino in \cite{KaSo16}, together with the works \cite{KaAvDS16}, \cite{Koval} and \cite{KaDSWe17}. In the first three, it was shown that ellipses are isolated within the class of integrable domains. The latter two concern spectral rigidity of ellipses and nearly circular domains, respectively. In \cite{BialyMironov}, Birkhoff's conjecture is shown to hold amongst centrally symmetric domains which are foliated near the boundary by caustics having rotation numbers $(0,1/4]$. $C^\infty$ compactness of marked length isospectral sets was demonstrated by the second author in \cite{Vig24}. In the boundaryless case, Guillarmou and Lefeuvre show in \cite{GuillarmouLefeuvre} that negatively curved metrics are locally determined by their the marked length spectrum. See also \cite{KaHuSo18}, \cite{PopTop12}, \cite{Popov1994}, \cite{PopovTopalov}, \cite{GuKa}, and \cite{CdVLSP}.

	\section{Dynamical notations and preliminaries}\label{Billiard dynamics}
	
	We begin by introducing some preliminaries on billiards and then state the main result of \cite{KKV} adapted to ellipses. For a more extensive overview of the length spectral properties of convex billiards, we refer the reader to \cite{SiburgPrincipleofleastaction}, \cite{Sorr15} and \cite{Vig24}.
	
	\subsection{The length functional}
	
	Let $\ell = | \d \Omega|$ and denote by $x: \R /\ell \Z : = \T_\ell \to \R^2$, $s \mapsto (x_1(s), x_2(s))$, an arclength parametrization of $\partial \Omega$. $ds$ is then the usual arclength measure along the boundary. Let $x \in \d \Omega$ and $\sigma \in T_x^*\d \Omega \simeq \R$ satisfy $|\sigma| \leq 1$. There is a unique unit vector $\theta$ in the inward pointing circle bundle $S_x^+(\d \Omega)$ which projects orthogonally onto $\sigma$. Denote by $x(s')$ the subsequent point of intersection with $\d \Omega$ of the line passing through $x$ in the direction of $\theta$. We set $\sigma'$ to be the parallel translate of $\theta$, orthogonally projected onto the contangent space of $\d \Omega$ at $x(s')$. Denoting by $B^*(\d \Omega)$ the coball bundle of the boundary, that is the set of covectors having length $\leq 1$, the billiard  map is a map $B: B^*(\d \Omega) \to B^*(\d \Omega)$ defined by $(s, \sigma) \mapsto (s', \sigma')$.
	\\
	\\
	We now tabulate several important properties of the billiard map. Define the function $h: \T_\ell^2 \to \R$ by
	\begin{align*}
		h(s,s') =|x(s) - x(s')|.
	\end{align*}
	It is easily seen that $h$ is a generating function for the billiard map, in the sense that
	\begin{align}\label{exact}
		\begin{cases}
			\d_s h = - \cos \phi,\\
			\d_{s'} h = \cos \phi',
		\end{cases}
	\end{align}
	and hence, $B(s, -\d_s h) = (s', \d_{s'} h)$. $B$ is \textit{exact symplectic}, in the sense that $B^* \lambda = \lambda$, where $\lambda = \sigma ds$ is the canonical one form on $B^*\d \Omega$. This is precisely the condition \ref{exact}. Taking the exterior derivative, it follows that $B$ preserves the sympectic two form $d \sigma \wedge ds$ and is hence a canonical transformation. A triple $(s, s', s'')$ corresponds to a two link billiard orbit if and only if $\d_{s'} (h(s, s') + h(s', s'')) = 0$, which is equivalent to the equality of angles of incidence and reflection at $x(s')$.
	\\
	\\
	We now discuss sums of generating functions, which correspond to compositions of symplectic maps. As in the introduction, we will write $x(S)$ to denote a configuration of $q$ points in $\R^2$ with $S = (s_1, \cdots, s_q) \in \T_\ell^q$ and $x_i(S)$ being the $(x_1^i, x_2^i)$ coordinates, $i \in \Z/q\Z$. The \textit{length functional} is defined in arclength coordinates by
	\begin{align}
		\LL(S) = \sum_{i = 1}^q |x_{i+1}(S) - x_i(S)|,
        \label{lengthfunctional}
	\end{align}
	with the convention that $s_{q+1} = s_1$. As above, a $q$-tuple $S \in \T_\ell^q$ corresponds to a $q-1$ link billiard orbit if and only if $\d_{s_2, \cdots, s_{q-1}} (h(s_1, s_2) + \cdots + h(s_{q-1}, s_q)) = \vec{0}$. Since periodic boundary conditions are built into $\LL$ by definition, critical points of $\LL$ correspond precisely to periodic orbits of $B$. It is also clear that the corresponding critical values constitute the length spectrum. We will denote by $\gamma$ the concatenation of links in $\R^2$ which constitute a period $q$ billiard orbit. Similarly, we denote by $\d \gamma = (x_1, \cdots, x_q)$ the set of impact points on the boundary and let $S_\gamma \in \T_\ell^q$ be the corresponding arclength coordinates.

	\subsection{The Poincar\'e map}
	
	Let $(s_1 ,s_2) \in \T_\ell^2$ and consider the corresponding infinite billiard orbit $(x(s_1), x(s_2), x(s_3), \cdots)$, which uniquely defines a sequence $s_3, s_4, \cdots$ of arclength coordinates. For each $q$, define a function $P^q$ which associates to an initial condition $(s_1, s_2) \in \T_\ell^2$ the corresponding pair $(s_{q+1}, s_{q+2})$. If the impact points $x(s_i)$ form a $q$ periodic orbit, then $(s_1, s_2)$ is a fixed point of $P^q$. $P^q$ is called the \textit{Poincar\'e map}. When we consider periodic orbits of a fixed period $q$, we will denote by $P_\gamma$ the $2\times 2$ symmetric matrix obtained by linearizing $P^q$ at any pair of reflection coordinates in $S_\gamma$. Although this matrix is only determined up to conjugacy (by cyclic permutation of initial conditions), it's trace and eigenvalues are defined invariantly.
	
	\begin{def1}
		We say that an orbit $\gamma$ is
		\begin{enumerate}\renewcommand{\labelenumi}{{\arabic{enumi}.}}
			\item \textbf{elliptic} if the eigenvalues of $P_\gamma$ are of the form $e^{\pm i \alpha}$ for some $\alpha \in (0, \pi]$,
			\item \textbf{hyperbolic} if the eigenvalues of $P_\gamma$ are of the form $e^{\pm \alpha}$ for some $\alpha > 0$, and
			\item \textbf{nondegenerate} if $\det (I - P_\gamma) \neq 0$.
		\end{enumerate}
		The numbers $\alpha$ called \textbf{Floquet angles}.
	\end{def1}
	
	For our purposes, it will be more convenient to work with the length functional, which is related to the linearized Poincar\'e map by the following
	\begin{prop}[\cite{KozTresh89}] \label{DetPoincare}
		\begin{align}
			\det (\Id - P_\gamma) = (-1)^{q+1} \det \partial^2 \mathcal L (S_\gamma) \prod_{i = 1}^{q} b_i^{-1},
			\label{eq35}
		\end{align}
		where
		\begin{align*}
			b_i = \frac{\partial^2 |x(s_{i}) - x(s_{i+1})|}{\partial s_i \partial s_{i+1}} = \frac{\cos \vartheta_i \cos \vartheta_{i+1}}{|x(s_i) - x(s_{i+1})|},
			\label{eq36}
		\end{align*}
		and $\partial^2 \mathcal{L}$ is the Hessian of the length functional. 
	\end{prop}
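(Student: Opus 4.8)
The plan is to relate the linearized Poincaré map to the Hessian of the length functional by exploiting the variational structure of billiards, in which the one–step generating function $h(s,s')=|x(s)-x(s')|$ plays the role of a ``mixed generating function'' for the symplectic billiard map $B$. First I would recall the standard fact that if a twist map $B:(s,\sigma)\mapsto(s',\sigma')$ has generating function $h$, i.e.\ $\sigma=-\partial_s h(s,s')$ and $\sigma'=\partial_{s'}h(s,s')$ as in \eqref{exact}, then its linearization is
\begin{align*}
DB(s,s') = \begin{pmatrix} -\dfrac{h_{ss}}{h_{ss'}} & -\dfrac{1}{h_{ss'}}\\[2ex] h_{s's}-\dfrac{h_{ss}h_{s's'}}{h_{ss'}} & -\dfrac{h_{s's'}}{h_{ss'}} \end{pmatrix},
\end{align*}
where subscripts denote partial derivatives of $h$ evaluated at the relevant pair; note $\det DB(s,s') = h_{s's}/h_{ss'} = 1$ since $h$ is symmetric, so each such factor is symplectic, and $b_i = h_{s_i s_{i+1}}$ in the notation of the statement. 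The linearized Poincaré map $P_\gamma$ is the ordered product $DB_q\cdots DB_1$ of these matrices along the orbit, so the claim is an identity expressing $\det(\Id - DB_q\cdots DB_1)$ in terms of the Hessian $\partial^2\mathcal L(S_\gamma)$ and the product $\prod b_i$.

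The key algebraic step is to recognize $\partial^2\mathcal L(S_\gamma)$ as a cyclic (periodic) Jacobi-type matrix: since $\mathcal L(S)=\sum_i h(s_i,s_{i+1})$, its Hessian is tridiagonal up to the corner entries coming from the identification $s_{q+1}=s_1$, with diagonal entries $h_{s's'}(s_{i-1},s_i)+h_{ss}(s_i,s_{i+1})$ and off-diagonal entries $h_{ss'}(s_i,s_{i+1})=b_i$. I would then use the general linear-algebra lemma — which one proves either by a direct transfer-matrix expansion of a cyclic tridiagonal determinant, or by the standard symplectic argument of Bolotin–Treschev / Mather type — that for a product $M=\prod_i A_i$ of $2\times 2$ matrices $A_i$ of the above ``generating'' form, one has $\det(\Id-M) = (-1)^{q+1}\big(\prod_i b_i^{-1}\big)\det T$, where $T$ is precisely the cyclic tridiagonal matrix with entries as above, i.e.\ $T=\partial^2\mathcal L(S_\gamma)$. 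Concretely, expanding $\det T$ along the transfer matrices $B_i$ produces $\operatorname{tr}(DB_q\cdots DB_1) - 2$ up to the factor $\prod b_i$ and a sign, and since $\det P_\gamma = 1$ we have $\det(\Id - P_\gamma) = 2 - \operatorname{tr} P_\gamma$, which matches. Finally I would compute $b_i = \partial^2_{s_i s_{i+1}}|x(s_i)-x(s_{i+1})|$ explicitly: differentiating $|x(s_i)-x(s_{i+1})|$ once in $s_i$ gives $-\cos\vartheta_i$ by \eqref{exact}, and differentiating that in $s_{i+1}$, using that the unit chord direction rotates and that $\dot x(s_{i+1})$ is a unit tangent making angle $\vartheta_{i+1}$ with the chord, yields $b_i = \cos\vartheta_i\cos\vartheta_{i+1}/|x(s_i)-x(s_{i+1})|$, which is the stated formula.

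The main obstacle is the bookkeeping in the cyclic-determinant lemma: correctly tracking the corner entries of $\partial^2\mathcal L$ arising from $s_{q+1}=s_1$, and getting the sign $(-1)^{q+1}$ and the placement of the $b_i$ factors exactly right when converting between the tridiagonal-determinant expansion and the trace of the product of $2\times 2$ symplectic matrices. One clean way to organize this is to pass to the symplectic matrices $B_i = \begin{pmatrix} 0 & 1 \\ -1 & 0\end{pmatrix}(DB_i)^{\!\top}\!\big/\dots$ — more precisely to note that each $DB_i$ is conjugate to the companion-type matrix $\begin{pmatrix} c_i & -1 \\ 1 & 0 \end{pmatrix}$ after rescaling by $b_i$, so that $\prod DB_i$ has the same trace as $\big(\prod b_i^{-1}\big)$ times a product of companion matrices, and the latter trace is exactly the determinant of the associated cyclic tridiagonal matrix by the classical three-term-recurrence identity. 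Since this is the content of Proposition \ref{DetPoincare} as cited from \cite{KozTresh89}, I would present the transfer-matrix computation in enough detail to fix all signs and then quote the reference for the remaining routine verification.
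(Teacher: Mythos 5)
The paper does not prove this proposition at all---it is quoted directly from \cite{KozTresh89}---so there is no in-text argument to compare against; your transfer-matrix derivation (linearize $B$ via the generating function $h$, identify $\partial^2\mathcal L(S_\gamma)$ as the cyclic Jacobi matrix with off-diagonal entries $b_i$, and use the classical continuant/monodromy identity $\det(\Id-P_\gamma)=2-\operatorname{tr}P_\gamma$ versus the periodic tridiagonal determinant) is precisely the standard proof found in that reference, and it is correct in outline, with only the sign bookkeeping you already flag left to carry out.
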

	The Hessian $\d^2 \LL$ is a $q\times q$ tridiagonal symmetric matrix. The tridiagonal structure of $\d^2 \LL$ comes from the observation that $\mathcal{L}$ is a sum of generating functions, each of which depends only on pairs of successive reflection points. The off diagonal entries are given by $b_i$ in Proposition \ref{DetPoincare} and diagonal entries by
	\begin{equation}
		a_i = \partial_{s_i}^2 \mathcal{L} = 2 \cos \vartheta_i \left(\frac{\cos \vartheta_i}{|x(s_i) - x(s_{i-1})|}+ \frac{\cos \vartheta_i}{ |x(s_i) - x(s_{i+1})|} - 2\kappa_i\right),
		\label{eq37}
	\end{equation}
	$\ka_i$ being the curvature at $x(s_i) \in \d \gamma$ (see \cite{KKV}).
	\\
	\\
	In particular, if an orbit is elliptic, the quantity in \eqref{eq35} is positive and $\det^2 \mathcal L$ shares its sign with $(-1)^{q+1}$. Hence, it has an odd number of positive eigenvalues and no zero eigenvalues. Conversely, having an even number of positive eigenvalues with no zero eigenvalues means that the orbit is hyperbolic. If zero is an eigenvalue, then we call both the Hessian and the orbit degenerate. There are two levels of degeneracy; when zero is a simple eigenvalue of the Hessian, we say that the orbit has a \textbf{type 1} degeneracy. This is equivalent to saying that $P_\gamma$ has an eigenvalue of $1$, but it is not the identity. We call an orbit \textbf{type $2$} degenerate if $\d^2 \LL$ has rank $q-2$ and zero is an eigenvalue of multiplicity $2$. In this case, the linearized Poincar\'e map $P_\gamma$ is \textit{equal} to the identity. This type appears, for example, in \cite{Callis22}. The rank cannot drop below $q-2$, since the Hessian has positive off diagonal entries. Type $1$ degeneracy is much more common and was the subject of focus in \cite{KKV}. As we shall see later, most periodic orbits in ellipses have this type, allowing us to take advantage of the formulae developed there.
	
	\subsection{Length spectra}
	We now detail some additional symplectic aspects of the billiard map. The \textbf{rotation number} of a $q$-periodic orbit is defined to be $\omega = \frac{p}{q}$, where $p$ is the winding number of $\gamma$; there exists a unique lift $\wt{B}$ of the map $B$ to the closure of the universal cover $\R \times [-1,1]$ of $B^*(\d \Omega)$ which satisfies
	\begin{itemize}
		\item $\wt{B}$ is smooth on $B^*(\d \Omega)$ and extends continuously up to the boundary $\R \times \{-1, 1\}$.
		\item $\wt{B}(s + \ell, \sigma) = \wt{B}(s, \sigma) + (\ell,0)$.
		\item $\wt{B}(s,1) = (s,1)$.
	\end{itemize}
	For $(s,\sigma) \in \R/\ell \Z \times [-1,1]$ belonging to a $q$ periodic orbit, we see that $\wt{B}^q(s,\sigma) = (s + p \ell, \sigma)$ for some $p \in \Z$, which we call the winding number of the orbit generated by $(s, \sigma)$.
\\
\\
	Recall that $h(s,s') = |x(s) - x(s')|$ is an $\ell$ periodic generating function for $B$. The billiard map satisfies the so called \textbf{monotone twist condition}:
	$$
	\frac{\d^2 h}{\d s \d s'} > 0,
	$$
	and the \textbf{twist interval} has endpoints
	\begin{align*}
		0 =& \frac{\pi_1 B(s, 1) - s}{\ell},\\
		1 =& \frac{\pi_1 (B(s, -1) -s}{\ell}.
	\end{align*}
	The twist condition implies that vertical fibers are twisted clockwise upon iteration of the map $B$:
	\begin{align*}
		\frac{\partial}{\partial \sigma} s'(s, \sigma) < 0.
	\end{align*}
	A classical theorem of Birkhoff guarantees that for each rational $0< p/q \leq 1/2$, there exist at least two geometrically distinct periodic orbits of rotation number $p/q$. Their time reversals have rotation number $1/2 \leq (q-p)/q  <1$.
	\\
	\\
	A closed curve $\Gamma$ in $\Omega$ is called a \textbf{caustic} if the tangency of any link in a billiard orbit implies tangency of all past and future links. These curves can be projected onto $B^*(\d \Omega)$, where they form invariant curves $\Lambda_\Gamma$ for the billiard map. If all trajectories tangent to $\Gamma$ are periodic, then $\Gamma$ is called a rational caustic. It turns out that all orbits tangent to a rational caustic share a common rotation number $\omega$ and length $L$. An adaptation of the KAM theorem to billiards by Lazutkin (\cite{Lazutkin}, \cite{Lazutkin5}) shows that there exists a large family of caustics whose rotation numbers form a Cantor set of positive measure.
	\begin{def1}\label{mbf}
		\textbf{Mather's $\beta$-function} is the function
		\begin{align*}
			\beta(\omega) = \lim_{N \to \infty} \frac{1}{2N} \sum_{i = -N}^{N-1} h(s_i, s_{i+1}),
		\end{align*}
		for any maximal orbit $(s_i)_{i \in \Z}$. The function $q \beta(p/q)$ returns the maximal length of all periodic orbits with given $p$ and $q$.
	\end{def1}
    Note that we follow the unusual convention of taking our generating function to be positive. It is known that $\beta$ is
	\begin{itemize}
		\item strictly concave (and hence continuous).
		\item symmetric about $\omega = 1/2$.
		\item $3$ times differentiable at the boundary, with $\beta'(0) = \ell$.
		\item differentiable on $[0,1]\backslash \Q$.
		\item differentiable at a rational $p/q$ if and only if there is an invariant curve in $B^*(\d \Omega)$ such that all billiard orbits which are tangent to it have rotation number $\omega = p/q$.
		\item differentiable at rotation numbers $\omega = p/q < 1/2$ for which there exists a corresponding rational caustic $\Gamma_\omega$ and
		\begin{align*}
				\beta'(p/q) = |\Gamma_\omega|.
			\end{align*}
	\end{itemize}
	
	The Cantor set mentioned above turns out to be robust enough that differentiability of $\beta$ at these rotation numbers gives rise to a full Taylor expansion at zero (\cite{Carminati}):
    \begin{align*}
        \beta(\omega) \sim \sum_{k = 1}^\infty \beta_{k} \omega^k.
    \end{align*}
    The coefficients $\beta_k = \beta^{(k)}(0)/k!$ are called Mather's $\beta$ invariants. See \cite{Sorr15} for more on the relationship between differentiability of $\beta$ and integrability of the dynamics, together with a calculation of the first $5$ nontrivial invariants. 
	\\
    \\
    \begin{figure}
	\centering
		\includegraphics[width=12cm]{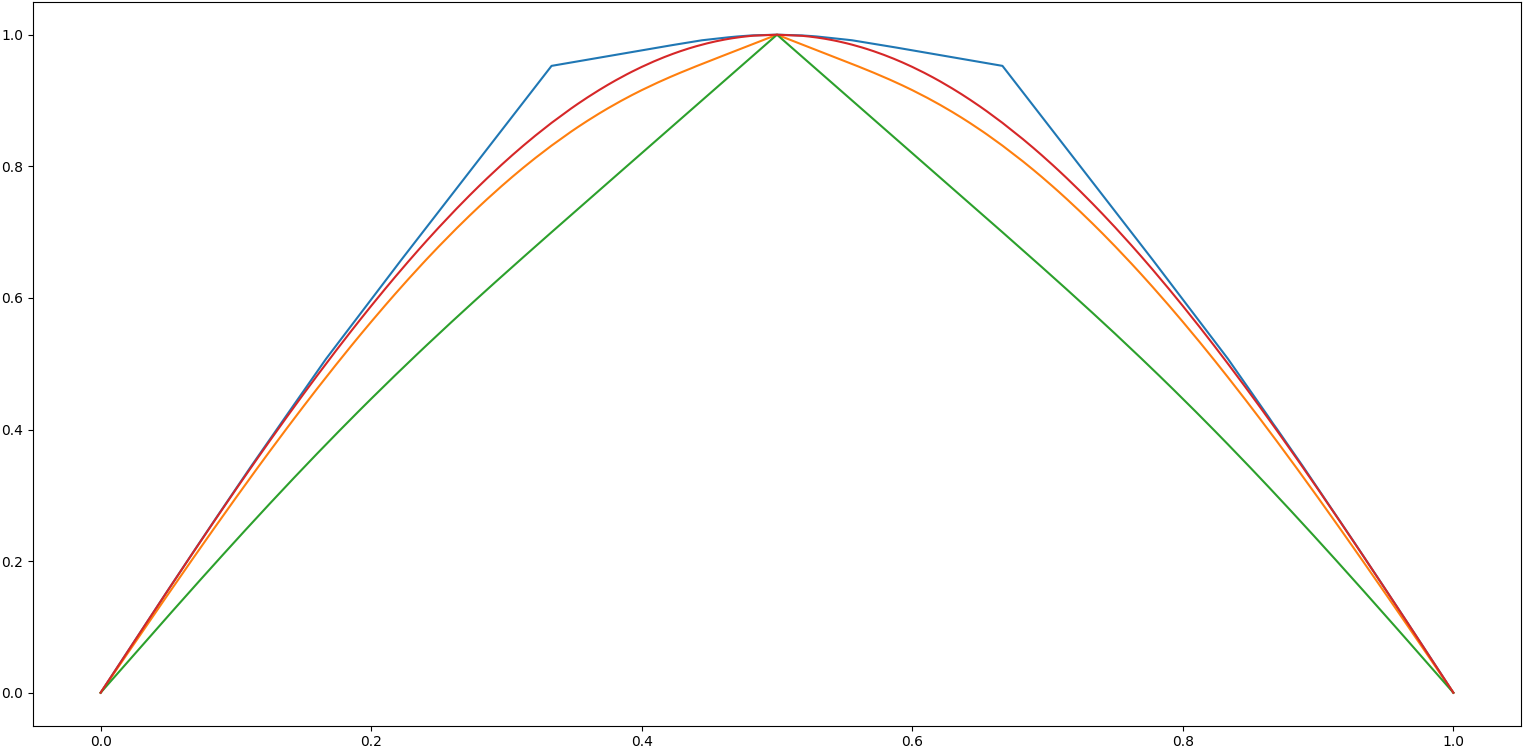}
			\caption{Mather's $\beta$-function for several convex domains. The red curve corresponds to a disc, the orange to an ellipse of eccentricity $e = 0.4$, the green to an ellipse with eccentricity $e = 0.9$, and the blue to a curve of constant width.}
		\label{fig:enter-label15}
	\end{figure}
	The \textbf{length spectrum} of $\Omega$ is the collection of all lengths of periodic billiard orbits together with integer multiples of the boundary length (corresponding to gliding orbits). For each $p$, the length spectrum has a finer filtration
	\begin{align*}
		\lsp(\Omega) = \bigcup_{\substack{p,q \in \N,\\ p/q \leq 1/2 } } \lsp_{p, q}(\Omega),
	\end{align*}
	where, denoting by $rt(\gamma) = (p, q)$ to be the winding number and the period of a periodic orbit,
	\begin{align*}
		\lsp_{p, q} (\Omega) = \bigcup_{\gamma\,\text{periodic} } \{ \text{length} (\gamma): rt(\gamma) = (p, q)\}.
	\end{align*}
	Marvizi and Melrose prove in \cite{MaMe82} that for each $p \geq 1$, the length spectrum breaks up into little clusters $[t_{p,q}, T_{p,q}]$, where
	\begin{align*}
		t_{p,q} = &\inf_{rt(\gamma) = (p, q)} \len(\gamma),\\
		T_{p,q} =& \sup_{rt(\gamma) = (p, q)} \len(\gamma),
	\end{align*}
	and $T_{p,q} - t_{p,q} = O_p(q^{-\infty})$. For a residual set of boundaries, the length spectrum is simple. For an ellipse, $t_{p,q} = T_{p,q}$ for all $0 < p/q < 1/2$, corresponding to the one parameter families of orbits tangent to confocal conic sections. In fact, this is true whenever there is a rational caustic of rotation number $p/q$. Marvizi and Melrose go further to show that
	\begin{align}
		T_{p,q} \sim p \ell + \sum_{k = 1}^\infty c_{p,k} q^{-2k},
        \label{eq38}
	\end{align}
	for a sequence of marked length spectral invariants $c_{p,k}$. I follows immediately that the Taylor coefficients of Mather's $\beta$ function are given by
    \begin{align*}
        \beta^{2k +1}(0) = & \frac{(2k+1)!}{p^{2k+1}} c_{p,k},\\
        \beta_{2k} = & 0.
    \end{align*}
    Generically, the wave trace is singular at $\{t_{p,q}, T_{p,q}\}$ (the noncoincidence condition in \cite{MaMe82}) and hence the $c_{p,k}$'s also constitute Laplace spectral invariants (\cite{MaMe82}). To ensure we have precise control over which orbits end up contributing to the wave trace in our main theorem, we need the following proposition, which states that orbits of a fixed perimeter cannot have arbitrarily large winding number.
	
		\begin{prop}[\cite{kov21}.]
		Given any ellipse $\Omega_0 = \Ee$ and $L > 0$, there exists a $p_0>0$, such that for all $C^\infty$ small deformations $\Omega$ and for every $p \ge p_0$ and $p/q \le 1/2$, the minimal length $t_{p, q}$ is strictly greater than $L$.
	\end{prop}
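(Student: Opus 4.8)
The plan is to show that every type-$(p,q)$ orbit has length growing linearly in the winding number $p$, with a slope that is uniform on a $C^\infty$-neighborhood of $\Ee$, and then choose $p_0$ accordingly. Concretely, I would reduce the proposition to the following assertion: there exist $c_0 > 0$ and a $C^\infty$-neighborhood $\mathcal U$ of $\Ee$ such that for every $\Omega \in \mathcal U$ and every periodic billiard orbit $\gamma$ in $\Omega$ with winding number $p$ and rotation number $p/q \le 1/2$, one has $\len(\gamma) \ge c_0\, p$. Granting this, it suffices to take $p_0 := \lceil L/c_0 \rceil + 1$: then for $p \ge p_0$ every such $\gamma$ satisfies $\len(\gamma) \ge c_0 p_0 > L$, hence $t_{p,q} > L$ (the inequality being vacuous when there are no type-$(p,q)$ orbits in $\Omega$).

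The linear lower bound rests on two uniform geometric facts. First, a chord--arc inequality: since all $\Omega \in \mathcal U$ are uniformly $C^2$ and uniformly strictly convex, there is $c_1 > 0$ with $|x(s) - x(s')| \ge c_1\, d_{\d\Omega}(x(s),x(s'))$ for all $s,s' \in \T_\ell$, where $d_{\d\Omega}$ denotes the shorter-arclength distance along $\d\Omega$; this follows by compactness once one notes that the ratio $|x(s)-x(s')|/d_{\d\Omega}$ tends to $1$ at the diagonal and is continuous and positive off it. Second, an advance bound: writing $\tilde s_1 < \dots < \tilde s_{q+1} = \tilde s_1 + p\ell$ for the lifted impact parameters of $\gamma$, the advances $a_i := \tilde s_{i+1} - \tilde s_i \in (0,\ell)$ satisfy $\sum_{i=1}^q a_i = p\ell$, and I claim there is $\delta_0 > 0$, uniform over $\mathcal U$, with $a_i \le (1-\delta_0)\ell$ for all $i$. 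Advances close to $\ell$ occur only for phase points with $\sigma_i$ close to $-1$, i.e.\ near-grazing links. By Lazutkin's caustic theorem (or KAM applied to the integrable ellipse with a fixed Diophantine rotation number in $(1/2,3/4)$), each $\Omega \in \mathcal U$ carries a billiard-invariant curve $\Lambda^*$ of some rotation number $\rho^* \in (1/2,1)$; being a homotopically nontrivial invariant curve it is a graph over $\T_\ell$, and since $\rho^* < 1$ it lies in $\{\sigma \ge -1+\delta_0\}$ with $\delta_0$ uniform. An orbit of rotation number $\le 1/2 < \rho^*$ cannot cross $\Lambda^*$ and must lie on the side containing $\{\sigma = 1\}$ (orbits on the other side have rotation number $\ge \rho^*$), so all its phase points satisfy $\sigma_i \ge -1+\delta_0$; compactness of $(s,\sigma)\mapsto \pi_1\wt B(s,\sigma) - s$ on $\{\sigma \ge -1+\delta_0\}$ then yields $a_i \le (1-\delta_0')\ell$ uniformly.

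Combining the two facts: split $\{1,\dots,q\}$ into $S = \{i : a_i \le \ell/2\}$ and $B = \{i : a_i > \ell/2\}$. For $i \in S$ the shorter arc between $x(s_i)$ and $x(s_{i+1})$ equals $a_i$, so the $i$-th link has length $\ge c_1 a_i$; for $i \in B$ the shorter arc is $\ell - a_i \ge \delta_0\ell$, so that link has length $\ge c_1\delta_0\ell$. Therefore, using $a_i < \ell$ on $B$,
\begin{align*}
\len(\gamma)\ \ge\ c_1\max\!\Big(\sum_{i\in S} a_i,\ \delta_0\ell\,|B|\Big)\ \ge\ c_1\max\!\big(p\ell - \ell|B|,\ \delta_0\ell\,|B|\big).
\end{align*}
The last maximum, as a function of $|B| \ge 0$, is at least its value at the crossing point $|B| = p/(1+\delta_0)$, namely $\tfrac{\delta_0}{1+\delta_0}\,p\ell$. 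Hence $\len(\gamma) \ge \tfrac{c_1\delta_0\ell}{1+\delta_0}\,p =: c_0\, p$, with $c_0 > 0$ uniform over $\mathcal U$ since $c_1$, $\delta_0$, and $\ell = |\d\Omega|$ are all bounded below there. This proves the reduced assertion and hence the proposition.

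I expect the advance bound $a_i \le (1-\delta_0)\ell$ to be the crux. Without it, a type-$(p,q)$ orbit could carry many near-grazing links, each geometrically tiny yet consuming almost a full turn of boundary arclength, so that the ``visible'' perimeter $\sum_{i\in S} a_i$ collapses and no length lower bound survives. Ruling this out amounts to confining orbits of rotation number $\le 1/2$ away from the grazing circle $\{\sigma = -1\}$, uniformly over deformations of $\Ee$; this is where persistence of caustics (Lazutkin/KAM) enters and is the only nonelementary input. Everything else---the chord--arc estimate and the elementary optimization over $|B|$---is uniform over a $C^\infty$-neighborhood of $\Ee$ precisely because such domains are uniformly smooth and uniformly convex.
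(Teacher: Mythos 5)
Your argument is essentially correct, and the comparison here is necessarily one-sided: the paper does not prove this proposition at all --- it is imported verbatim from \cite{kov21} --- so what you have written is a self-contained substitute rather than a variant of an in-paper proof. Your reduction to the linear bound $\len(\gamma)\ge c_0\,p$ via the uniform chord--arc estimate and the elementary optimization over the set $B$ of links with advance $>\ell/2$ is sound (the max at $|B|=p/(1+\delta_0)$ computation checks out), and you correctly identify the crux: excluding links whose advance is within $\delta_0\ell$ of a full turn, i.e.\ phase points with $\sigma$ near $-1$. Your barrier argument does work, but note two points of care. First, the confinement step rests on the standard ordering lemma for monotone twist maps (an orbit lying entirely in the component of the complement of a rotational invariant graph containing $\{\sigma=-1\}$ has rotation number $\ge\rho^*$); this is true and provable by a two-line induction comparing the lifted orbit with the graph dynamics using $\partial s'/\partial\sigma<0$, but it is the one ingredient you assert without proof or reference, and it should be cited (e.g.\ Birkhoff/Aubry--Mather theory as in \cite{SiburgPrincipleofleastaction}) or spelled out. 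Second, Lazutkin's theorem as usually stated produces caustics with rotation numbers accumulating only at $0$ and $1$, and its smallness threshold is domain-dependent, so for uniformity over a fixed $C^\infty$-neighborhood the right formulation is your parenthetical one: Moser persistence of a single invariant curve of the integrable ellipse with Diophantine rotation number $\rho^*\in(1/2,1)$, chosen where the (analytic, nonconstant) rotation-number function has nonvanishing derivative so the twist nondegeneracy holds; the persisted curve is $C^0$-close to the unperturbed one, hence uniformly bounded away from $\sigma=-1$, and KAM only needs finitely many derivatives, so a $C^\infty$-small neighborhood suffices. With those two clarifications (plus the cosmetic fix that $\delta_0$ and $\delta_0'$ are conflated at the end), your proof stands as a legitimate alternative to citing \cite{kov21}.
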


	\subsection{Elliptical billiards}

	Ellipses have special dynamical properties and are conjectured by Birkhoff to be the only (even locally) integrable billiard tables; local integrability here means there is a some neighborhood of phase space which is foliated by caustics. In proving Theorem \ref{mainth}, we need to understand how the billiard map behaves under deformation of an ellipse. Let
	\begin{align*}
		\mathcal{E}_e = \{(x,y) \in \R^2: x^2 + \frac{y^2}{1 - e^2} = 1\}
	\end{align*}
	denote an ellipse which is normalized to have semimajor axis length $1$ and eccentricity $e$. The phase space for the billiard map is the cylinder $\mathcal{E}_e \times [-1, 1].$ It turns out that all confocal conic sections are caustics for the billiard map on the ellipse. Projecting them onto invariant curves in $B^*(\mathcal{E}_e)$, we can divide the phase space into two regions (see Figure \ref{phase space}). Near the boundary, we have graphs of functions in the $x$ variable which correspond to confocal ellipses; we call these elliptic caustics, even though the corresponding orbits are degenerate. Further from the boundary are billiard orbits which pass between the focal points and are tangent to confocal hyperbolae; we call these hyperbolic caustics. The corresponding invariant circles in phase space (the ``eyes'') are homotopic to a point (the ``pupil'') on the cylinder. The separatrix between elliptic and hyperbolic invariant curves corresponds to homoclinic orbits which pass through the focal points. For the orbits in the ``eyes'', the rotation number, as we defined it before, is always equal to $1/2$. To obtain a more meaningful quantity, we introduce the following:
    \begin{def1}
        Let $\gamma$ be an orbit which is tangent to confocal hyperbolae in the ellipse $\mathcal{E}_e$ with rotation number $1/2$. We define the \textbf{libration number} of $\gamma$ to be its rotation number when considered as a diffeomorphism of its corresponding invariant circle in phase space.
    \end{def1}
    As mentioned before, all orbits in $\mathcal{E}_e$ are degenerate with the exception of the two bouncing ball orbits along the axes of symmetry. The period two orbit along the major axis consists of hyperbolic points on the separatrix, while the bouncing ball along the minor axis is elliptic (the ``pupils'' of the ``eyes'').
	\begin{figure}
		\includegraphics[scale =0.4]{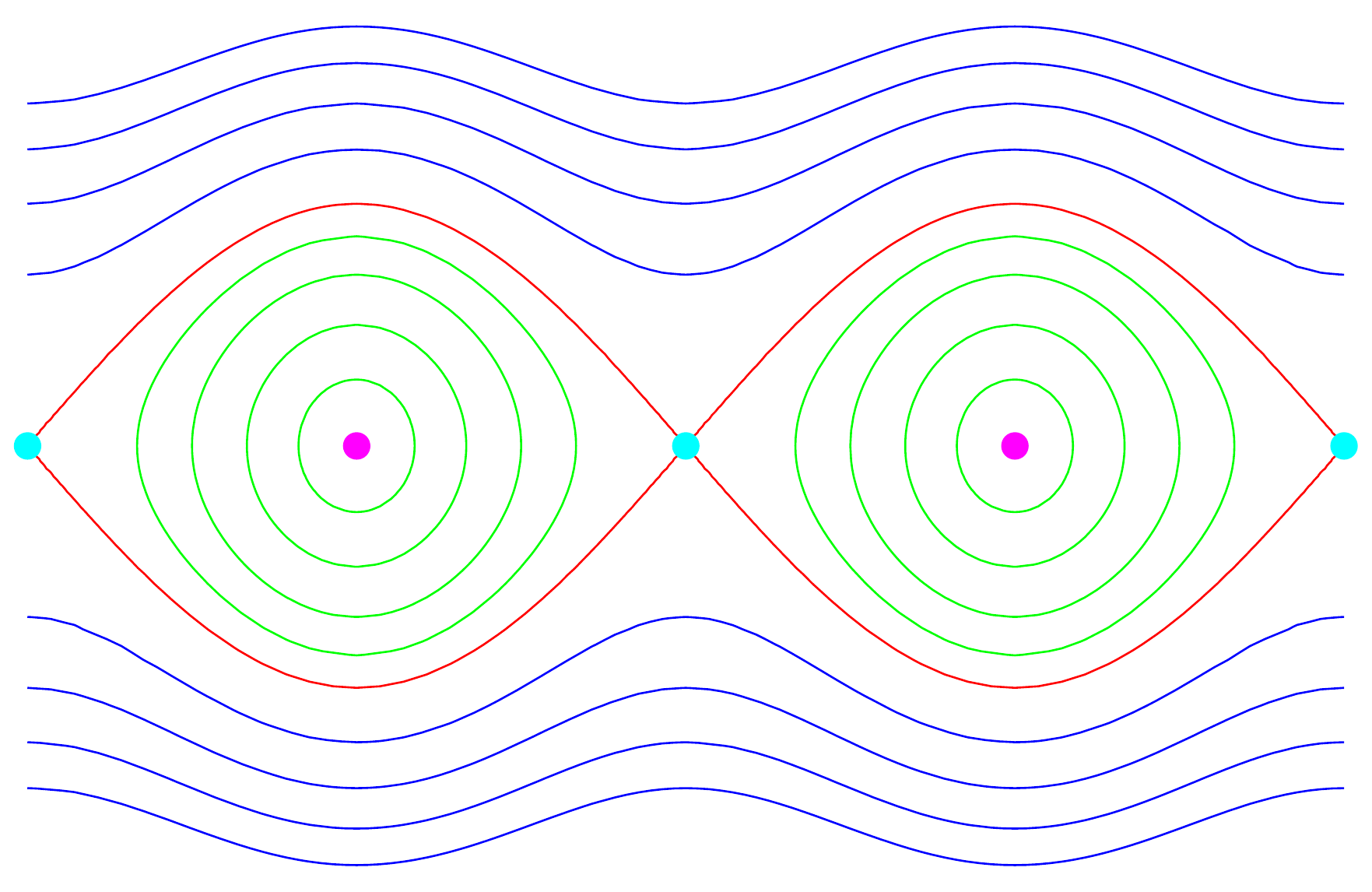}
		\caption{Phase space for billiards on the ellipse. The horizontal axis corresponds to the arclength coordinate along $\d \Omega$ while the vertical axis corresponds to the dual fiber variable $\sigma = \cos \phi$. The phase space is symmetric about the midline, corresponding to time reversal of orbits. The blue curves come from ``rotations'' tangent to confocal ellipses. The green curves are ``librations,'' corresponding to orbits which pass between the focal points and are tangent to confocal hyperbolae. These two families are separated by the red separatrix, with the three cyan colored dots corresponding to orbits which pass through the focal points, including the bouncing ball along the semimajor axis. The magenta pupils at the center of the ``eyes'' correspond to the period $2$ bouncing ball orbit along the semiminor axis.}
		\label{phase space}
	\end{figure}
	Elliptical polar coordinates $(\phi, \mu)$ on $\R^2$ turn out to be convenient for describing the billiard map. They are given by the coordinate transformation
	\begin{align*}
		x = -e \cosh \mu \sin \phi, \qquad y = e\sinh \mu \cos \phi,
	\end{align*}
	where $\{(\pm \pi/2, 0)\}$ are focal points for all corresponding coordinate level sets $\mu = \text{const.}$ (confocal ellipses) and $\phi = \text{const.}$ (confocal hyperbolae). In this case, $\mathcal{E}_e$ is parametrized by
	$$
	(-\sin \phi, \sqrt{1 - e^2} \cos \phi).
	$$
	Confocal conic sections are given by
	\begin{align}\label{confocal}
		\mathcal{E}_{e, \lambda} = \{(x,y) \in \R^2: \frac{x^2}{1 - \lambda^2} + \frac{y^2}{1 - e^2 - \lambda^2} = 1\}.
	\end{align}
	For $0 \leq \lambda^2< 1 - e^2$, the corresponding curves are confocal ellipses while for $1 - e^2 < \lambda^2  < 1$, they are confocal hyperbolae. We also define the \textbf{incomplete elliptic integrals} of the first and second type respectively by
	\begin{align*}
		F(\varphi, k) = \int_{0}^{\varphi} \frac{d \psi}{\sqrt{1 - k^2\sin^2 \psi}}, \; \; \; E(\varphi, k) = \int_{0}^{\varphi}\sqrt{1 - k^2\sin^2 \psi} d\psi,
	\end{align*}
	for $-1 < k < 1$. Their complete versions are defined by
	\begin{align*}
		K(k) = F(\pi/2, k), \; \; \; E(k) = E(\pi/2, k).
	\end{align*}
	The inverse of an incomplete elliptic integral of the first kind is called the \textbf{Jacobi amplitude} $\am(\eta, k)$:
	\begin{align*}
		F(\am(\eta, k), k) = \eta.
	\end{align*}
	For a confocal conic section $\mathcal{E}_{e, \lambda}$, we set the parameter $k$ in the elliptic integrals above to be the eccentricity of the caustic:
	\begin{align*}
		k =& \frac{e}{\sqrt{1 - \lambda^2}}.
	\end{align*}

    Particularly, $0 < k < 1$ for elliptic conics and $k > 1$ for hyperbolic ones.
	
	\begin{def1}\label{AAC}
		Local action angle coordinates for the ellipse $\mathcal E_e$, on an invariant curve of an elliptic caustic $\mathcal E_{e, \lambda}$ are given by $(\eta, I)$, where the angle variable is defined implicitly in terms of the elliptic coordinate $\phi$ by
		\begin{align*}
	       \phi = \am \left(\frac{2K(k)}{\pi} \eta, k \right),   
		\end{align*}
        The action variable $I$ is defined to be the symplectic dual of $\phi$ and depends on the parameter $\lambda$ corresponding to a confocal conic section in \eqref{confocal}, to which a given orbit will be tangent. In these coordinates, the billiard map is given by
		\begin{align*}
			B(\eta, I ) = (\eta + \omega(I), I).
		\end{align*}
        We denote by $\Psi_{\text{ell}}$ the corresponding coordinate transformation, so that $\phi = \pi_1 \Psi_{\text{ell}}(\eta, I)$, with $\pi_1$ being the projection onto the first coordinate.
	\end{def1}
 
 \begin{def1}\label{AAChyp}
		Local action angle coordinates for the ellipse $\mathcal E_e$, on an invariant curve of a hyperbolic caustic $\mathcal E_{e, \lambda}$ are given by $(\eta, \tilde{\omega}(I))$, where the angle variable is again defined implicitly in terms of the elliptic coordinate $\phi$ by
		\begin{align*}
			\phi = \am \left(\frac{2\tilde{K}(k)}{\pi} \eta , k\right) \mod \pi \Z,
		\end{align*}
        where $\wt{K}\left(k\right) = F\left(\arcsin k^{-1}, k\right)$. We again define $I$ to be the symplectic dual coordinate. In these coordinates, the billiard map is given by
		\begin{align*}
			B(\eta, I ) = (\eta + \tilde{\omega}(I), I),
		\end{align*}
		where $\tilde{\omega}$ is the libration number. We continue to denote by $\Psi_{\text{hyp.}}$ the corresponding coordinate transformation, so that $\phi = \pi_1 \Psi_{\text{hyp.}}(\eta, I)$, with $\pi_1$ being the projection onto the first coordinate.
	\end{def1}
    Whenever it is clear from context, we will abbreviate the coordinate transformations $\Psi_{\text{ell.}/\text{hyp.}}$ to $\Psi$.
    
    \begin{rema}
        The reason for computing residues modulo $\pi \Z$ as opposed to $2\pi \Z$ is the presence of two ``eyes'' in figure \ref{phase space}, with $\eta$ defined separately in each. The billiard map $B$ maps one eye into the other. Note also that by an abuse of notation, we continue to write $F$ and $\am$ even when $k > 1$.
    \end{rema}
    
	We now parametrize the reflection points of a periodic orbit associated to an elliptic (resp. hyperbolic) caustic of eccentricity $k$. For an elliptic caustic, the corresponding elliptic coordinates of the reflection points are given by
	\begin{equation}
		\varphi(s_i) = \am \left(4K(k)\left(\frac{\eta_0}{2\pi} + (i-1)\omega  \right), k \right),
		\label{eq315}
	\end{equation}
	where $\eta_0$ parametrizes the starting point. For a hyperbolic caustic, the situation is more complicated. We have:
	\begin{align}
		\varphi(s_i) = \pi i + \am \left(4\tilde{K}\left(k\right)\left(\frac{\eta_0}{2\pi} + (i-1)\tilde{\omega}  \right), k\right),
		\label{eq316}
	\end{align} 
	where $\tilde{\omega}$ is the libration number defined above. Note that $\arcsin k^{-1}$ is the supremum of arguments of $F$ which give a convergent integral. Usage of the Jacobi amplitude $\am$ also needs to be justified. When $k > 1$, one can see that $\am\left(\eta, k\right) = \varphi$ is well defined when $\eta \in [-\tilde{K}(k), \tilde{K}(k)]$ and $\varphi \in [-\arcsin k^{-1}, \arcsin k^{-1} ]$. This allows us to analytically continue the Jacobi amplitude by
	\begin{align*}
		\am\left(\eta + 2\tilde{K}\left(k\right), k\right) = -\am\left(\eta, k\right), \qquad \eta \in \mathbb{R}.
	\end{align*} 
	We see that in this case, $\varphi$ is restricted to $\left[-\arcsin k^{-1}, \arcsin k^{-1}\right]$, together with the shift of this interval by $\pi$. This is to be expected, as all the reflections of the billiard trajectory occur on one side of the hyperbolic caustic.
	\\
	\\
	There are other caustic parameters which are related to $k$ and are connected to billiard dynamics inside the ellipse. Particularly, we have an amplitude which we denote by $\phi_0$, defined by
	\begin{align*}
		\phi_0 =& \arcsin\left(\frac{\lambda}{\sqrt{1 - e^2}}\right),\\
		\varphi_0 =& \arcsin \left(\frac{\sqrt{1-e^2}}{\lambda}\right),
	\end{align*}
    for elliptic and hyperbolic caustics respectively.
    \\
    \\
	Remarkably, the rotation number of an elliptic caustic or the libration number of a  hyperbolic one can be expressed in terms of these parameters:
	\begin{align*}
		\omega = \frac{F(\varphi_0, k)}{2K(k)}, \; \; \; \tilde{\omega} =  \frac{F(\varphi_0, k^{-1})}{2K(k^{-1})}.
	\end{align*}
	While elliptic caustics with rotation numbers from $0$ to $1/2$ exist for every ellipse, we note that hyperbolic caustics of a given libration number $\tilde{\omega}$ only exist when $e > \cos \pi \tilde{\omega}$. The libration number is strictly monotone in $\lambda$; on the phase portrait shown in Figure \ref{phase space}, the inner curves have larger $\lambda$ and smaller $\tilde{\omega}$.
	\\
	\\
	One is also able to derive formulae for the lengths of periodic orbits in an ellipse. The vertical and horizontal bouncing ball orbits have lengths which are simply four times the lengths of the semi-major and semi-minor axes. According to \cite{sieber}, the lengths of orbits having period greater than two can be expressed in terms of elliptic integrals. For the ellipse $\mathcal{E}_e$, orbits tangent to an elliptic caustic $\mathcal{E}_{e, \lambda}$ have lengths given by
	\begin{equation}\label{lengths of hyperbolic orbits}
		L_{p, q} = 2q \sin \varphi_0 - \frac{2eq}{k}E(\varphi_0, k) + \frac{4ep}{k} \mathcal{E}(k).
	\end{equation}
	In particular, this gives an expansion of Mather's beta function near $\omega = 0$ and also shows directly that it is smooth, corroborating the remarks made earlier about integrability. Between the focal points, orbits which are tangent to a hyperbolic caustic have lengths given by
	\begin{equation}\label{lengths of elliptic orbits}
		L_{\tilde{p}, q} = 2q\sin \varphi_0 - 2eqE(\varphi_0, k^{-1}) + 4e\tilde{p} \mathcal{E}(k^{-1}).
	\end{equation}
	
	\subsection{Balian-Bloch-Zelditch invariants for nearly-elliptical domains} \label{BBZSubsection}
	In \cite{KKV}, the authors derived a formula for the regularized resolvent trace with coefficients called Balian-Bloch-Zelditch (BBZ) wave invariants, which are algebraically equivalent to the full quantum Birkhoff normal form (wave trace invariants). The main theorem in \cite{KKV} applies to nearly degenerate periodic orbits and gives leading order asymptotics (in the deformation parameter) of the BBZ invariants. We now specialize to the case of $\Omega$ being a deformation of an ellipse. Degenerate orbits which are tangent to a confocal conic section become nearly degenerate after the deformation. As a preliminary step, we recall some notation from \cite{KKV} and discuss complex phases associated to the Hessian of the length functional.

	\begin{prop}
		Let $\Omega_0 = \mathcal{E}_e$ be an ellipse and $\gamma$ a $q$-periodic orbit which is tangent to an elliptic caustic. Then the Hessian $\partial^2 \mathcal{L}(S_\gamma)$ has zero as a simple eigenvalue together with $q-1$ strictly negative ones. Moreover, all entries of the zero eigenvector share a common sign.
		\label{prop32}
	\end{prop}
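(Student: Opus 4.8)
The plan is to compute the Hessian $\partial^2\mathcal L(S_\gamma)$ explicitly using the tridiagonal formulas \eqref{eq36}–\eqref{eq37} together with the integrability of the ellipse, and then read off the spectral information from structural properties of the resulting matrix. Since $\gamma$ is tangent to an elliptic caustic $\mathcal E_{e,\lambda}$, it belongs to a one-parameter family of periodic orbits of the same rotation number $\omega=p/q$ and the same length $L$; this family is exactly the level set of the action variable $I$ in Definition \ref{AAC}. Differentiating the length functional along this family shows that $\partial^2\mathcal L(S_\gamma)$ has a nontrivial kernel, i.e.\ $0$ is an eigenvalue. The kernel direction is the tangent to the family, which in arclength coordinates is the vector $v=(\dot s_1,\dots,\dot s_q)$ obtained by moving the starting parameter $\eta_0$ in \eqref{eq315}; because the map $\eta_0\mapsto\varphi(s_i)$ is an orientation-preserving circle diffeomorphism (a shift by $(i-1)\omega$ composed with the monotone Jacobi amplitude) and arclength is a monotone function of $\varphi$, every component $\dot s_i$ has the same sign. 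This already gives the last assertion about the zero eigenvector, modulo checking nondegeneracy of the family.

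Next I would show $0$ is a \emph{simple} eigenvalue and that the remaining $q-1$ eigenvalues are strictly negative. For simplicity of the zero eigenvalue: the rank of $\partial^2\mathcal L$ cannot drop below $q-2$ (stated in the excerpt, since the off-diagonal entries $b_i=\tfrac{\cos\vartheta_i\cos\vartheta_{i+1}}{|x(s_i)-x(s_{i+1})|}$ are strictly positive, so the matrix is an irreducible symmetric tridiagonal matrix and its eigenspaces are one-dimensional). Thus we only need to rule out the type-2 case in which $P_\gamma=\Id$; but for an elliptic caustic $\mathcal E_{e,\lambda}$ with $\lambda\neq 0$ the family of tangent orbits is a genuine one-parameter family with $\partial\omega/\partial I\neq 0$ (the rotation function is strictly monotone in the action, equivalently the caustic is not the degenerate "whispering gallery" limit), so $P_\gamma$ has $1$ as an eigenvalue but is not the identity — this is a type-1 degeneracy. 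For the signature, I would invoke the relation \eqref{eq35} of Proposition \ref{DetPoincare} together with the sign analysis following it: orbits tangent to a confocal ellipse are, in the integrable picture, limits of elliptic orbits, and the Poincaré map $P_\gamma$ has eigenvalues on the unit circle (indeed equal to $1$ in the degenerate case but approached from the elliptic side). Combining $\det(\Id-P_\gamma)=0$ with the fact that a small perturbation makes the orbit elliptic — so that $(-1)^{q+1}\det\partial^2\mathcal L$ is positive after perturbation, forcing an odd number of positive eigenvalues, which for the unperturbed matrix with a simple zero eigenvalue means that number is $0$ — yields that all $q-1$ nonzero eigenvalues are negative. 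Alternatively, and perhaps more cleanly, one computes the diagonal entries $a_i$ in \eqref{eq37}: using the mirror equation for caustics, $\tfrac{\cos\vartheta_i}{|x(s_i)-x(s_{i-1})|}+\tfrac{\cos\vartheta_i}{|x(s_i)-x(s_{i+1})|}-2\kappa_i = -\,\tfrac{\text{(something)}}{\cos\vartheta_i}\le 0$, so $a_i\le 0$, and then Gershgorin together with the explicit kernel vector shows the Hessian is negative semidefinite with one-dimensional kernel.

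The main obstacle I anticipate is the negative-\emph{semi}definiteness, i.e.\ proving the nonzero spectrum is strictly negative rather than merely nonpositive with the correct count. The Gershgorin argument only gives nonpositivity of the spectrum on the nose, and one must argue strict negativity away from the kernel; the cleanest route is the perturbation/homotopy argument via Proposition \ref{DetPoincare} connecting the degenerate orbit to nearby elliptic ones, using that $\det\partial^2\mathcal L$ changes sign in a controlled way (passing through $0$ exactly once, with a simple eigenvalue crossing) as one deforms the caustic parameter, so the Morse index of $\partial^2\mathcal L$ on the orthogonal complement of the kernel is forced to be $q-1$. A secondary technical point is verifying that the common-sign statement for the kernel vector survives at all $q$ reflection points simultaneously and does not degenerate where $\cos\vartheta_i\to 0$; this is handled by the explicit parametrization \eqref{eq315} and the monotonicity of $\am(\cdot,k)$ on the relevant interval, ensuring $\dot\varphi(s_i)>0$ uniformly, hence $\dot s_i$ of one sign.
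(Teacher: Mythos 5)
Your identification of the kernel is fine and, if anything, more explicit than the paper's own proof: the zero eigenvector is the tangent to the one-parameter family of caustic orbits, and monotonicity of $\eta_0\mapsto s_i$ gives the common-sign claim. But two of your steps do not hold up. The parenthetical ``irreducible symmetric tridiagonal, hence one-dimensional eigenspaces'' is false in this setting: because the orbit is periodic, $\partial^2\mathcal{L}(S_\gamma)$ is \emph{cyclically} tridiagonal (it has corner entries coupling $s_1$ and $s_q$), and such matrices can have double eigenvalues --- the paper's type-$2$ degeneracy (rank $q-2$, $P_\gamma=\Id$, realized by the minor-axis bouncing ball) is exactly this phenomenon, so there genuinely is something to rule out. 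You do then reduce correctly to showing $P_\gamma\neq\Id$, but you justify it by asserting strict monotonicity of the rotation number in the action for confocal elliptic caustics, a true but nontrivial elliptic-integral fact that you neither prove nor cite; the paper instead argues softly that, by the twist condition, the iterates of $\partial_\sigma$ under $dB$ are rotated strictly to one side while being confined by the invariant curve through $\gamma$, so $dB^q$ cannot fix the vertical direction.

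The real gap is the count of $q-1$ strictly negative eigenvalues. Your first route is logically insufficient: under a small perturbation the nonzero eigenvalues keep their signs while the zero eigenvalue can move either way, so knowing the perturbed orbit is elliptic (odd number of positive eigenvalues via \eqref{eq35}) is compatible with \emph{any} number of positive eigenvalues of the unperturbed Hessian; moreover whether the perturbed orbit becomes elliptic or hyperbolic is a choice of deformation (the paper uses both), so nothing is ``forced.'' Your second route, as you concede, gives at best nonpositivity, and plain Gershgorin would need a diagonal dominance $|a_i|\ge b_{i-1}+b_i$ that the mirror equation does not supply; the fallback you offer (tracking a sign change of $\det\partial^2\mathcal{L}$ as the caustic parameter varies) is vacuous inside the ellipse, where the determinant vanishes identically along every caustic family. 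The paper closes this hole with a variational fact you never use: orbits tangent to the elliptic caustic of rotation number $p/q$ \emph{maximize} the length functional over $(p,q)$-configurations, so $S_\gamma$ is a local maximum of $\mathcal{L}$ and the Hessian has no positive eigenvalues; together with simplicity of the zero eigenvalue this yields exactly $q-1$ negative ones. Alternatively, your second route can be completed without maximality: since $b_i>0$ and the kernel vector $h$ is strictly positive, the substitution $v_i=h_iu_i$ gives $v^{T}\partial^2\mathcal{L}\,v=-\sum_{i=1}^{q}b_i h_i h_{i+1}(u_{i+1}-u_i)^2\le 0$, with equality only when $v\parallel h$ --- but that argument is not the one you wrote.
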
 
	
	\begin{proof}
		Denote the elliptic caustic by $\mathcal{E}_{e, \lambda}$. Since it is a caustic, all periodic orbits of type $(p, q)$ share the same length. These orbits realize the maximum of the length functional on $(p, q)$ configurations and hence $\d \gamma$ is a local maximum for $\mathcal{L}$. This implies that $\partial^2 \mathcal{L}(S_\gamma)$ cannot have any positive eigenvalues. We also know that $\gamma$ is degenerate, so the Hessian has nontrivial kernel, corresponding to rotation along a caustic. It remains to show that zero is a simple eigenvalue  or alternatively, that the Poincar\'e map $P_\gamma$ is not the identity. We claim that the iterates of $\frac{\partial}{\partial \sigma}$ under $dB^{i}$ have a negative $\frac{\partial}{\partial s}$ component ($\d s' /\d \sigma < 0$) and are thus twisted to the left. On the one hand, these iterates lie on one side of an invariant curve which contains $\gamma$, so the extent to which they can rotate counterclockwise is limited. On the other hand, the twist property prohibits them from rotating clockwise back to the vertical direction. Hence, the Poincar\'e map is not an identity which concludes the proof.
	\end{proof}
	
	Lastly, we introduce the adjugate matrix $M := \text{adj}(\d^2 \LL)$ of the Hessian in $\Omega_0$, which was featured in \cite{KKV}. Its entries are complementary minors of $\partial^2 \mathcal{L}(\gamma)$, evaluated at boundary points of an ellipse. As shown in \cite{KKV}, $M$ is a real symmetric matrix of rank $1$. Thus, the entries of $M$ are given by
	\begin{align*}
		M_{\alpha, \beta} = \pm h_\alpha h_\beta,
	\end{align*}
	for some $h \in \mathbb{R}^q$. As noted in \cite{KKV}, if $\partial^2 \mathcal{L}$ has a one dimensional kernel, then $h$ is an eigenvector along this direction. In particular, when $\gamma$ is an orbit tangent to an elliptic caustic, all of the $h_i$ can be chosen to be positive by Proposition \ref{prop32}. We can now state the main result of \cite{KKV} adapted to ellipses:
	
	\begin{theo}\label{partone}
		Let $\Omega_0 = \mathcal{E}_e$ be an ellipse and $\gamma$ be a billiard orbit of period $q$, tangent to an elliptic caustic. Assume that $\left\{\Omega_\eps\right\}$, $\eps \in [0, \eps_0]$ is a smooth family of deformations which fixes the reflection points of $\gamma$. Assume further that the deformation makes $\gamma$ non-degenerate for $\eps>0$, with $|\det \partial^2 \mathcal{L}| \sim c_\gamma \eps$ for some $c_\gamma > 0$. Then, for $\eps > 0$, the $j$-th Balian-Bloch invariant associated to $\gamma$ from \eqref{eq14} has the form
		\begin{align}
			\begin{split}
			    B_{j, \gamma} = &\frac{e^{-\frac{i\pi q}{4}}}{q} L_\gamma \left(\sum_{i_1, i_2, i_3 = 1}^{q}h_{i_1}h_{i_2} h_{i_3} \partial_{i_1,i_2,i_3}^3\mathcal{L}\right)^{2j} (c_{\gamma}\eps)^{-3j}\\
                &\times \prod_{l = 1}^{q} \frac{\cos \vartheta_l}{|x(s_l) - x(s_{l+1})|} \frac{(\pm i)^{j}}{w(j)} + \mathcal{R}_j,
			\end{split}
		\label{eq325}
		\end{align}
		where 
		\begin{itemize}
			\item $w(j) = \sum_{\mathcal{G}} \frac{1}{w(\mathcal{G})} > 0$ is a sum is over all $3$-regular graphs on $2j$ vertices, with $w(\mathcal{G})$ being the order of the automorphism group of a graph $\mathcal{G}$. In particular, it is independent of both the domain and the orbit. $w(0) = 1$.
			\item The signs $\pm$ are given by $+$ if in the deformed domain $\gamma$ becomes elliptic and $-$ if it becomes hyperbolic.
			\item $\mathcal R_j = \mathcal{R}(\gamma, \eps, j, \kappa, \kappa^{(1)}, \ldots, \kappa^{(2j)})$, with $\kappa^{(n)}$ being the $n$-th derivative of curvature of $\Omega$ at the reflection points, is a remainder which satisfies:
			\begin{itemize}
				\item $\eps^{3j-1}\mathcal{R}_j$ is smooth in all parameters down to $\eps = 0$. Particularly, it is bounded from above whenever the deformation parameters remain bounded and is locally uniformly continuous in them.
				\item $\mathcal R_0 = 0$.
			\end{itemize}
		\end{itemize}
		The prefactor $\mathcal D_\gamma$ is given by
		\begin{equation}
			\mathcal{D}_\gamma(k) =  \frac{e^{i k L_\gamma} e^{\frac{i\pi \sgn \d^2 \LL(S_\gamma)}{4}}}{\sqrt{|\det \d^2 \LL(S_\gamma)|}}.
		\end{equation}
	\end{theo}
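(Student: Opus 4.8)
The theorem is the specialization to ellipses of the general Balian--Bloch--Zelditch formula established in \cite{KKV}, so the plan is to reduce to that result and then read off its structural ingredients in terms of ellipse data, while recalling enough of the derivation for the reader who wants it from scratch. First I would verify that the hypotheses of the general theorem hold: by Proposition \ref{prop32} an orbit $\gamma$ tangent to an elliptic caustic has a Hessian $\d^2\LL(S_\gamma)$ with a \emph{simple} zero eigenvalue and $q-1$ strictly negative ones, i.e. it is type $1$ degenerate, and the assumptions that the deformation $\{\Omega_\eps\}$ fixes the reflection points of $\gamma$ and that $|\det\d^2\LL|\sim c_\gamma\eps$ place us exactly in the ``nearly degenerate'' regime of \cite{KKV}. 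The adjugate $M=\text{adj}(\d^2\LL(S_\gamma))$ is then rank one, $M_{\alpha\beta}=\pm h_\alpha h_\beta$, and by Proposition \ref{prop32} the entries $h_i$ may all be chosen positive; this pins down the vector $h$ occurring in \eqref{eq325}. The prefactor $\mathcal{D}_\gamma(k)=e^{ikL_\gamma}e^{i\pi\sgn\d^2\LL(S_\gamma)/4}/\sqrt{|\det\d^2\LL(S_\gamma)|}$ is nothing but the leading stationary--phase amplitude of the oscillatory integral at its critical point, so the only real content is the formula for the coefficients $B_{j,\gamma}$.

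To recall the skeleton from \cite{KKV}: one starts from the oscillatory integral representation $\int_0^\infty e^{ikt}\wh{\rho}(t)w(t)\,dt=\int_{\T_\ell^q}e^{ik\LL(S)}a_0(S;k)\,dS$ modulo $O(k^{-\infty})$, where the principal value of the amplitude at the critical point is $\prod_{l=1}^q \frac{\cos\vartheta_l}{|x(s_l)-x(s_{l+1})|}$ up to elementary factors, and $S_\gamma$ is, modulo the cyclic relabeling of the $q$ reflection points that accounts for the $1/q$, the only stationary point in $\supp\wh{\rho}$. After diagonalizing $\d^2\LL(S_\gamma)$ one splits the integration variable into a coordinate $t$ along the near--kernel direction (the span of $h$) and $q-1$ transverse coordinates in which the Hessian is nondegenerate uniformly in $\eps$; stationary phase with parameters in the transverse variables produces $\mathcal{D}_\gamma$, with $\sgn\d^2\LL(S_\gamma)=-(q-1)\pm1$ according to whether the resolved eigenvalue is positive ($\gamma$ becomes elliptic) or negative (hyperbolic), and it is this signature that regroups into $e^{-i\pi q/4}$ together with the $(\pm i)$--type phases below.

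The heart of the matter is the remaining one--dimensional integral $\int e^{ik(\frac12\mu(\eps)t^2+\frac16\Lambda_3 t^3+\cdots)}a(t;\eps)\,dt$, where $\mu(\eps)\sim\pm c_\gamma\eps$ is the resolved eigenvalue and $\Lambda_3=\sum_{i_1,i_2,i_3}h_{i_1}h_{i_2}h_{i_3}\d^3_{i_1,i_2,i_3}\LL$ is the third derivative of $\LL$ contracted in the kernel direction (up to normalization). Expanding the exponential and the amplitude and evaluating Gaussian moments against $e^{i\frac12 k\mu t^2}$, the coefficient of $k^{-j}$ becomes a finite sum of Feynman diagrams; each propagator carries a factor $\mu(\eps)^{-1}\sim(c_\gamma\eps)^{-1}$, a cubic vertex uses one propagator per three half--edges, and a vertex of higher valence uses fewer, so the terms with the maximal number $3j$ of propagators are exactly the vacuum diagrams whose vertices are all cubic --- i.e. the $3$--regular graphs on $2j$ vertices. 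Each such graph contributes $(\Lambda_3)^{2j}(c_\gamma\eps)^{-3j}$ weighted by the reciprocal of its automorphism number, the Gaussian signs combine into $(\pm i)^j$ ($+$ elliptic, $-$ hyperbolic), and the universal graph sum yields the constant written as $1/w(j)$ in the statement, with $w(0)=1$ corresponding to the empty graph. Equivalently, this step is a rescaling/normal--form argument: after a change of variable making the quadratic part of the phase $\eps$--independent, the cubic term is the leading perturbation and generates precisely this expansion.

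Everything else is pushed into $\mathcal{R}_j$: contributions using at least one vertex of valence $\ge 4$, or at least one Taylor correction of the amplitude, or cross terms between the degenerate and transverse blocks. Each of these uses at most $3j-1$ propagators and hence carries at most $(c_\gamma\eps)^{-(3j-1)}$; moreover a derivative $\d^n\LL$ at a reflection point depends only on the boundary curvature jet of order $n-2$ there, which bounds the curvature data entering $\mathcal{R}_j$ by $\kappa^{(2j)}$, and $\mathcal{R}_0=0$ because no cubic vertices exist at $j=0$. The genuinely delicate point --- and the step I expect to be the main obstacle --- is making all of this \emph{uniform as $\eps\to0$}: the naive stationary--phase expansion has coefficients blowing up like powers of $\eps^{-1}$, so one must carry out a combined asymptotic analysis in $k$ and $\eps$ (equivalently, a parametrized Morse normal form for the phase) that cleanly separates the $\eps^{-3j}$ main term from a remainder for which $\eps^{3j-1}\mathcal{R}_j$ is smooth and locally uniformly bounded down to $\eps=0$. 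This uniform control, rather than the algebra of the leading term, is where the substance of \cite{KKV} lies.
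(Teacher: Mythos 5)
Your proposal follows the same route as the paper: Theorem \ref{partone} is not proved here but imported from \cite{KKV}, and the only content supplied in this paper is exactly what you identify — verifying via Proposition \ref{prop32} that an orbit tangent to an elliptic caustic is type $1$ degenerate with $q-1$ negative eigenvalues (so the ``nearly degenerate'' hypotheses of \cite{KKV} apply under the stated deformation), and pinning down $h$ through the rank-one adjugate $M=\text{adj}(\d^2\LL)$ with positive entries. Your stationary-phase/graph-expansion sketch is a faithful reconstruction of the mechanism behind \cite{KKV} (and correctly flags the uniformity in $\eps$ as the hard part there), though note that such a heuristic cannot by itself fix normalizations like whether the $3$-regular graph sum enters as $w(j)$ or $1/w(j)$ — for that one must simply quote \cite{KKV}, as the paper does.
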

	
	Theorem \ref{partone} gives a formula for the BBZ wave trace invariants associated to a single periodic orbit $\gamma$ of length $L$, traversed in the positive direction. It is essentially an expansion of the regularized resolvent trace (\cite{Zel09}). For a general convex domain, if $L$ is isolated in the length spectrum and all orbits of length $L$ are nondegenerate, the resolvent trace is a sum of contributions of all such orbits. In fact, each such orbit is invariant under both cyclic permutation of the reflection points as well as time reversal, so the trace invariants must be summed over all orbits \textit{together} with their symmetries. As it does not appear explicitly elsewhere in the literature for billiards, we now show that all symmetries of a periodic orbit generate the same wave trace invariants.

	\begin{theo}
        Let $\Omega$ be any smooth, strictly convex, planar domain and denote by $\gamma$ a nondegenerate $q$-periodic orbit which has no orthogonal angles of reflection. For any $g \in D_{2q}$, the dihedral group on $q$ vertices, define $g \gamma$ to be the $q$-periodic orbit obtained by applying $g$ to the points of reflection $\d \gamma$. Then $B_{j, g \gamma} = B_{j, \gamma}$ for all $j \in \N$.
	\end{theo}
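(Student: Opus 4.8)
The plan is to reduce everything to the structure of the BBZ formula in Theorem \ref{partone} and observe that each ingredient is intrinsic to the geometric orbit $\gamma$ in $\R^2$, not to the labeling or orientation of its reflection points. The dihedral group $D_{2q}$ is generated by the cyclic shift $c:(s_1,\ldots,s_q)\mapsto(s_2,\ldots,s_q,s_1)$ and the reflection $r:(s_1,\ldots,s_q)\mapsto(s_q,s_{q-1},\ldots,s_1)$ (time reversal). It therefore suffices to prove $B_{j,c\gamma}=B_{j,\gamma}$ and $B_{j,r\gamma}=B_{j,\gamma}$ separately, and then compose. First I would recall that all the quantities appearing on the right-hand side of \eqref{eq325} — the length $L_\gamma$, the period $q$, the product $\prod_l \frac{\cos\vartheta_l}{|x(s_l)-x(s_{l+1})|}$, the Hessian $\partial^2\mathcal L(S_\gamma)$ and its adjugate $M$, the cubic form $\sum h_{i_1}h_{i_2}h_{i_3}\partial^3_{i_1i_2i_3}\mathcal L$, the constant $c_\gamma$ governing $|\det\partial^2\mathcal L|\sim c_\gamma\varepsilon$, and the remainder $\mathcal R_j$ (a function of the curvature jets at the reflection points) — are each either manifestly symmetric functions of the unordered set of reflection data, or transform by a permutation/sign that one must track. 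The combinatorial factor $w(j)$ and the sign $\pm$ (elliptic versus hyperbolic) are orbit invariants by construction, since ellipticity/hyperbolicity is defined through the conjugacy-invariant eigenvalues of $P_\gamma$.

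The cyclic case is the easy one: applying $c$ conjugates the tridiagonal Hessian $\partial^2\mathcal L(S_\gamma)$ by the cyclic permutation matrix $\Pi_c$, hence $\det$ and $|\det|\sim c_\gamma\varepsilon$ are unchanged, the signature $\sgn\partial^2\mathcal L$ is unchanged, and the eigenvector $h$ is transported to $\Pi_c h$; the fully contracted cubic form $\sum h_{i_1}h_{i_2}h_{i_3}\partial^3\mathcal L$ is invariant because it is a complete contraction of a symmetric tensor against the (transported) eigenvector, and the product over $l$ of $\cos\vartheta_l/|x(s_l)-x(s_{l+1})|$ is a cyclic product, hence invariant. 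The curvature jets at the reflection points are the same set of numbers reindexed cyclically, so $\mathcal R_j$ — which by the statement of Theorem \ref{partone} depends on $\gamma$, $\varepsilon$, $j$ and $\kappa,\ldots,\kappa^{(2j)}$ — is unchanged. So $B_{j,c\gamma}=B_{j,\gamma}$.

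For the reflection $r$ (time reversal), the same bookkeeping applies but one must check two slightly more delicate points. First, conjugation of $\partial^2\mathcal L$ by the anti-diagonal permutation matrix again preserves $\det$, $|\det|$, signature, and the contracted cubic form (complete contraction against the transported eigenvector), and the product $\prod_l \cos\vartheta_l/|x(s_l)-x(s_{l+1})|$ is again the same unordered product of link-data, so it is preserved; the curvature jets are again merely reindexed. Second — and this is the one genuine subtlety — one must confirm that the prefactor $e^{-i\pi q/4}/q$ and the power $(\pm i)^j$ do not pick up a conjugation or sign under time reversal; this is where the hypothesis that $\gamma$ has no orthogonal angles of reflection is used, ensuring we stay away from the degenerate locus where the stationary-phase analysis of \cite{KKV} and the Maslov/signature bookkeeping could jump. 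Concretely, $P_{r\gamma}$ is conjugate (by the same relabeling, up to the standard symplectic-transpose relation for the reversed orbit) to $P_\gamma$, so its eigenvalues, hence the elliptic/hyperbolic sign and the Floquet data, are unchanged, and the signature of the Hessian is unchanged, so the complex phase $e^{i\pi\sgn\partial^2\mathcal L/4}$ and the factor $(\pm i)^j/w(j)$ are identical for $\gamma$ and $r\gamma$. I expect the main obstacle to be precisely this last point: verifying that the microlocal parametrix construction in \cite{KKV} that produced \eqref{eq325} is genuinely symmetric under the swap of ``incoming'' and ``outgoing'' boundary data, i.e. that no orientation-dependent branch of a square root or of the Maslov index was implicitly fixed. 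Once that symmetry of the construction is in hand, the theorem follows by composing the cyclic and reflection cases, since every generator of $D_{2q}$ fixes $B_{j,\cdot}$.
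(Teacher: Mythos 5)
There is a genuine gap. Your argument is pitched at the level of the closed formula \eqref{eq325}, but that formula is not the definition of $B_{j,\gamma}$ and is not even available in the generality of the statement: Theorem \ref{partone} is proved only for deformations of an ellipse with nearly degenerate orbits, whereas the symmetry claim is for an arbitrary smooth strictly convex domain and an arbitrary nondegenerate orbit. Even in the perturbed-ellipse setting, \eqref{eq325} contains the unspecified remainder $\mathcal R_j$, which is only described as some function of $(\gamma,\eps,j,\kappa,\ldots,\kappa^{(2j)})$; you cannot read off from the statement that it is a symmetric function of the reindexed reflection data, and asserting so begs the question. You in fact flag the decisive issue yourself (whether the parametrix construction in \cite{KKV} is symmetric under relabeling and time reversal, with no orientation-dependent branch or Maslov convention hidden in it) and then defer it --- but settling that point \emph{is} the proof. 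The paper does it by returning to the object that actually defines the invariants: the oscillatory integral representation of Corollary 4.12 in \cite{KKV} (displayed as \eqref{expansion}), in which $\mathcal D_\gamma \sum_j B_{j,\gamma}\lambda^{-j}$ arises from stationary phase at the critical point $S_\gamma$ of $\LL$ on $\d\Omega^M$. Since the phase $\LL$ and the explicit symbol $a_0^\sigma$ are invariant under $s_i\mapsto s_{i+j}$ and $s_i\mapsto s_{M-i}$, the change of variables by the corresponding permutation (Jacobian of modulus one, unoriented measure $dS$) identifies the stationary phase expansion at $S_{g\gamma}$ with that at $S_\gamma$, giving $B_{j,g\gamma}=B_{j,\gamma}$ term by term, remainder and phase conventions included, in full generality.

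A secondary point: your reading of the hypothesis that $\gamma$ has no orthogonal reflections is off. It is not there to avoid jumps of the Maslov/signature data under time reversal; as the remark following the theorem explains, an orthogonal reflection makes the incident and reflected rays coincide, so the orbit retraces itself, the critical configuration drops dimension, and the effective symmetry group is smaller than $D_{2q}$ --- i.e. the hypothesis guarantees that the dihedral action produces genuinely equivalent, nondegenerate critical configurations to which the change-of-variables argument applies.
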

	\begin{proof}
		Corollary 4.12 in \cite{KKV} shows that
		\begin{align}\label{expansion}
			\mathcal{D}_{\gamma}(\lambda)   \sum_{j = 0}^\infty B_{j, \gamma} \lambda^{-j} \sim^* \sum_{\substack{1 \leq M < \infty\\ \sigma: \Z_M \to \{0,1\}}} \frac{e^{ -M \pi i / 4}}{M} \int_{{\d \Omega}_S^{M}}  e^{i \lambda \LL(S)} a_0^\sigma(\lambda, S) dS,
		\end{align}
		where $a_0^\sigma \in S_\varrho^{\frac{M}{2}}(\d \Omega^M)$ is a semiclassical symbol which is compactly supported, locally near $\LL^{-1}(L) \subset \d \Omega^M$. The notation $\sim^*$ indicates, as in \cite{KKV}, that the above is not a true asymptotic expansion but rather, provides an effective algorithm for computing the invariants $B_{\gamma,j}$. The critical points of the phase function occur precisely at the arclength coordinates of $M$-fold billiard orbits of length $L$, including their time reversals and cyclic permutations. The time reversal of an orbit $(x_1, x_2, \cdots, x_q)$ is simply $(x_q, x_{q-1}, \cdots, x_1)$. The corresponding arclength coordinates are similarly reversed, which corresponds to a change of variables in which one simply permutes the coordinates. The symbol $a_0^\sigma$ is given in \cite{KKV} by
		\begin{align*}
			a_0^\sigma (k +i \tau, S) = \hat{\rho}(\LL(S)) \left(\LL(S)  A_\sigma (k + i \tau, S) - \frac{1}{i}  \frac{\d}{\d k} A_\sigma (k + i \tau, S)  \right),
		\end{align*}
		which belongs to the class $ S_\varrho^{\frac{M}{2}}(\d \Omega^M)$, where $A_\sigma ( k + i \tau, S)$ is
		\begin{align*}
			( k + i \tau)^{M/2}  \prod_{i = 1}^{M}  \chi_i(k + i \tau,S) \frac{\cos \theta_i}{ |x(s_i) - x(s_{i+1})| ^{1/2}}  a_1(( k + i \tau) |x(s_i) - x(s_{i+1})|).
		\end{align*}
		It is clear that both $\LL$ and $a_0^\sigma$ are invariant under the permutations $s_i \mapsto s_{i + j}$ and $s_i \mapsto s_{M-i}$. When expanding \ref{expansion} via the method of stationary phase, the Jacobian has determinant of magnitude one and the measure $dS$ is unoriented, so permutation of the coordinates leaves the integral unchanged.
	\end{proof}
	In light of this, we multiply the leading order term of Theorem \ref{partone} by $2q$ when summing over all geometrically distinct orbits.

    \begin{rema}
        If an orbit has orthogonal reflections at some point, the incident and reflected rays coincide, which reduces the dimension of the configuration. In this case, the symmetry group is also reduced.
    \end{rema}

    We also remark that the third derivative component in \eqref{eq325} is zero for ellipses, since there is a caustic.
	\begin{lemm}
		If $\gamma$ is an orbit around an elliptic caustic in $\Omega_0 = \Ee$, we have
		\begin{equation}
			\d_{deg}^3 \LL: = \sum_{i_1, i_2, i_3 = 1}^{q}h_{i_1}h_{i_2} h_{i_3} \partial_{i_1,i_2,i_3}^3\mathcal{L} = 0.
			\label{eq327}
		\end{equation}
		\label{lema31}
	\end{lemm}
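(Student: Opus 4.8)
The plan is to exploit the fact that the vector $h$ is (up to positive rescaling) the zero eigenvector of $\partial^2\mathcal L(S_\gamma)$, together with the one-parameter family of periodic orbits tangent to the same elliptic caustic $\mathcal E_{e,\lambda}$. Concretely: since $\gamma$ is tangent to a rational caustic, every nearby configuration obtained by sliding the orbit along the caustic is again a critical point of $\mathcal L$ of the same length $L$. Parametrize this family by the starting angle $\eta_0$ via \eqref{eq315}, giving a curve $\eta_0 \mapsto S(\eta_0)$ in $\T_\ell^q$ with $S(\eta_0^\gamma)=S_\gamma$, along which $\nabla\mathcal L(S(\eta_0)) \equiv 0$ and $\mathcal L(S(\eta_0)) \equiv L$. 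The tangent vector $v := \frac{d}{d\eta_0}S(\eta_0)\big|_{\eta_0^\gamma}$ therefore lies in $\ker \partial^2\mathcal L(S_\gamma)$, which by Proposition \ref{prop32} is one-dimensional and spanned by $h$; hence $v = c\,h$ for some scalar $c \neq 0$ (the fact that $v$ is a genuine nonzero tangent vector follows because distinct $\eta_0$ give geometrically distinct orbits).

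Next I would differentiate the identity $\nabla\mathcal L(S(\eta_0)) = 0$ twice in $\eta_0$. The first derivative gives $\partial^2\mathcal L(S_\gamma)\, v = 0$, which we already knew. Differentiating again at $\eta_0 = \eta_0^\gamma$ yields
\begin{align*}
	\sum_{i_2,i_3=1}^q \partial^3_{i_1,i_2,i_3}\mathcal L(S_\gamma)\, v_{i_2} v_{i_3} + \sum_{i_2=1}^q \partial^2_{i_1,i_2}\mathcal L(S_\gamma)\, \ddot S_{i_2} = 0
\end{align*}
for each $i_1$, where $\ddot S = \frac{d^2}{d\eta_0^2}S(\eta_0)\big|_{\eta_0^\gamma}$. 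Now contract this vector identity with $h_{i_1}$ and sum over $i_1$. The first term becomes $c^{-2}\sum_{i_1} h_{i_1}\big(\sum_{i_2,i_3} h_{i_2} h_{i_3}\partial^3_{i_1,i_2,i_3}\mathcal L\big) = c^{-2}\,\partial^3_{deg}\mathcal L$ after replacing $v = ch$. The second term is $\sum_{i_1,i_2} h_{i_1}\big(\partial^2_{i_1,i_2}\mathcal L\big)\ddot S_{i_2}$; since $\partial^2\mathcal L$ is symmetric and $h$ spans its kernel, $\sum_{i_1} h_{i_1}\partial^2_{i_1,i_2}\mathcal L = 0$ for every $i_2$, so this term vanishes identically. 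Therefore $c^{-2}\,\partial^3_{deg}\mathcal L = 0$, and since $c \neq 0$ we conclude $\partial^3_{deg}\mathcal L = 0$, which is \eqref{eq327}.

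The main obstacle — really the only nontrivial point — is justifying rigorously that the family $\eta_0 \mapsto S(\eta_0)$ is smooth near $\eta_0^\gamma$ and that its velocity $v$ is nonzero, i.e. that one genuinely has a one-parameter smooth deformation of $S_\gamma$ through critical points of $\mathcal L$ of constant value $L$. This follows from the explicit formula \eqref{eq315}: the map $\eta_0 \mapsto (\varphi(s_1),\dots,\varphi(s_q))$ is real-analytic in $\eta_0$ (the Jacobi amplitude $\am$ is analytic on the relevant range), and the arclength coordinates $s_i$ depend smoothly on the elliptic coordinates $\varphi(s_i)$ since $\mathcal E_e$ is a smooth strictly convex curve. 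Nonvanishing of $v$ is immediate because $\varphi(s_1) = \am(4K(k)\eta_0/2\pi,\,k)$ has nonzero derivative in $\eta_0$. One should also note $\mathcal L(S(\eta_0))$ is constant because all orbits tangent to a fixed caustic have the same length $L$, which is recorded above (and used in Proposition \ref{prop32}); alternatively constancy of $\mathcal L$ along the family follows from $\nabla\mathcal L \equiv 0$ along it. An entirely equivalent and perhaps cleaner phrasing: $h$ is tangent to the invariant curve $\Lambda_\Gamma$ in the configuration space of critical points, and a third-directional derivative of a function along a direction in which its gradient vanishes to infinite order on a submanifold is forced to vanish — but the two-derivative computation above makes this self-contained without invoking the submanifold structure.
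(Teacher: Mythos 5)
Your argument is correct and rests on the same key idea as the paper's proof: the one-parameter family of orbits tangent to the caustic consists of critical points of $\mathcal L$ with common value $L$, and its tangent direction at $S_\gamma$ is proportional to $h$ because the kernel of $\partial^2\mathcal L(S_\gamma)$ is one-dimensional (Proposition \ref{prop32}); the paper implements this by Taylor-expanding $\mathcal L$ along the straight segment $S_\gamma+\tau h$, which stays $O(\tau^2)$-close to genuine orbit configurations so that $\mathcal L(S_\gamma+\tau h)=L+O(\tau^4)$ and the $\tau^3$ term is absent, whereas you differentiate $\nabla\mathcal L(S(\eta_0))=0$ twice along the actual critical curve and contract with $h$, killing the $\ddot S$ term via symmetry of the Hessian and its kernel --- the two computations are interchangeable. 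The only slip is cosmetic: substituting $v=c\,h$ produces a factor $c^{2}$, not $c^{-2}$, in front of $\partial_{deg}^3\mathcal L$, which does not affect the conclusion since $c\neq 0$.
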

	\begin{proof}
		The expression \eqref{eq327} is the third derivative of $\mathcal L$ along the degenerate direction $(h_1, h_2, \ldots, h_{q})$. Let $S(\tau) = (s_1 + \tau h_1, \ldots, s_q + \tau h_q)$ be a configuration along this subspace. Since $h$ is tangent to the curve of caustic orbit configurations, there exists an orbit $\gamma_\tau$ along the caustic with $S_{\gamma_\tau}$ being $O(\tau^2)$ close to $S(\tau)$. We know that $\mathcal L_{\Omega_0}(S_{\gamma_\tau}) = L$ and that it is a critical point. Hence,
		\begin{align*}
			\mathcal L_{\Omega_0}(S(\tau)) = L + \partial \mathcal L_{\Omega_0}(S_{\gamma_\tau}) \cdot O(\tau^2) + O(\tau^4) = L + O(\tau^4).
		\end{align*}
		In particular, there is no $\tau^3$ term, so the third derivative is $0$.
	\end{proof}
	
	\section{Conjugate Maslov indices}\label{Maslov}
	In this section, we find conditions on the Maslov indices of periodic orbits which facilitate cancellations in the wave trace. It is clear that the contribution of just one orbit to the BBZ expansion would be insufficient to make the wave trace smooth at any given length. Indeed, the Poisson relation is an \textit{equality} whenever the length spectrum is simple, which is a residual property (in the sense of Baire Category Theorem). Instead, multiple orbits of the same length are required to create a cancellation; complex phases arising from Maslov factors together with the corresponding magnitudes $|B_{\gamma,j}|$ of the BBZ wave invariants must align perfectly. Consider the possible phases of $\mathcal{D}_\gamma B_{j, \gamma}$ appearing in \eqref{eq15}. We will focus on the leading order term in the expansion of $B_{\gamma, j}$, since the error terms $\mathcal R_j$ are both extremely complicated and relatively small in the deformation parameter. Since the $B_{\gamma, j}$s are complex valued themselves, arranging that they sum to zero does not reduce to a one dimensional problem. The oscillatory part $e^{ikL_\gamma}$ in the symplectic prefactor depends on $k$, but appears in the contribution of every orbit, so it can factored out and ignored while keeping track of the phases of individual $B_{\gamma,j}$s.
	\\
	\\
	Without the oscillatory factor $e^{i k L_\gamma}$ and the error term $\mathcal{R}_j$, $\mathcal{D}_\gamma B_{j, \gamma}$ shares its phase with
	\begin{equation}
		e^{\frac{i\pi \sgn \d^2 \LL(S_\gamma)}{4}} e^{-\frac{i\pi q}{4}}(\pm i)^j.
		\label{eq41}
	\end{equation}
	In the unperturbed ellipse $\Omega_0 = \Ee$, if $\gamma$ is tangent to an elliptic caustic and becomes hyperbolic after deformation $\Omega_\eps$, then the signature of the Hessian changes to $-q$ and \eqref{eq41} reduces to
	\begin{align*}
		i^{-q-j}.
	\end{align*}
	Alternatively, if the perturbed orbit becomes elliptic, \eqref{eq41} becomes
	\begin{align*}
		i^{1-q+j}.
	\end{align*}
	In total, $4$ phases are possible: $1$, $i$, $-1$ and $-i$. If $\mathcal{D}_\gamma (B_{j, \gamma} - \mathcal{R}_j)$ share the same phase for all $j$, so would their sum and there could be no chance for cancellations. One can try to resolve this issue by having $2$ different phases (say $1$ and $-1$), that will counteract one another. However, the error terms may not be real valued. Hence, we need to choose orbits which realize all $4$ possible phases. 
	\\
	\\
	The leading order term of phase $+1$ will be approximately canceled by that of phase $-1$ and the leading order term of phase $+i$ will similarly be approximately canceled by that of phase $-i$. Clearly, each of these orbits will introduce additional error terms, but they are comparatively small and we already have orbits with both real and imaginary phases. The upshot is then that we obtain an open map from the parameter space of deformations to $\C$. We will show that the origin is close to its image by perturbing the domain and cancelling leading order terms. To account for the errors, we use a fixed point argument to show that the map from parameter space to $\C$ in fact covers a neighborhood of the origin.
		\begin{figure}
				\centering
				\includegraphics[width=9cm]{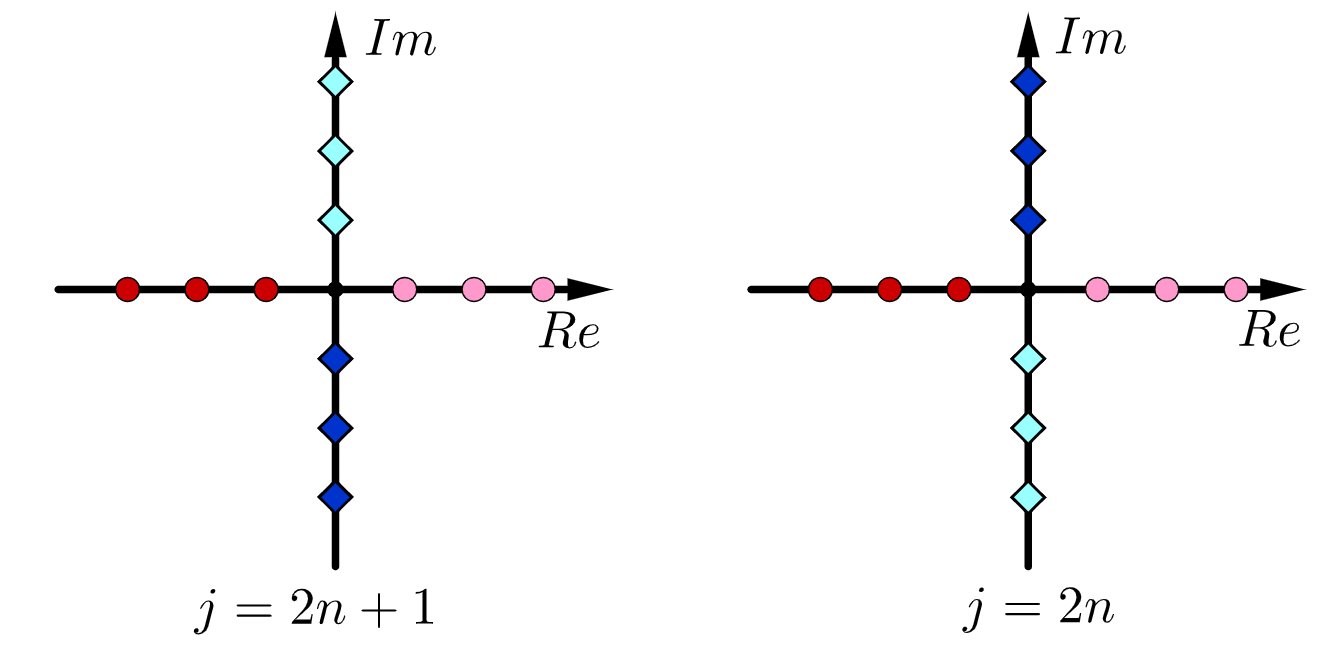}
				\caption{Relative positions of the main contributions on the complex plane for both parities of $j$. There are $4$ types of orbits represented. Lighter colored orbits have rotation number $p/q$, the darker ones -- $p'/q'$. Red and pink orbits (circles) become elliptic while blue and light blue ones (diamonds) become hyperbolic after the deformation.}
				\label{fig:fourtypes}
			\end{figure}
\\
\\
We now explain how each phase may be realized. Note that if all orbits share the same bounce number $q$, then it would be impossible to create coefficients with all $4$ phases; only $2$ would be possible. Thus, we need at least $2$ periods -- call them $q$ and $q'$. We consider $4$ families of orbits of varying period and ellipticity/hyperbolicity, as described in the introduction. If we make $q \equiv 0 \mod 4$ and $q' \equiv 2 \mod 4$, we get the following phases for four families:
	\begin{align*}
		\begin{cases}
			i^{1 + j} & \text{period }q, \,\gamma \text{ elliptic},\\
			i^{- j} & \text{period } q, \, \gamma \text{ hyperbolic},\\
			i^{-1 + j} & \text{period } q', \, \gamma \text{ elliptic},\\
			i^{2 - j} & \text{period }q', \, \gamma \text{ hyperbolic}.
		\end{cases}
	\end{align*}  
	For a fixed period, we refer to the signatures $\sgn \d^2 \LL(S_\gamma)$ of elliptic/hyperbolic orbits as \textbf{conjugate Maslov indices}. For any $j \ge 0$, these give all $4$ phases. However, not every ellipse has orbits of the same length but different rotation numbers; we will show that it is indeed possible to do so in the next section (see Theorem \ref{LengthCoincidences}).
    \\
    \\
    We briefly comment on the number of orbits in each family. Note that in the main term of \eqref{eq325}, the only orbit parameters which can be explicitly prescribed under a deformation are $c_\gamma \eps$ and 
	\begin{equation}
		\partial_{deg}^3 \mathcal L = \sum_{i_1, i_2, i_3 = 1}^{q}h_{i_1}h_{i_2} h_{i_3} \partial_{i_1,i_2,i_3}^3\mathcal{L}.
		\label{eq45}
	\end{equation} 
	Recall that the expression \eqref{eq45} is zero for $\Omega_0 = \Ee$ (cf. Lemma \ref{lema31}). As we will see later, \eqref{eq45} can be easily prescribed and deformed away from zero. All other parameters in \eqref{eq325} are independent of the deformation; for example $w(j)$ is a combinatorial constant and $\cos \vartheta_j$ depends only on the initial orbit. Hence, in contrast to the expansions in \cite{Zel09}, there are essentially only $2$ free parameters. To solve the system \eqref{eq15} of $m+1$ equations, we will choose $4$ corresponding families of $m$ orbits each.

	\section{Length spectral resonances in ellipses}\label{LengthCoincidences}

	To create arbitrarily high cancellations in the wave trace, we need to find domains with suitably many of orbits of the same length. This section is devoted to finding ellipses for which one has two families of orbits tangent to different caustics, yet having the same length.
		\begin{def1}
			We say that $L \in \lsp(\Omega)$ has multiplicity $k$ if there are $k$ distinct closed billiard orbits of length $L$.
		\end{def1}
		Oftentimes in the integrable setting, one has rational caustics which correspond to infinite multiplicity in the length spectrum. In that case, it is useful to generalize the notion of multiplicity:
		
		\begin{def1}
			We say that a domain $\Omega$ has a \textbf{length spectral resonance} if there exist two distinct rational caustics such that all orbits tangent to either caustic have the same length.
		\end{def1}

		We now find suitable ellipses with length spectral resonances.
		
		\begin{lemm}
			Denote by $\beta(\omega; e)$ Mather's $\beta$ function associated to an ellipse $\mathcal{E}_e$ of eccentricity $e$ (cf. Section \ref{Billiard dynamics}). Then, the ratio
			\begin{align*}
				\mathcal{R}(\omega_1, \omega_2; e) = \frac{\beta(\omega_1; e)}{\beta(\omega_2; e)}
			\end{align*}
			is analytic in $(\omega_1, \omega_2, e) \in (0,1/2)^2 \times (0,1)$ and nonconstant in $e$.
		\end{lemm}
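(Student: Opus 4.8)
The plan is to obtain analyticity from the classical closed form for the lengths of caustic orbits in the ellipse, and to prove non-constancy in $e$ by a short rigidity argument using the behavior of $\beta$ near $\omega=0$ and at $\omega=1/2$. For the first point, recall (Definition \ref{mbf}, Proposition \ref{prop32}, and the discussion of $t_{p,q}=T_{p,q}$ for ellipses) that $q\,\beta(p/q;e)=L_{p,q}$ is the common length of the $(p,q)$-orbits of $\mathcal{E}_e$, all of which are tangent to a single confocal ellipse and realize the maximum of the length functional. Dividing \eqref{lengths of hyperbolic orbits} by $q$ gives
\[
\beta(p/q;e)=2\sin\varphi_0-\frac{2e}{k}E(\varphi_0,k)+\frac{4e\omega}{k}\,\mathcal{E}(k),\qquad \omega=\tfrac pq,
\]
where $k=e/\sqrt{1-\lambda^2}$, $\sin\varphi_0=\lambda/\sqrt{1-e^2}$, and the confocal caustic with parameter $\lambda$ carries rotation number $\omega$ precisely when $\omega=F(\varphi_0,k)/\bigl(2K(k)\bigr)$. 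For an elliptic caustic one has $0<\lambda<\sqrt{1-e^2}$, hence $k\in(e,1)\subset(0,1)$ and $\varphi_0\in(0,\pi/2)$; on these ranges $F$, $E$, $K$, $\mathcal{E}$ are real-analytic in their arguments and $(\lambda,e)\mapsto(k,\varphi_0)$ is real-analytic. The key step is to invert the rotation-number relation: by the monotone twist condition together with the integrability of $\mathcal{E}_e$ (Section \ref{Billiard dynamics}, Definition \ref{AAC}), $\omega$ is a strictly monotone real-analytic function of the caustic parameter with non-vanishing derivative, so the analytic implicit function theorem exhibits $\lambda$ — and hence $k$ and $\varphi_0$ — as real-analytic functions of $(\omega,e)\in(0,1/2)\times(0,1)$. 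Substituting produces a real-analytic function $\widetilde\beta(\omega;e)$ on that domain; since $\beta(\cdot;e)$ is continuous (strict concavity) and coincides with $\widetilde\beta(\cdot;e)$ on the dense set $\mathbb{Q}\cap(0,1/2)$, we conclude $\beta\equiv\widetilde\beta$. As $\beta(\omega_2;e)=L_{p_2,q_2}/q_2>0$, the quotient $\mathcal{R}=\beta(\omega_1;e)/\beta(\omega_2;e)$ is then real-analytic on $(0,1/2)^2\times(0,1)$.

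For non-constancy in $e$, suppose toward a contradiction that $e\mapsto\mathcal{R}(\omega_1,\omega_2;e)$ is constant for every $(\omega_1,\omega_2)\in(0,1/2)^2$. Fixing a reference $\omega_\ast$ and setting $g(\omega):=\mathcal{R}(\omega,\omega_\ast;e)$ (which by assumption does not depend on $e$) and $h(e):=\beta(\omega_\ast;e)>0$, this forces the factorization $\beta(\omega;e)=g(\omega)\,h(e)$ on $(0,1/2)\times(0,1)$. Letting $\omega\to 1/2^-$ — legitimate, since $1/2$ is interior to $(0,1)$ and $\beta(\cdot;e)$ is continuous there — and using that the longest $2$-periodic orbit of $\mathcal{E}_e$ is the bouncing ball along the major axis, of length $4$ for every $e$, we get $g(1/2^-)\,h(e)=\beta(1/2;e)=2$ for all $e$, so $h$ is constant and hence $\beta(\omega;e)$ does not depend on $e$ at all. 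But then $\lim_{\omega\to 0^+}\partial_\omega\beta(\omega;e)$ is also independent of $e$, whereas this one-sided derivative equals the perimeter $|\partial\mathcal{E}_e|=4E(e)$ (Section \ref{Billiard dynamics}), which is strictly decreasing on $(0,1)$ since $E$ is. This contradiction shows $\mathcal{R}$ is not constant in $e$; moreover, since $\mathcal{R}$ is real-analytic, the pairs $(\omega_1,\omega_2)$ for which $e\mapsto\mathcal{R}(\omega_1,\omega_2;e)$ happens to be constant form a proper real-analytic subvariety, so the same conclusion holds for generic $(\omega_1,\omega_2)$.

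I expect the delicate point to be the inversion step in the analyticity argument, i.e.\ checking that the frequency map of the elliptic billiard is non-degenerate — that its derivative along the foliation by confocal caustics does not vanish — so that the analytic implicit function theorem applies; this is classical but should be written carefully, e.g.\ through the action--angle description of Definition \ref{AAC} and the monotone twist condition $\partial^2 h/\partial s\,\partial s'>0$. An alternative route to non-constancy — expanding $\beta(\omega;e)$ to order $e^2$ at $e=0$ directly from the displayed closed form and verifying that the $e^2$-coefficient is not proportional to $\beta(\omega;0)=2\sin\pi\omega$ — is also available but computationally heavier, so I would keep the rigidity argument above.
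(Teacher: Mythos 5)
Your analyticity argument is essentially the paper's: both divide the closed-form length \eqref{lengths of hyperbolic orbits} by $q$ and invoke analytic dependence of the caustic parameter $\lambda$ (hence $k,\varphi_0$) on $(\omega,e)$; you are in fact more careful than the paper, which simply asserts that this dependence is ``clearly analytic,'' whereas you flag the inversion of the frequency map $\lambda\mapsto\omega=F(\varphi_0,k)/(2K(k))$ as the point needing a non-vanishing-derivative check (the twist condition alone does not give this, but for the ellipse it is classical and verifiable from the explicit formula, so this is a presentational rather than substantive gap), and you also supply the identification of the elliptic-integral expression with Mather's $\beta$ via continuity and density of rationals, which the paper leaves implicit. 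Where you genuinely diverge is non-constancy: the paper evaluates the ratio at the two degenerate ends, getting $\sin(\pi\omega_1)/\sin(\pi\omega_2)$ for the circle ($e=0$) and $\omega_1/\omega_2$ for the segment ($e\to1$), which differ whenever $\omega_1\neq\omega_2$ and hence gives non-constancy for \emph{every} such pair; you instead run a rigidity argument -- constancy for all pairs would force the factorization $\beta(\omega;e)=g(\omega)h(e)$, then $\beta(1/2;e)=2$ pins $h$ to a constant, contradicting $\beta'(0^+)=|\d\mathcal{E}_e|=4E(e)$ being strictly decreasing in $e$ -- and then upgrade to generic pairs by analyticity. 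Your argument is correct (the limits you use, $\beta(1/2;e)=2$ from the major-axis bouncing ball and $\beta'(0)=\ell$, are both available from Section \ref{Billiard dynamics}, and concavity justifies passing to the endpoint limits), but it yields a slightly weaker statement than the paper's proof: non-constancy for at least one, and then for generic, pairs rather than for all $\omega_1\neq\omega_2$. This does not damage the downstream architecture, since the quantitative input actually used in Theorem \ref{Eccentricities} is the derivative lower bound of Lemma \ref{LowerBound}, which is proved independently; but note that the paper's explicit small-$e$ computation is also what seeds that later lemma, so the circle-limit expansion you propose to avoid reappears there anyway.
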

        
		\begin{proof}
			Formulae \ref{lengths of hyperbolic orbits} and \ref{lengths of elliptic orbits} give the lengths of periodic orbits in terms of $p, q, k$ and the eccentricity $e$. Dividing by $-q$, we see that
			\begin{align*}
				\begin{split}
					\beta\left(\omega\right) =&  2 \sin \phi_0 - 2 e E\left(\phi_0, k\right) + \frac{4 e \omega}{k} \mathcal{E}(k) \qquad \text{elliptic caustics},\\
					\beta\left(\wt{\omega}\right) =&  2 \sin \phi_0 - 2 e E\left(\phi_0, k^{-1}\right) + 4 e \wt{\omega} \mathcal{E}(k^{-1}) \qquad \text{hyperbolic caustics}.
				\end{split}
			\end{align*}
			The definitions of $k = k(\lambda;e)$ and $\phi_0 = \phi_0(\lambda;e)$ clearly imply analyticity in both $\lambda$ and $e$. The dependence of $\lambda$ on $e$ and $\omega$ is also clearly analytic, from which the first claim follows.
			\\
			\\
			For a circle (eccentricity $e = 0$), we have
			\begin{align*}
				\beta_0\left(\frac{p}{q}\right) = 2 \sin \left( \frac{p \pi}{q} \right),
			\end{align*}
			while for a line segment (eccentricity $e = 1$), we have
			\begin{align*}
				\lim_{e \to 1} \beta\left(\frac{p}{q} \right) = 2 \frac{p}{q}.
			\end{align*}
			Consequently, the ratio function for the circle is given by
			\begin{align*}
				R(\omega_1, \omega_2;e) =  \frac{\sin \left( \pi \omega_1 \right)}{\sin \left( \pi \omega_2 \right)},
			\end{align*}
			and for the segment by
			\begin{align*}
				R(\omega_1, \omega_2;e) = \frac{\omega_1}{\omega_2}.
			\end{align*}
			Hence the ratio function $\mathcal{R}(\omega_1, \omega_2; e)$ is real analytic and nonconstant in $e$.
		\end{proof}
  
		\begin{lemm}\label{LowerBound}
			There are at most finitely many points $E = \{e_i\}_{i = 1}^n \cup \{1\}$ such that for each $\eps > 0$ and $\dist(e, E) > \eps$, there exist constants $c(\eps), \dt(\eps) > 0$ with the property that whenever $\omega_1, \omega_2 \leq \dt$,
			\begin{align*}
				\left|\frac{\d}{\d e} \mathcal{R}(\omega_1, \omega_2;e)\right| \geq c(\eps) \left|\omega_2 \omega_1^3 - \omega_1 \omega_2^3 \right|,
			\end{align*}
		\end{lemm}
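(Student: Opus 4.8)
The plan is to expand $\mathcal R$ and $\d_e\mathcal R$ in the rotation numbers and isolate the leading term. Write $\beta(\omega;e)=\beta_1(e)\,\omega+\beta_3(e)\,\omega^3+\beta_5(e)\,\omega^5+\cdots$; by Carminati's theorem \cite{Carminati} only odd powers occur, and the explicit elliptic–integral formulas of Section~\ref{Billiard dynamics} show that the coefficients $\beta_{2k+1}(e)$ are real–analytic in $e$ near $[0,1)$ (for $\omega$ close to $0$ the caustic modulus stays away from the branch value $1$; the $\beta_{2k+1}$ are in fact analytic functions of $e^2$ since $\mathcal E_e$ depends on $e$ only through $e^2$), with the series and its $e$–derivative converging with a radius bounded below uniformly on compact subsets of $[0,1)$; also $\beta_1(e)=\ell(e)\in(4,2\pi]$ is the perimeter. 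Differentiating $\mathcal R=\beta(\omega_1)/\beta(\omega_2)$, the numerator $\d_e\beta(\omega_1)\beta(\omega_2)-\beta(\omega_1)\d_e\beta(\omega_2)$ is antisymmetric in $(\omega_1,\omega_2)$, and each monomial $\omega_1^i\omega_2^j-\omega_1^j\omega_2^i$ with $i<j$ both odd is divisible by $\omega_2\omega_1^3-\omega_1\omega_2^3=\omega_1\omega_2(\omega_1^2-\omega_2^2)$; pulling out this common factor gives
\[
\d_e\mathcal R(\omega_1,\omega_2;e)=(\omega_2\omega_1^3-\omega_1\omega_2^3)\,\frac{D(e)+O(\omega_1^2+\omega_2^2)}{\beta_1(e)^2\,\omega_2^2\,(1+O(\omega_2^2))},
\]
where $D(e):=\beta_1(e)\beta_3'(e)-\beta_1'(e)\beta_3(e)=\beta_1(e)^2(\beta_3/\beta_1)'(e)$, and the $O(\cdot)$ terms are convergent power series in $\omega_1^2,\omega_2^2$ whose coefficients (polynomials in the $\beta_{2k+1}$ and their $e$–derivatives) are bounded uniformly for $e$ in compact subsets of $[0,1)$. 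Thus $\omega_2\omega_1^3-\omega_1\omega_2^3$ appears, up to sign, precisely as the lowest–order factor of $\d_e\mathcal R$, with remaining factor $\sim D(e)/(\beta_1(e)^2\,\omega_2^2)$.

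I would then take $E$ to be the zero set of $D$ in $[0,1)$ together with $\{1\}$. First, $D\not\equiv0$: indeed $\beta_3/\beta_1$ is non–constant in $e$, equaling $-\pi^2/6$ at the circle $e=0$ (since $\beta(\omega;0)=2\sin\pi\omega$) but strictly larger for $e>0$ — $\beta_3$ is a fixed negative multiple of the cube of the Lazutkin–Marvizi–Melrose invariant $\int_{\d\mathcal E_e}\kappa^{2/3}\,ds$ (\cite{MaMe82}), and Hölder's inequality $\int\kappa^{2/3}\,ds\le(2\pi)^{2/3}\ell^{1/3}$ holds with equality only for the circle; moreover non–constancy of some $\beta_{2k+1}/\beta_1$ is already contained in the preceding lemma. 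Being real–analytic and not identically zero on $(0,1)$, $D$ has only finitely many zeros on every compact subinterval, and these cannot accumulate at $0$ (analyticity across $0$). The delicate part will be ruling out accumulation of zeros at the degenerate endpoint $e=1$, where $\mathcal E_e$ collapses to a segment and the caustic modulus tends to $1$: there the elliptic integrals — hence $\beta_{2k+1}(e)$ and $D(e)$ — admit convergent expansions in $\sqrt{1-e^2}$ and $\sqrt{1-e^2}\log\sqrt{1-e^2}$, and a nontrivial function of this log–analytic type has only finitely many zeros near the singular point, so $D$ has finitely many zeros in $[0,1)$ and $E$ is finite. This endpoint analysis is the one step I expect to require real care; the rest is bookkeeping.

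Granting $E$, fix $\epsilon>0$. The set $K_\epsilon:=\{e\in[0,1):\dist(e,E)>\epsilon\}$ is compact and avoids the zeros of $D$, so $m(\epsilon):=\min_{K_\epsilon}|D|>0$; moreover $\beta_1\le 2\pi=:M$ on $[0,1]$, and the coefficients of the two $O(\cdot)$ remainders above are bounded on $K_\epsilon$ with the series converging for $|\omega_i|<\rho_0$, a radius independent of $e\in K_\epsilon$. I would then choose $\delta(\epsilon)\in(0,\rho_0)$ small enough that for all $e\in K_\epsilon$ and all $0<\omega_1,\omega_2\le\delta(\epsilon)$ one has $|D(e)+O(\omega_1^2+\omega_2^2)|\ge\tfrac12 m(\epsilon)$ and $|\beta_1(e)^2\omega_2^2(1+O(\omega_2^2))|\le 2M^2\omega_2^2$. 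Then, since $\omega_2\le\delta(\epsilon)$,
\[
\bigl|\d_e\mathcal R(\omega_1,\omega_2;e)\bigr|\;\ge\;|\omega_2\omega_1^3-\omega_1\omega_2^3|\cdot\frac{m(\epsilon)/2}{2M^2\omega_2^2}\;\ge\;\frac{m(\epsilon)}{4M^2\delta(\epsilon)^2}\,|\omega_2\omega_1^3-\omega_1\omega_2^3|,
\]
which is the claimed inequality with $c(\epsilon):=m(\epsilon)/(4M^2\delta(\epsilon)^2)$. (When $\omega_1=0$ or $\omega_1=\omega_2$ both sides vanish, and $\omega_2>0$ is forced since otherwise $\mathcal R$ is undefined.)
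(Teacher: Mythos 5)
Your proposal is correct and follows the same route as the paper: expand $\beta(\omega;e)$ in odd powers of $\omega$, differentiate the quotient, isolate the Wronskian $D(e)=\beta_1\frac{\d\beta_3}{\d e}-\beta_3\frac{\d\beta_1}{\d e}$ multiplying $\omega_2\omega_1^3-\omega_1\omega_2^3$ over $\beta(\omega_2;e)^2$ (the paper's \eqref{ratio derivative}), take $E$ to be the zero set of $D$ together with $\{1\}$, and finish by compactness away from $E$ for $\omega_i$ small. The one genuine divergence is how you show $D\not\equiv 0$: the paper computes $\frac{\d\beta_1}{\d e},\frac{\d\beta_3}{\d e}$ explicitly as integrals and expands $D(e)\sim 4\pi^4 e$ near the circle, whereas you use H\"older's inequality to show $\beta_3/\beta_1$ is minimized exactly at $e=0$, hence nonconstant, so $D=\beta_1^2\,\d_e(\beta_3/\beta_1)\not\equiv0$; your argument is shorter and insensitive to the normalization constant in \eqref{betai} (which should in fact be $-1/24$ rather than $-1/4$, as your value $-\pi^2/6$ at the circle reflects — immaterial for either proof), while the paper's computation additionally yields quantitative small-$e$ information. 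Your parenthetical that nonconstancy is ``already contained in the preceding lemma'' is not quite right — that lemma shows $\mathcal{R}$ is nonconstant in $e$, which does not single out the coefficient $\beta_3/\beta_1$ — but the H\"older argument makes it unnecessary. Two refinements in your version are worth keeping: factoring the antisymmetric numerator term by term so that the remainder visibly carries the factor $\omega_1\omega_2(\omega_1^2-\omega_2^2)$ (needed when $\omega_1\approx\omega_2$, and only implicit in the paper's $r=O_e(\omega_1^5+\omega_2^5)$), and flagging possible accumulation of zeros of $D$ at $e=1$, which the paper's ``isolated and of finite multiplicity'' phrasing elides; your sketched $e\to1$ expansion is the step that would make the stated finiteness of $E$ fully rigorous (for the later application, where one works near a fixed $e_0\in(0,1)\setminus E$, isolated zeros already suffice).
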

		
		\begin{proof}
			Note that the expansion of Mather's $\beta$ function converges uniformly for $e \in [0,1 - \eps]$:
			\begin{align*}
				\beta(\omega_i;e) \sim \beta_1(e) \omega_i + \beta_3(e) \omega_i^3 + O_e(\omega_i^5).
			\end{align*}
			From \cite{Sorr15}, we know that
			\begin{align}\label{betai}
				\begin{split}
					\beta_1(e) =& \ell = | \d \Omega_e |,\\
					\beta_3(e) =&- \frac{1}{4} \left( \int_0^\ell \ka^{2/3}(s) ds \right)^3,
				\end{split}
			\end{align}
			$\ka$ being the curvature of $\mathcal{E}_e$ in arclength coordinates. The remainder is again uniform in $e \in [0,1-\eps]$. Noting that $\omega_2 \lesssim_e |\beta(\omega_2; e)| = O_e(1)$ uniformly, we have
			\begin{align}\label{ratio derivative}
				\frac{\d}{\d e} \mathcal{R}(\omega_1, \omega_2;e) = \frac{\left(\beta_1 \frac{\d \beta_3}{\d e} - \beta_3 \frac{\d \beta_1}{\d e}\right) \left(\omega_2 \omega_1^3 - \omega_1 \omega_2^3\right) + r(\omega_1, \omega_2;e)}{\beta(\omega_2;e)^2},
			\end{align}
			where $r = O_e(\omega_1^5 + \omega_2^5)$ is another remainder. To compute $\beta_i$ for the ellipse, it is simpler to use the formulae \ref{betai}, rather than differentiating \ref{lengths of hyperbolic orbits} and \ref{lengths of elliptic orbits}. We parametrize the ellipse $\mathcal{E}_e$ by $(-\sin t, (1- e^2)^{1/2}\cos t)$ so that
			\begin{align*}
				\begin{split}
					\beta_1(e) = & \int_0^{2\pi} (1 - e^2 \sin^2 t)^{1/2} dt,\\
					\beta_3(e) = &- \frac{1}{4} \left(\int_0^{2\pi} \left( 1 - e^2 \sin^2 t \right)^{-1/2} dt \right)^3.
				\end{split}
			\end{align*}
			Differentiating, we find that
			\begin{align*}
				\frac{\d \beta_1}{\d e} = &- \int_0^{2\pi} \frac{e \sin^2 t}{(1 - e^2 \sin^2 t)^{1/2}} dt,\\
				\frac{\d \beta_3}{\d e} = &  \frac{3}{4} (1- e^2) \left(\int_0^{2\pi} \left( 1 - e^2 \sin^2 t \right)^{-1/2} dt \right)^2 \int_0^{2\pi} \frac{e \sin^2 t}{(1 - e^2 \sin^2 t)^{3/2}} dt\\
				& + \frac{e}{2}\left(\int_0^{2\pi} \left( 1 - e^2 \sin^2 t \right)^{-1/2} dt \right)^3.
			\end{align*}
			Denote by $I_p$ the integrals
			\begin{align*}
				I_p = \int_0^{2\pi} \frac{\sin^2 t}{(1 - e^2 \sin^2 t)^{p/2}},
			\end{align*}
			for $p = 1, 2$ and by $I_0$, the integral
			\begin{align*}
				I_0 = \int_0^{2\pi} (1 - e^2 \sin^2 t)^{-1/2} = 4 K(e).
			\end{align*}
			We then have
			\begin{align*}
				\beta_1 \frac{\d \beta_3}{\d e} - \beta_3 \frac{\d \beta_1}{\d e} = -e \ell I_0^2 \left( \frac{3}{4} I_3 - \frac{1}{2} I_0 - \frac{(1-e^2)}{4 \ell} I_0 I_1 \right).
			\end{align*}
			When $e = 0$, corresponding to a circle, $I_0 = 2\pi$ and $I_1 = I_3 = \pi$. Hence, for $e$ small, we have
			\begin{align*}
				\beta_1 \frac{\d \beta_3}{\d e} - \beta_3 \frac{\d \beta_1}{\d e} \sim 4 \pi^4 e + O(e^2).
			\end{align*}
			This gives a lower bound on the truncated Taylor series of $\frac{\d}{\d e} \mathcal{R}$, which shows in particular that it is nonzero and nonconstant (in $e$). Hence its zeros
			\begin{align*}
				E = \left\{ e : \frac{\d \mathcal{R}}{\d e} = 0\right\} \cup \{1\}
			\end{align*}
			are isolated and of finite multiplicity. Away from an $\eps$ neighborhood of $E$ and $\omega_i < \delta(\eps)$, the estimate on $\beta(\omega_2; e)$ together with \ref{ratio derivative} implies that
			\begin{align*}
				\frac{\d}{\d e} \mathcal{R}(\omega_1, \omega_2;e) \gtrsim_e c(\eps) \left( \omega_2 \omega_1^3 - \omega_1 \omega_2^3 \right).
			\end{align*}
		\end{proof}

		\begin{theo}\label{Eccentricities}
			There exists a dense set of eccentricities $e$ such that any ellipse of eccentricity $e$ has at least two rational caustics of distinct types $(p,q)$ and $(p',q')$, with all corresponding orbits having the same length. Moreover, for any $m, n \in \Z$, one can choose such caustics to satisfy $q = q' + m \mod n$.
		\end{theo}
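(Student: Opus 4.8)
The plan is to convert the statement into a Diophantine problem about the ratio function $\mathcal{R}$ and then solve that problem by the intermediate value theorem. Since we only need density, it suffices to produce, for every $e_0\in(0,1)$ and every small $\rho>0$, an eccentricity $e^*$ in $(e_0-\rho,e_0+\rho)$ with the required pair of caustics; and because the exceptional set $E$ of Lemma \ref{LowerBound} is finite, I may first move $e_0$ by less than $\rho/2$ so that a closed interval $\bar J=[e_0-\eta_0,e_0+\eta_0]$, with interior $J$, lies in $(0,1)\setminus E$. In an ellipse every rational $p/q\in(0,\tfrac12)$ is the rotation number of exactly one confocal elliptic caustic, and by integrability all orbits tangent to it have the common length $q\beta(p/q;e)$. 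Hence the assertion ``$\mathcal{E}_{e^*}$ has two rational caustics of distinct types $(p,q)$ and $(p',q')$, all of whose orbits share one length, with $q\equiv q'+m\pmod n$'' reduces to finding coprime pairs $(p,q)\neq(p',q')$ with $p/q,p'/q'\in(0,\tfrac12)$, $q\equiv q'+m\pmod n$, and
\[
\mathcal{R}\!\left(\tfrac{p}{q},\tfrac{p'}{q'};\,e^*\right)=\frac{q'}{q}
\]
for some $e^*\in J$.

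Next I record the expansions needed. Put $c(e):=\beta_3(e)/\beta_1(e)$, which is negative on $(0,1)$ since $\beta_1=\ell>0$ and $\beta_3<0$. From the uniform (odd) Taylor expansion $\beta(\omega;e)=\beta_1(e)\omega+\beta_3(e)\omega^3+O(\omega^5)$ one obtains, uniformly for $e\in\bar J$,
\[
\mathcal{R}(\omega_1,\omega_2;e)=\frac{\omega_1}{\omega_2}\Bigl(1+c(e)(\omega_1^2-\omega_2^2)+O\bigl((\omega_1^2+\omega_2^2)^2\bigr)\Bigr).
\]
Moreover the computation in the proof of Lemma \ref{LowerBound} exhibits $\beta_1^2c'=\beta_1\beta_3'-\beta_3\beta_1'$ as a nonconstant real-analytic function (equal to $4\pi^4e+O(e^2)$ near $0$) whose only zeros lie in $E\setminus\{1\}$; so $c'$ has a fixed nonzero sign on $\bar J$, and $c$ is strictly monotone there.

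Now the arithmetic. Set $K_0:=-c(e_0)>0$. For a large prime $p>n$ take $p'=p+1$, an integer $q\equiv a\pmod n$ (any fixed $a$) lying within $O(n)$ of $\sqrt{3K_0}\,p^{3/2}$ with $p\nmid q$, and an integer $q'\equiv a-m\pmod n$ lying within $O(n)$ of $(p+1)q/(2p)$ with $\gcd(p+1,q')=1$; such $q,q'$ exist by elementary arithmetic-progression considerations, and then $\gcd(p,q)=\gcd(p',q')=1$, $q-q'\equiv m\pmod n$, and $(p,q)\neq(p',q')$. As $p\to\infty$ the rotation numbers $\omega_1=p/q$ and $\omega_2=p'/q'$ tend to $0$ with $\omega_2/\omega_1\to2$, and a direct estimate gives $\Delta:=\omega_2^2-\omega_1^2=\tfrac{1}{K_0p}+O(p^{-5/2})$, so $K_0\Delta-\tfrac1p=O(p^{-3/2})$. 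Plugging into the expansion, with $H(e):=\mathcal{R}(\omega_1,\omega_2;e)-q'/q$,
\[
H(e)=\frac{\omega_1}{\omega_2}\Bigl(K_0\Delta-\tfrac1p-(c(e)-c(e_0))\Delta+O(\omega_1^4)\Bigr).
\]
Fix a small $\eta\in(0,\eta_0)$. For $p$ large the terms $K_0\Delta-\tfrac1p$ and $O(\omega_1^4)$ are $O(p^{-3/2})$, negligible against $(c(e_0\pm\eta)-c(e_0))\Delta$, which is of size $\eta|c'(e_0)|/(K_0p)$ up to constants and has opposite signs at $+\eta$ and $-\eta$; hence $H(e_0-\eta)$ and $H(e_0+\eta)$ have opposite signs. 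By the intermediate value theorem there is $e^*\in(e_0-\eta,e_0+\eta)\subset J$ with $\mathcal{R}(\omega_1,\omega_2;e^*)=q'/q$, i.e. $q\beta(p/q;e^*)=q'\beta(p'/q';e^*)$, which is exactly the desired length spectral resonance with the prescribed congruence; varying $e_0$ and $\rho$ produces a dense set of such eccentricities.

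I expect the Diophantine calibration to be the main obstacle: one must choose $p,q,p',q'$ so that the two rotation numbers are simultaneously small, comparable, coprime, congruent as required, and --- most delicately --- so that $q'/q$ lands inside the extremely narrow window $\mathcal{R}(\omega_1,\omega_2;J)$, whose width is only of order $\rho\,\omega_1\omega_2|\omega_1^2-\omega_2^2|$. Verifying the identity $\Delta=\tfrac{1}{K_0p}+O(p^{-5/2})$ for the explicit choice above, and checking that this error genuinely beats the oscillation of $c$ across $J$, is the technical core; everything else is bookkeeping with the $\beta$-expansion and standard residue arguments.
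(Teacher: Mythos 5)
Your proposal is sound in substance and proves the theorem, but by a route whose execution differs from the paper's. Both arguments rest on the same non-degeneracy, namely the non-vanishing of $\beta_1\beta_3'-\beta_3\beta_1'$ away from the finite exceptional set $E$ of Lemma \ref{LowerBound}, and both solve the resonance equation $\mathcal{R}(p/q,p'/q';e)=q'/q$ by moving $e$ slightly near a fixed $e_0\notin E$. The paper produces the candidate pairs by fixing irrational $\omega_1,\omega_2$ whose $\beta$-ratio $\tau$ is irrational, approximating $\tau$ and the $\omega_i$ by rationals (adjusting the residues of $q_1,q_2$ mod $n$ along the way), and then applying a quantitative implicit function theorem with the derivative lower bound of Lemma \ref{LowerBound}. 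You instead engineer the rationals directly from the two-term expansion $\beta=\beta_1\omega+\beta_3\omega^3+O(\omega^5)$, calibrating $p'=p+1$, $q\approx\sqrt{3K_0}\,p^{3/2}$, $q'\approx(p+1)q/(2p)$ so that $q'/q$ lands in the narrow band swept by $\mathcal{R}$ as $e$ ranges over $(e_0-\eta,e_0+\eta)$, and you close with the intermediate value theorem via strict monotonicity of $c=\beta_3/\beta_1$ (equivalent to the non-vanishing used in Lemma \ref{LowerBound}, since $\beta_1^2c'=\beta_1\beta_3'-\beta_3\beta_1'$ and $\beta_1>0$). Your version avoids the paper's unproved ancillary supposition that irrational $\omega_1,\omega_2$ with irrational $\beta$-ratio exist at $e_0$, and it makes fully explicit the quantitative point (the attainable window of ratios has width comparable to $\eta\,\omega_1^3$) that the paper's IFT step passes over quickly; the price is that your construction carries a Diophantine burden the paper's approximation scheme never faces.

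One arithmetic step is overclaimed: producing $q'\equiv a-m\pmod n$ with $\gcd(q',p+1)=1$ within $O(n)$ of the target is not elementary for every prime $p$. If $d=\gcd(n,p+1)>1$ divides $a-m$, then \emph{no} candidate in the progression is coprime to $p+1$, and even after choosing $a$ to avoid this, a bounded window can be entirely obstructed when $p+1$ has many small prime factors (one forbidden residue class modulo each such prime can cover a bounded range of $j$ in $q_0'+jn$). This is easily repaired because your error budget has enormous slack: any $q'$ within $o(p^{3/2})$ of the target perturbs $K_0\Delta-\tfrac1p$ by $o(1/p)$, which is still negligible against the sign-determining term of size $\asymp\eta/p$. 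So either widen the window to $O(n(\log p)^2)$ and invoke a Jacobsthal-type (or direct sieve) bound, or simply take $q'$ to be a prime $\equiv a-m\pmod n$ near the target via the prime number theorem in arithmetic progressions (it is automatically coprime to $p+1$ by size); with that adjustment, and the corresponding trivial choice of $q$, the proof goes through.
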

		\begin{proof}
			Fix $e_0 \in (0,1) \backslash E$ and suppose $\beta_{e_0}(\omega_1)/\beta_{e_0}(\omega_2) = \tau \in \R \backslash \Q$ for some $\omega_1, \omega_2 \in \R \backslash \Q$. We can approximate 
			\begin{align*}
				\tau = \lim_{k \to \infty} \frac{q_2(k)}{q_1(k)}
			\end{align*}
			by a sequence of rationals. For any $\eps>0$ there exists $k_0 \in \N$ such that for all $k \geq k_0$, we have 
			\begin{align*}
				\left| \tau - \frac{q_2(k)}{q_1(k)} \right| < \epsilon. 
			\end{align*}
			As $\tau$ is irrational, each of the sequences $p_1(k), p_2(k), q_1(k), q_2(k)$ tends to infinity as $k\to \infty$ and since $\omega_1$ and $\omega_2$ are irrational, they can individually be approximated by two other sequences 
			\begin{align*}
				\lim_{k\to \infty} \frac{p_j(k)}{q_j(k)} = \omega_j, \text{ for } j=1,2. 
			\end{align*}
            It is also clear that we can choose the residues of $q_1(k)$ and $q_2(k)$ modulo $n$, since
            \begin{align*}
                \frac{q_2(k) - q_1(k) \mod n}{q_2(k)} \to 0,
            \end{align*}
            as $k \to \infty$. In particular, there exists a $k_1 \in \N$ such that for all $k\geq k_1$ we have 
			\begin{align*}
				\left|\omega_j - \frac{p_j(k)}{q_j(k)} \right| < \epsilon.
			\end{align*}
			Note that 
			\begin{align*}
				\frac{\beta_{e_0}\left(\frac{p_1(k)}{q_1(k)}\right)}{\beta_{e_0}\left(\frac{p_2(k)}{q_2(k)}\right)}
			\end{align*}
			approximates $\tau$ arbitrarily well. Choose $k_2$ so that for all $k \geq k_2$,
            \begin{align*}
                \left|\frac{\beta_{e_0}\left(\frac{p_1(k)}{q_1(k)}\right)}{\beta_{e_0}\left(\frac{p_2(k)}{q_2(k)}\right)} - \tau \right| < \eps.
            \end{align*}
            Since the ratio function 
			\begin{align*}
				R\left(\frac{p_1(k)}{q_1(k)},\frac{p_2(k)}{q_2(k)}; e \right)=\frac{\beta_{e}\left(\frac{p_1(k)}{q_1(k)}\right)}{\beta_{e}\left(\frac{p_2(k)}{q_2(k)}\right)}
			\end{align*}
			is nontrivial and analytic as a function of $e$, with a lower bound on its derivative from Lemma \ref{LowerBound}, we can use the quantitative implicit function theorem (see e.g. \cite{liverani}) to see that for some $e_k$ close to $e_0$, this ratio equals $q_2(k)/q_1(k)$.  In other words, $e_k$ solves the length spectral resonance equation
			\begin{align*}
				q_1(k) \cdot \beta_{e_k}\left(\frac{p_1(k)}{q_1(k)}\right) = q_2(k) \cdot \beta_{e_k}\left(\frac{p_2(k)}{q_2(k)}\right).
			\end{align*}
			Again due to the nontrivial dependence on $e$ and the implicit function theorem, we can choose a sequence $e_k$ converging to $e_0$ along a sequence of solutions to the length spectral resonance equation. For any $\delta>0$, there is $k_2$ such that whenever $k>k_2$, the ratio
			\begin{align*}
				\frac{q_1(k) \cdot \beta_{e_k}\left(\frac{p_1(k)}{q_1(k)}\right)}{q_2(k) \cdot \beta_{e_k}\left(\frac{p_2(k)}{q_2(k)}\right)}
			\end{align*}
			is $\frac{\dt}{2}$ close to $1$. The lemma then follows.
		\end{proof}

	\begin{rema}
	    The family of ellipses with length spectral resonances is dense, but nevertheless has measure $0$ and is in fact countable. This follows from the fact that the lengths of all orbits are given by functions \eqref{lengths of hyperbolic orbits} and \eqref{lengths of elliptic orbits} which are analytic in $e$. Since there are only countably many pairs $(p, q)$ or $(\tilde{p}, q)$, proving countable resonance between any two given pairs is sufficient. However, those resonances are the zeros of analytic functions and thus are either locally finite or coincide with the domain of definition. The latter case (identity resonance) can be excluded by considering expansions as $e\rightarrow 1$. 
     \label{resonancezeormeasure}
	\end{rema}
	
	\section{Destroying stray orbits}\label{Destroying other orbits}
	
	\subsection{Controllable family of deformations}
    
	We begin with an ellipse $\Omega_0 = \mathcal{E}_e$ satisfying the hypotheses of Theorem \ref{Eccentricities}; there are one parameter families of type $(p, q)$ and type $(p', q')$ orbits which are tangent to confocal ellipses and share the same length, $L$. Let $\Omega$ be any $C^\infty$-small deformation of $\Omega_0$ which preserves a family $\Fm$ of $4m$ distinct periodic orbits -- $2m$ of type $(p,q)$ and $2m$ of type $(p', q')$.
	
	\begin{def1}
		Let $\Omega_0 \mapsto \Omega$ be a deformation of the ellipse as described above. A \textbf{stray orbit} of length $L$ is an orbit in $\Omega$ which has length $L$ and does not belong to the family $\Fm$ of $4m$ preserved orbits.
	\end{def1}
	Our aim is to prescribe a deformation which destroys any additional stray orbits of the same length $L$, which will lead to cancellations in Theorem \ref{mainth}. They will preserve $4m$ orbits inside of an ellipse, provide needed values of $\det \partial^2 \mathcal{L}$ and $\partial_{deg}^3 \mathcal L$ and destroy other orbits of length $L$. In Section \ref{Solving the main equation}, we will use this family to match the parameters and create cancellations.

		\begin{figure}
				\centering
				\includegraphics[width=8cm]{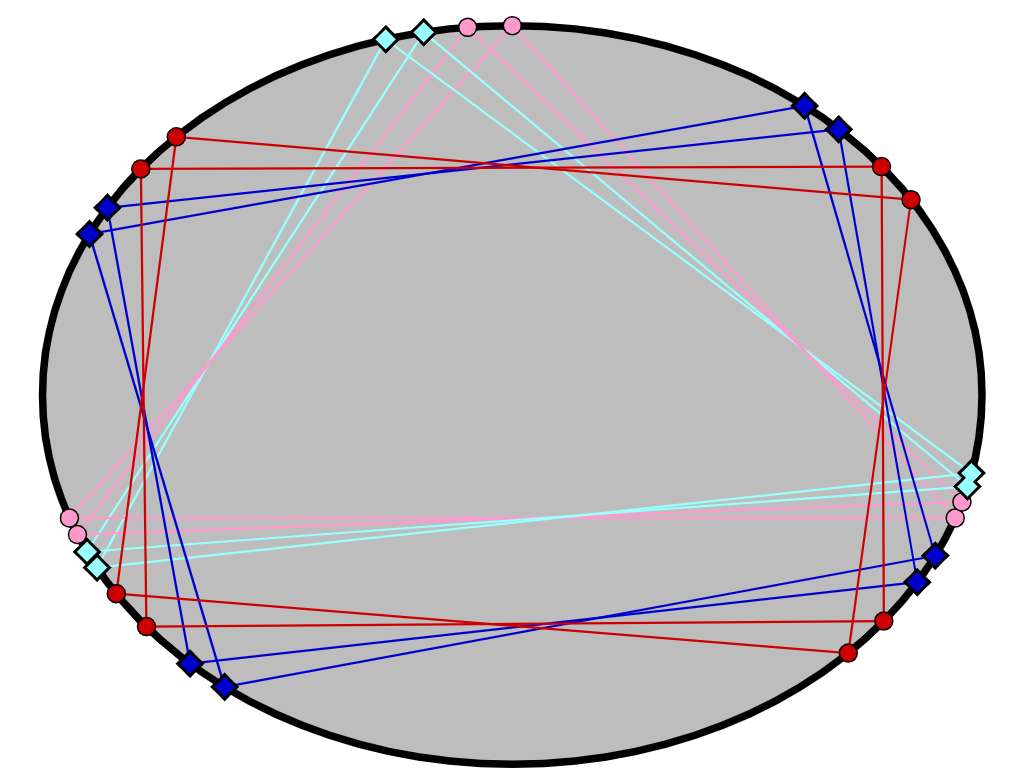}
				\caption{An ellipse with $4m$ orbits selected ($m = 2$). There are $4$ families of orbits. Lighter colored orbits have rotation number $p/q$ and the darker ones -- $p'/q'$. Red and pink orbits (circles) will become elliptic while blue and light blue ones (diamonds) will become hyperbolic after the deformation.}
				\label{fig:scheme}
			\end{figure}
	
	\begin{def1}
		Let $m > 1$ and $\Omega_0 = \Ee$ be an ellipse satisfying the length spectral resonance condition in Theorem \ref{Eccentricities}. We introduce the parameters $\bar{\eps} = (\eps_1, \ldots, \eps_{4m})$ and $\bar{\dt}= (\dt_1, \ldots, \dt_{4m})$, with $0 < \eps_u < \eps_0$ and $\sqrt{\eps_u} < \dt_u < \dt_0$ for every $u = 1, \ldots, 4m$ and some fixed $\eps_0, \dt_0 > 0$. We call a family of deformations $\left\{\Omega_{\bar{\eps}, \bar{\dt}}\right\}$ \textbf{controllable} if the following conditions hold:
		\begin{enumerate}
			\item $\Omega_{\bar{\eps}, \bar{\dt}}$ is a multiscale normal deformation of $\Omega_0$, by which we mean that
			\begin{align}
				\Omega_{\bar{\eps}, \bar{\dt}} = \Omega_0 + \mu_{\bar{\eps}, \bar{\dt}}(s)\Vec{n}(s).
				\label{eq62}
			\end{align}
			\item There is a family $\mathcal{F}_m = \left\{\gamma_u\right\}_{u = 1}^{4m}$ consisting of $4m$ periodic orbits in $\Omega_0$, all of which have the same length and remain fixed in $\Omega_{\bar{\eps}, \bar{\dt}}$, i.e. the deformation makes first order contact with $\Omega_0$ at the reflection points of each orbit. The first $2m$ are of type $(p, q)$ and are tangent to a caustic $\Gamma$, while the last $2m$ are of type $(p', q')$ and are tangent to a second caustic $\Gamma'$. We denote the collection of their reflection points by $\d \Fm$ and individually by $x_i$, $1 \leq i \leq 2m(q+q')$.
			\item In $\Omega_{\bar{\eps}, \bar{\dt}}$, the orbits $\gamma_u$, for $u = 1, \ldots, m$ and $u = 2m+1, \ldots, 3m$, are hyperbolic while the others are elliptic.
			\item For $u = 1, \ldots, 4m$, we have
			\begin{align*}
				|\det \partial^2 \mathcal L(\gamma_u)| = \eps_u, \;\;\; \partial_{deg}^3 \mathcal L = \dt_u.
			\end{align*}
			\item There are no additional stray orbits of length $L$ in $\Omega_{\bar{\eps}, \bar{\dt}}$ and $L$ is isolated in the length spectrum.
			\item The perimeter of $\Omega_{\bar{\eps}, \bar{\dt}}$ is strictly less then that of $\Omega_0$.
			\item For all $\bar{\eps}, \bar{\dt}$, the deformation $\mu_{\bar{\eps}, \bar{\dt}}(s)$ remains in a fixed $C^{\infty}$ neighborhood of $0$.
			\item There exists a $\xi > 0$ such that for each $\gamma_u \in \mathcal{F}_m$, the perturbation $\mu$ satisfies
			\begin{align*}
				\left\|\mu_{\bar{\eps}, \bar{\dt}}(s)|_{\left(s_i(\gamma_u) - \xi, s_i(\gamma_u) + \xi\right)}\right\|_{C^{2m+2}} = O(\max(\eps_u, \dt_u)),
			\end{align*}
			where $s_i(\gamma_u)$ are the arclength coordinates of reflection points of $\gamma_u$.
		\end{enumerate}
		\label{def61}
	\end{def1}
	
	While Condition $8$ above guarantees that the deformation $\Omega$ will be $\bar{\eps}$-$\bar{\dt}$-close to $\Omega_0$ in a neighborhood of the reflection points of selected orbits, no constraints are imposed away from them. In what follows, our deformation $\mu$ will be split into \textbf{local} (near the reflection points) and \textbf{nonlocal} (away from them) \textbf{zones}. If we denote these by $\mu_1$ and $\mu_2$ respectively, then Condition $8$ asserts that $\|\mu_1\|_{C^{2m + 2}} \lesssim \max(\|\bar{\eps}\|, \|\bar{\dt}\|)$. We will measure the norm of $\mu = \mu_1 + \mu_2$ by an independent parameter $\iota$, which is allowed to be much greater than $\|\bar{\eps}\|, \|\bar{\dt}\|$; it effectively measures the size of $\mu_2$ and need not be $o(1)$ with respect to $\|\bar{\eps}\|, \|\bar{\dt}\|$.
	
	\begin{rema}
		Note that if $\iota \neq o(1)$ with respect to $\|\bar{\eps}\|, \|\bar{\dt}\|$, then the limiting domain of $\Omega_{\bar{\eps}, \bar{\dt}}$ as $\bar{\eps}, \bar{\dt} \to 0$ will \textit{not} be an ellipse. If the perturbation is $C^2$-small (independently of $\bar{\eps}, \bar{\dt}$), then convexity will be preserved.
	\end{rema}

	The remainder of this section will be dedicated to a proof of the following theorem.
    
	\begin{theo}
		For every $m>1$, there exist controllable families of deformations in any $C^{\infty}$ neighborhood of $\Omega_0$.
		\label{th3}
	\end{theo}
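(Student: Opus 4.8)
The plan is to construct the controllable family explicitly by decomposing the normal deformation function $\mu_{\bar{\eps}, \bar{\dt}}$ into a \emph{local} part $\mu_1$, supported in small arcs of radius $\xi$ around the $2m(q+q')$ reflection points of the orbits in $\Fm$, and a \emph{nonlocal} part $\mu_2$, supported away from these arcs. First I would address the nonlocal zone. Here the goal is entirely qualitative: destroy every stray orbit of length $L$ while preserving convexity and strictly decreasing the perimeter (Conditions 5, 6, 7). The key point is that since $L$ is isolated in $\lsp(\Ee)$ among orbits of the two rotation numbers $p/q, p'/q'$, and by the proposition of \cite{kov21} there is an a priori bound $p < p_0$ on winding numbers of orbits of length $\le L$, there are only finitely many rotation numbers $(p'', q'')$ whose orbits can have length near $L$; each such orbit family in the ellipse is either already excluded or can be destroyed by a suitable ``bump'' in $\mu_2$ that changes the length functional along that orbit's invariant curve without touching $\d \Fm$. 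One makes $\mu_2$ generic enough (e.g.\ a finite-dimensional family of bumps, then a Baire/transversality argument over the compact set of candidate stray configurations) so that no critical point of $\LL_{\Omega}$ of value $L$ survives outside $\Fm$. Taking $\mu_2 \le 0$ with strict inequality somewhere and $\mu_2$ of size $\iota$ (independent of $\bar\eps,\bar\dt$) handles the perimeter and convexity conditions, the latter since $\iota$ is $C^2$-small.

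Next I would handle the local zone, which is where the quantitative conditions 2, 3, 4, 8 live. Near each reflection point $x_i$ we need $\mu_1$ to (i) make first-order contact with $\Omega_0$ so that $\Fm$ is preserved as a family of periodic orbits of length $L$, (ii) prescribe the curvature and its first derivative at $x_i$ so as to realize the target values $|\det \d^2\LL(\gamma_u)| = \eps_u$ and $\d^3_{deg}\LL = \dt_u$, and (iii) control the ellipticity/hyperbolicity dichotomy of Condition 3. Since preserving the orbit only constrains $\mu_1(x_i)=0$ and $\mu_1'(x_i)=0$ (tangency), the curvature $\kappa_i$ and $\kappa_i^{(1)}$ of $\Omega$ at $x_i$ are free to be prescribed via the second and third Taylor coefficients of $\mu_1$ at $x_i$. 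The entries $a_i, b_i$ of the tridiagonal Hessian $\d^2\LL$ depend on the curvatures through \eqref{eq37}, so one sets up the map from the vector of curvature jets at all reflection points of a given $\gamma_u$ to the pair $(\det \d^2\LL(\gamma_u), \d^3_{deg}\LL(\gamma_u))$ — this is the ``large system of equations'' mentioned in Step 4 of the outline. Because $\det \d^2 \LL$ of the degenerate ellipse orbit vanishes and its gradient in the curvature directions is nonzero (the kernel is one-dimensional by Proposition \ref{prop32}), the implicit function theorem lets us solve for a curvature perturbation of size $O(\eps_u)$; similarly $\d^3_{deg}\LL$ moves away from $0$ at first order in the curvature jet, giving size $O(\dt_u)$, which yields Condition 8. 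The sign of $\det \d^2\LL$ relative to $(-1)^{q+1}$ — i.e.\ elliptic versus hyperbolic — is a free choice of sign in this perturbation, giving Condition 3.

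I would then verify that the local and nonlocal constructions are compatible: choose $\xi$ small enough that the arcs around distinct reflection points are disjoint and disjoint from $\supp \mu_2$, so $\mu = \mu_1 + \mu_2$ is well-defined and all conditions hold simultaneously; the nonlocal destruction argument must be carried out for the domain already carrying $\mu_1$, but since $\mu_1 = O(\max(\|\bar\eps\|,\|\bar\dt\|))$ is much smaller than $\iota$, the stray-orbit analysis is a small perturbation of the $\mu_2$-only analysis and still goes through. Finally, one checks that no \emph{new} stray orbits are created by $\mu_1$ itself: an orbit passing through a local zone either coincides with one of the preserved orbits or, being a genuine perturbation of an ellipse orbit of a rotation number already controlled above, has length bounded away from $L$; orbits avoiding all local zones are governed by the nonlocal analysis.

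The main obstacle is the nonlocal zone — Condition 5, destroying \emph{all} stray orbits while simultaneously respecting the rigid constraint that $\d\Fm$ and the prescribed jets there remain untouched. The difficulty is that the set of potential stray orbits is a priori infinite-dimensional to survey (orbits of every period), and one must show a single finite-parameter deformation $\mu_2$ kills all of them at once; this requires combining the winding-number bound of \cite{kov21}, the isolation of $L$, a compactness argument reducing to finitely many rotation classes, and a careful transversality argument showing that length-$L$ critical points of $\LL$ outside $\Fm$ form a nowhere-dense set in the parameter space of admissible $\mu_2$. Ensuring that this destruction does not accidentally resurrect an orbit of a different class at length exactly $L$ — a codimension-one coincidence — is the delicate part, and is presumably why the authors call this ``the most difficult section of the paper.''
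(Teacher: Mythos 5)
There is a genuine gap, and it sits exactly where your proposal leans on genericity. Two classes of potential stray orbits are immune to any transversality argument in $\mu_2$. First, orbits of length $L$ in the ellipse whose reflection points are \emph{contained in} $\d \Fm$ (or which share reflection points with two selected orbits): such an orbit is preserved by every deformation making first-order contact at $\d\Fm$, so no choice of $\mu_2$, generic or not, can destroy it. This is why the paper must begin with an orbit-selection step you omit entirely: Definition \ref{def62} (``correctly selected'' families, including avoidance of the minor axis and disjointness of reflection points) and Proposition \ref{Condition1}, which shows via analyticity of the elliptical-coordinate parametrizations \eqref{eq315}--\eqref{eq316} and strict monotonicity of $F(\phi,k)/K(k)$ in $k$ that only finitely many orbits violate the no-covering condition, so the $4m$ orbits can be chosen to avoid them. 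Second, the rotations of a selected orbit $\gamma_u$ along its caustic: these have length \emph{exactly} $L$ in $\Ee$ and their reflection points can be arbitrarily close to $\d\Fm$, so they are untouched by $\mu_2$ (which must vanish near $\d\Fm$) and only weakly affected by $\mu_1$. Your claim that an orbit passing through a local zone but distinct from $\Fm$ ``has length bounded away from $L$'' is false for precisely this family, and it is the heart of the difficulty. The paper's resolution is structural: $\mu_1$ is supported only near the \emph{first} reflection point $\d\Fm^1$ of each orbit, $\mu\equiv 0$ in a neighborhood of the remaining reflection points, the local profile is the cubic \eqref{eq626} whose only first-order-contact point is $s_1$, and a dedicated lemma (using the auxiliary functionals $\mathcal{L}_1$, $\mathcal{L}_1^0$ with the first point frozen) shows that first-order contact only at $s_1$ rules out nearby stray orbits. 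Nothing in your proposal plays this role.

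Two further differences are worth noting. The paper's nonlocal mechanism is not a Baire/transversality argument but a monotonicity one: $\mu_2 = -\iota$ away from $\d\Fm$ with $\iota \gg \max(\|\bar\eps\|,\|\bar\dt\|)$, so by the first-variation formula of Lemma \ref{lema63} every candidate stray orbit with at least one reflection point in the nonlocal zone has its length pushed strictly below $L$; this uniform margin is what keeps the exclusion stable under the later fixed-point adjustment of $\bar\eps,\bar\dt$ in Section \ref{Solving the main equation}, something a bare genericity statement would not automatically provide. Moreover, Lemma \ref{lema63} fails near the minor axis because of type $2$ degeneracy ($DB^q_{\Omega_0}=\Id$), which the paper handles with a homothety of the ellipse in a $\xi/2$-neighborhood of the minor axis plus the exclusion of the minor axis in orbit selection; your compactness-over-rotation-classes argument does not see this degenerate family. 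Your local step (implicit function theorem on curvature jets at all reflection points) is plausible but unnecessarily heavy: the paper shows the determinant and $\partial^3_{deg}\LL$ depend, to the required order, only on $\mu''(s_1)$ and $\mu'''(s_1)$ at the single marked point, and solves explicitly via \eqref{eq625}.
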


    \begin{rema}
        The proof of Theorem \ref{th3} is quite technical. With no loss of continuity, the reader may skip ahead directly to Section \ref{Solving the main equation} in order to see how the existence of controllable families implies Theorem \ref{mainth}.
    \end{rema}
	
	The deformation $\mu$ will be divided into two zones. In a neighborhood of the reflection points of preserved orbits $\gamma_u \in \mathcal{F}_m$, finer control is required: $\mu$ is smaller with respect to the $C^\infty$ topology and the Hessian of the length functional $\LL$ will be prescribed there. Outside of this neighborhood, there is more freedom to deform the domain but we must eliminate all stray orbits of length $L$. The rationale for such a decomposition into local and nonlocal zones is the dependence of Balian-Bloch-Zelditch wave invariants only on the local geometry of the boundary near reflection points in $\d \Fm$. As above, we will correspondingly decompose the normal deformation into $\mu = \mu_1 + \mu_2$, with $\mu_1$ and $\mu_2$ supported near the local and nonlocal zones respectively. Moving forward, we will denote by $\d \Fm \subset \d \Omega^{2m(q+q')}$ the collection of all reflection points of the $4m$ selected orbits in $\mathcal{F}_m$. The proof of Theorem \ref{th3} has four main steps.
	\begin{itemize}
		\item \textbf{Orbit selection.} In Definition \ref{def62}, we introduce the notion of a \textit{correctly selected family} of orbits in the starting ellipse $\Ee$. We denote by $\Fm$ a family of $4m$ orbits, $2m$ having type $(p,q)$ and $2m$ having type $(p',q')$. The conditions are combinatorial in nature and ensure that the reflection points of orbits (not necessarily those in $\Fm$) are sufficiently disjoint. This makes it possible to deform $\Ee$ within a controllable family in Steps $2$ and $3$ below. To show that such families exist, we show in Proposition \ref{Condition1} that orbits which are not correctly selected form a union of submanifolds in the configuration space $\Ee^{4m}$ which have positive codimension. In particular, ``correctly selected'' is an open and full measure condition.
   
        \item \textbf{Nonlocal deformation.} Having fixed a family $\Fm$ of $4m$ correctly selected orbits, the main idea is to make the nonlocal part $\mu_2$ of the deformation $\mu$ negative while preserving convexity. This will decrease the perimeter of $\Omega_0$ together with the lengths of all possible stray orbits of length $L$ which avoid a neighborhood of $\d \Fm$. The norm of $\mu_2$ is measured by an independent parameter $\iota$ and is allowed to be much greater than $\mu_1$, unrestricted by the deformation parameters $\bar{\eps}$ and $\bar{\dt}$. Hence, if even one reflection point of a potentially stray orbit falls in the nonlocal zone, its length will shrink to less than $L$, the length at which we localized the wave trace; the negativity of $\mu_2$ will dominate whatever influence $\mu_1$ has on the length functional $\LL$, allowing more freedom with $\mu_1$ to prescribe the curvature jet at reflection points of $\Fm$. Both $\mu_1$ and $\mu_2$ will be smooth.

		\item \textbf{Local deformation.} Without loss of generality, for each $\gamma_u \in \Fm$, we may select a first (marked) reflection point, which we denote by $x_u^1$. Define the set of all first reflection points of orbits in $\Fm$ to be $\d \Fm^1$. In order to simplify the construction of $\mu_1$, we localize its support to a neighborhood $\Fm^1$; the deformation $\mu$ will leave unchanged the boundary of $\Omega_0$ near the remaining $2m(q+q'-2)$ reflection points. This will also simplify the exclusion of stray orbits which reflect in a neighborhood of $\d \Fm$.
	\end{itemize}

	\begin{theo}
		In $\Omega_0 = \E_e$, there exists a $C_0(e)$ such that for any $C \geq C_0$ and $\xi > 1$, one can select the positions of $4m$ orbits and an arbitrarily $C^\infty$-small deformation $\mu_2$, which is identically $0$ on a $\xi$-neighborhood of $\Fm$, such that the following holds: for every $C^\infty$-small $\mu_1$ which is supported on a $C\xi$-neighborhood of $\d \Fm^1$, the $\mu = \mu_1 + \mu_2$ deformed domain $\Omega$ has no stray orbits of length $L$, provided that $\Omega$ and $\Omega_0$ make first order contact ($\mu(s) = \mu'(s) = 0$) only at $\d \Fm^1$ within a $C\xi$-neighborhood of $\d \Fm^1$.
		\label{th4}
	\end{theo}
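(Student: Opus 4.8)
\textbf{Plan of proof for Theorem \ref{th4}.}

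The plan is to quantify the intuition sketched in the ``Nonlocal deformation'' bullet: a stray orbit of length $L$ must have every reflection point outside the $\xi$-neighborhood of $\d\Fm$, yet every orbit of length close to $L$ must reflect somewhere near $\d\Fm$. We therefore split stray orbits into two classes: (a) those with at least one reflection point in the nonlocal zone (the complement of a fixed $\xi$-neighborhood of $\d\Fm$), and (b) those all of whose reflection points lie in the $C\xi$-neighborhood of $\d\Fm^1$ on which $\mu_1$ may be nonzero (together with the untouched neighborhoods of the remaining reflection points, where $\mu\equiv 0$). For class (a) the argument is soft: since $\mu_2<0$ there and bounded away from $0$ by a fixed amount (independent of $\bar\eps,\bar\dt$), any link incident to that point shortens by a definite amount while all other links can lengthen by at most $O(\iota)$ times the number of links; but the number of links $q$ (resp.\ $q'$) of a potential stray orbit of length $\le L+o(1)$ is bounded by the proposition of \cite{kov21} quoted in the excerpt, so for $\iota$ small compared to the fixed gain from $\mu_2$, the total length drops strictly below $L$. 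This is where $C_0(e)$ enters: it must dominate $p_0$ from that proposition so that the bookkeeping closes.

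For class (b) the point is purely combinatorial and is exactly what ``correctly selected'' (Definition \ref{def62}, Step 1 of the outline) is designed to guarantee. A stray orbit all of whose bounces lie in $\bigcup_i(x_i-C\xi,x_i+C\xi)$ induces, by recording which of the $2m(q+q')$ intervals each successive bounce falls into, a combinatorial ``itinerary.'' The key geometric input is the monotone twist condition together with convexity: once two consecutive bounces are pinned to two small arcs, the direction of the outgoing ray is pinned to an interval of size $O(\xi)$, hence the third bounce is pinned, and so on; thus an itinerary determines the orbit up to $O(\xi)$, and for $\xi$ small the only closed orbit with a given itinerary near $\d\Fm^1$ is one of the $\gamma_u\in\Fm$ themselves (which are explicitly excluded since they are in $\Fm$, not stray), \emph{unless} the itinerary is ``degenerate'' in a way that correct selection rules out --- e.g.\ two selected reflection points coinciding or being collinear with a third, or two of the $4m$ orbits sharing a reflection point. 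Correct selection, being an open full-measure condition by Proposition \ref{Condition1}, lets us choose the $4m$ orbit positions so that no such coincidence occurs; combined with the first-order-contact hypothesis (contact only at $\d\Fm^1$ within the $C\xi$-neighborhood), this forces any class-(b) stray orbit to actually be one of the preserved orbits, a contradiction. The ``only at $\d\Fm^1$'' clause is what prevents a stray orbit from hiding by reflecting off a newly-created tangency of $\mu_1$ with $\Omega_0$.

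The plan for assembling these: first fix $e$ and invoke \cite{kov21} to get $p_0(e)$, which sets $C_0(e)$ and an a priori bound $q,q'\le Q(e,L)$ on the bounce numbers of any orbit of length $\le L$ plus a small margin; second, choose a correctly selected family $\Fm$ of $4m$ orbits using Proposition \ref{Condition1}, with all $2m(q+q')$ reflection points pairwise at distance $\gg\xi$ and in ``general position'' in the precise sense of Definition \ref{def62}; third, construct $\mu_2$ supported in the nonlocal zone, strictly negative there, $C^\infty$-small, convexity-preserving, and vanishing to infinite order on the $\xi$-neighborhood of $\d\Fm$ --- this is a routine partition-of-unity / bump-function construction and also yields Condition 6 (perimeter strictly decreases); fourth, run the class-(a) length estimate and the class-(b) combinatorial argument as above to conclude that for \emph{any} admissible $\mu_1$ the deformed $\Omega$ has no stray orbit of length $L$, and that $L$ remains isolated in the length spectrum (the same two estimates show no orbit length accumulates at $L$ from stray orbits, while the surviving orbits in $\Fm$ all have length exactly $L$). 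I expect the main obstacle to be the class-(b) step: making rigorous the claim that an itinerary in $C\xi$-neighborhoods of the selected points, compatible with the twist map and with closing up, forces the orbit to be one of the $\gamma_u$ --- this requires a careful quantitative use of the twist condition (a Lipschitz bound for the billiard map in the $(s,\sigma)$ coordinates on the relevant compact part of phase space, away from the glancing set) and a precise statement of which combinatorial degeneracies ``correctly selected'' excludes, so that Definition \ref{def62} and Theorem \ref{th4} are genuinely matched. The class-(a) estimate, by contrast, should be straightforward once the bound on bounce numbers is in hand.
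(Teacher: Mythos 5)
Your overall decomposition (orbits with a bounce in the nonlocal zone versus orbits whose bounces all hug $\d \Fm$) matches the paper's strategy, and your class-(a) step is in the right spirit, though as written it is not yet a proof: you need the quantitative input the paper gets from Lemma \ref{lema63}, namely that any stray orbit in $\Omega$ is $O(\iota)$-close to a length-$L$ orbit of the ellipse and that its length is $L + 2\sum_i \mu(x_i)\cos\theta_i + o(\iota)$ as in \eqref{eq612}; a crude ``one link shortens by a definite amount, the others lengthen by at most $O(\iota)$ per link'' bookkeeping does not close, since gain and loss are both of size $\iota$ and there is no reference orbit to compare against until that lemma is in place. Moreover this closeness statement genuinely fails for the bouncing-ball orbit along the minor axis (type-2 degeneracy, $DB_{\Omega_0}^q=\Id$, closeness only $O(\sqrt{\iota})$), which is why the paper imposes Condition 2 of Definition \ref{def62} and makes the deformation a homothety of the ellipse near the minor axis; your proposal does not address this case at all.

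The serious gap is in your class (b). Your mechanism --- an itinerary in $C\xi$-neighborhoods of the selected reflection points plus the twist condition ``pins'' the orbit, so the only closed orbit with that itinerary is $\gamma_u$ itself --- is false precisely in the situation at hand: in the unperturbed ellipse each $\gamma_u$ sits in a one-parameter family of closed orbits of the same length, same itinerary, and arbitrarily close reflection points (rotation along the caustic), so the dynamics transverse to nothing is hyperbolic there and no Lipschitz/contraction argument in $\xi$ can give uniqueness; after the perturbation, nearby closed orbits generically do exist (pairs are born from the degenerate family), and the issue is whether any of them has length exactly $L$, which depends on the fine structure of $\mu$ near $\d\gamma_u$, not on combinatorics. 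This is exactly where the paper does the real work: the nonlocal part $\mu_2$ is shaped in action-angle coordinates (the $\hat\xi$, $\tfrac43\hat\xi$, $\tfrac53\hat\xi$, $2\hat\xi$ zones) so that the first variation \eqref{eq617} is strictly negative for moderately rotated orbits, and for orbits extremely close to $\gamma_u$ one needs the separate lemma with the auxiliary functionals $\mathcal{L}_1,\mathcal{L}_1^0$: a stray orbit there would force $\mu(s_1)=0$ at its first bounce, and then $\mu'(s_1)\neq 0$ (the ``first order contact only at $\d\Fm^1$'' hypothesis) breaks the reflection law. Your proposal invokes that hypothesis but supplies no mechanism by which it acts; also, the role you assign to $C_0(e)$ (dominating $p_0$ from the bounce-number bound) is not how the constant enters --- it calibrates the arclength versus action-angle neighborhoods used in this construction. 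Conditions 1 and 4 of Definition \ref{def62} (no covered orbits, no shared reflection points) are needed not to exclude ``degenerate itineraries'' but to rule out stray orbits that literally persist because the deformation fixes their reflection points; Proposition \ref{Condition1} is what makes Condition 1 achievable, and your proof would still need it for that purpose.
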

	
	We will first construct $\mu_2(s)$ and then apply Theorem \ref{th4} to pick a local deformation $\mu_1$ in Section \ref{LocalDeformation}, which will give the needed $3$-jet at reflection points while simultaneously eliminating stray orbits. Such a pair will give rise to a deformation $\mu$ satisfying the conditions of Definition \ref{def61}.

    \subsection{Preliminaries}

	We first tabulate some important properties of stray orbits which will be used throughout this section. Assume that a stray orbit of length $L$ exists and is of type $(p'',q'')$. We first bound $p''$ and $q''$:
	\begin{lemm}
		There exist $p_0$ and $q_0$ such that for every $C^\infty$-small perimeter decreasing deformation $\Omega$ of $\Omega_0$, the length $L$ cannot be achieved by orbits of type $(p'',q'')$ whenever $p'' > p_0$ or $q''>q_0$.
		\label{lema61}
	\end{lemm}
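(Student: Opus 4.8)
The plan is to bound $q''$ first and then $p''$, using the fact that an orbit of length $L$ with many reflections or large winding number must live in a region of phase space where the perimeter of the domain controls its length from below. Start with the winding number. Since the deformation $\Omega$ is $C^\infty$-close to the ellipse $\Omega_0 = \Ee$, we may apply the proposition of \cite{kov21} quoted just before Lemma \ref{lema61}: there is a $p_0 > 0$ (depending only on $\Ee$ and $L$), valid uniformly for all sufficiently small deformations, such that every orbit with $p'' \geq p_0$ and $p''/q'' \leq 1/2$ has minimal length $t_{p'',q''} > L$. By time-reversal symmetry (an orbit of type $(p'',q'')$ with $p''/q'' > 1/2$ is the reversal of one with rotation number $(q''-p'')/q'' \leq 1/2$), this handles all orbits with winding number $\min(p'', q''-p'')\geq p_0$. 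So we may assume $p'' < p_0$, i.e.\ the winding number is bounded.

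Next I would bound $q''$. Fix a winding number $p'' < p_0$; I claim that for such $p''$, orbits with large bounce number $q''$ are forced to be too short. This is exactly the Marvizi--Melrose clustering \eqref{eq38}: for the ellipse, $T_{p'',q''} = t_{p'',q''} \to p'' \ell_0$ from below as $q'' \to \infty$, where $\ell_0 = |\d \Ee| = \beta'(0)$, and in fact $T_{p'',q''} \le p''\ell_0$ with $p''\ell_0 - T_{p'',q''} = O_{p''}(q''^{-2})$ by concavity of Mather's $\beta$-function. Since there are only finitely many values $p'' \in \{1, \dots, p_0-1\}$ to consider, choose $q_0$ large enough that $T_{p'',q''}(\Omega_0) < L - c$ for some fixed $c>0$ and all $q'' \ge q_0$, $p'' < p_0$ — this is possible because $L$ is a length realized by type $(p,q)$ and $(p',q')$ orbits, so $L$ cannot equal $p''\ell_0$ for the integer $p'' < p_0$ unless $(p'',q'')$ is among finitely many exceptional resonant pairs, and even then only the limiting value $p''\ell_0$ is excluded, not the actual lengths $T_{p'',q''}$ which lie strictly below. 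Then for any perimeter-\emph{decreasing} deformation $\Omega$, the maximal length over type-$(p'',q'')$ orbits in $\Omega$ is at most $T_{p'',q''}(\Omega) \le q'' \beta'_{\Omega}(0)\cdot(p''/q'') \le p''|\d\Omega| < p''|\d\Omega_0| = p''\ell_0$; more carefully, since $T_{p'',q''}$ depends continuously on the domain in $C^\infty$ and perimeter decrease only pushes lengths down, one gets $T_{p'',q''}(\Omega) \le T_{p'',q''}(\Omega_0) + o(1) < L$ uniformly for $C^\infty$-small deformations. Hence no type-$(p'',q'')$ orbit with $q'' \ge q_0$ can have length $L$.

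The main obstacle is the bound on $q''$ for the finitely many small winding numbers: one must verify that $L$ is genuinely separated from each accumulation value $p''\ell_0$, $1 \le p'' < p_0$, rather than lying on top of one of them. Since $L$ itself equals $q\beta(p/q) = q'\beta(p'/q')$ for the resonant caustic data, and $\beta$ is strictly concave with $\beta(\omega) < \ell_0\omega$ for $\omega \in (0,1/2]$, we have $L < p\ell_0$ and $L < p'\ell_0$; more to the point, for a \emph{fixed} $p''$ the quantity $p''\ell_0$ is a single number, so either $L \ne p''\ell_0$ (and we win by separation) or $L = p''\ell_0$ for some $p'' < p_0$, in which case we invoke that actual orbit lengths $T_{p'',q''}(\Omega_0)$ are strictly below the limit $p''\ell_0 = L$, so only finitely many $q''$ give lengths within distance $c$ of $L$, and those are absorbed into $q_0$. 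Setting $q_0$ to be the maximum of these finitely many thresholds over $p'' \in \{1,\dots,p_0-1\}$, together with the uniform validity of the $p_0$ bound from \cite{kov21} under small deformation, completes the proof. I would also remark that convexity and the $C^\infty$-smallness ensure the billiard map of $\Omega$ still satisfies the monotone twist condition, so winding and bounce numbers are well defined and the Marvizi--Melrose expansion applies to $\Omega$ as well, making the continuity-in-domain argument legitimate.
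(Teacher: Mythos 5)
Your overall route is the paper's: bound $p''$ by the quoted proposition from \cite{kov21} (plus time reversal), then bound $q''$ via the Marvizi--Melrose clustering \eqref{eq38} of type-$(p'',q'')$ lengths near $p''|\d\Omega|$ for the finitely many remaining winding numbers. The gap is in the resonant case $L = p''\ell_0$, which is exactly the case the ``perimeter decreasing'' hypothesis exists to handle. There your argument --- that the lengths $T_{p'',q''}(\Omega_0)$ are strictly below the limit $p''\ell_0 = L$, ``so only finitely many $q''$ give lengths within distance $c$ of $L$, and those are absorbed into $q_0$'' --- fails twice over: when $L = p''\ell_0$ the lengths $T_{p'',q''}(\Omega_0)$ accumulate at $L$, so \emph{infinitely} many $q''$ lie within any distance $c$ of $L$; and strict inequality in the unperturbed ellipse does not transfer to $\Omega$, because the gap $L - T_{p'',q''}(\Omega_0) = O_{p''}(q''^{-2})$ shrinks to zero while the effect of a fixed (however $C^\infty$-small) deformation on these lengths does not decay in $q''$. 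So for $q''$ large relative to the size of the deformation, nothing in that sentence prevents a type-$(p'',q'')$ orbit of $\Omega$ from having length exactly $L$.

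The correct treatment of this case is the one you half-write in the separated case and then do not invoke where it is needed: in the \emph{deformed} domain, concavity of Mather's $\beta$ with $\beta_\Omega(0)=0$ and $\beta_\Omega'(0) = |\d\Omega|$ gives $T_{p'',q''}(\Omega) = q''\,\beta_\Omega(p''/q'') < p''|\d\Omega|$, and the strict perimeter decrease gives $p''|\d\Omega| < p''\ell_0 = L$; equivalently, as the paper argues, $c_{p'',1}<0$ forces the cluster to accumulate at $p''|\d\Omega|$ \emph{from below}, and $p''|\d\Omega| < L$. With that substitution your proof coincides with the paper's. A minor further point: your framing ``large $q''$ forces the orbit to be too short'' is only accurate when $p''\ell_0 \le L$; in the non-resonant case with $p''\ell_0 > L$ the large-$q''$ orbits are too long rather than too short, which is harmless since all you need there is separation of the cluster from $L$, uniformly over $C^\infty$-small deformations.
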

	
	\begin{proof}
		From Proposition \ref{prop32}, we know that $p''$ is bounded by a fixed $p_0$. To bound $q''$, we use \eqref{eq38} and Lemma 8.7 in \cite{kov21}. Since $p''$ is already bounded, all orbits of type $(p,q)$ with sufficiently large $q$ belong to a small neighborhood of $p | \d \Omega|$. If $L \neq p |\d \Omega_0|$ for some $p$, then $L \neq p |\d \Omega|$, since $|\d \Omega|$ and $c_{p, 1}$ (the first Marvizi-Melrose invariant, cf. equation \eqref{eq38}) are continuous with respect to $C^\infty$ deformations. If $L = p|\d \Omega|$ for some $p$, we require our deformation to decrease the perimeter of $\Omega_0$. In that case, $p |\d \Omega| < L$ and since $c_{p, 1} < 0$, the lengths of type $(p,q)$ orbits accumulate at $|\d \Omega|$ from below. In particular, they do not coincide with $L$.
	\end{proof}
	
	Throughout the rest of the paper, we will denote the diffeomorphism mapping $\d \Omega_0 \to \d \Omega$ by
	\begin{align*}
		\Phi_\mu(x) = x + \mu_{\bar{\eps}, \bar{\dt}}(x) n(x) \in \d \Omega,
	\end{align*}
	and its induced product map by
	\begin{align}\label{diffeo}
		\Phi_{\mu, q}:=\Phi_\mu \underbrace{\times \cdots \times}_{q \text{ times}} \Phi_\mu : \d \Omega_0^q \to \d \Omega^q.
	\end{align}
	By an abuse of notation, we will sometimes write $\Phi_{\mu,q}$ with an argument in either arclength or action angle coordinates, as opposed to Euclidean coordinates on $\R^2$.\\
    \\
	It follows from Lemma \ref{lema61} that lengths of orbits in $\Omega$ are close to lengths in $\Omega_0$:
	\begin{prop}[\cite{Koval}]
		Let $L_{p, q}(\Omega) \in \lsp_{p, q}(\Omega)$, where $\Omega$ is a $C^{\infty}$-small normal deformation of the ellipse $\Omega_0 = \Ee$ (Condition 1 in Definition \ref{def61}). Then, there exists $L_{p, q}(\Omega_0) \in \lsp_{p, q}(\Omega_0)$, such that:
		\begin{equation}
			|L_{p, q}(\Omega) - L_{p, q}(\Omega_0)| = o_{p, q, e}(1),
		\end{equation} 
		with respect to the $C^\infty$ distance on boundary curvatures. Moreover, for every type $(p, q)$ orbit $\gamma'$ in $\Omega$, there exists a type $(p,q)$ orbit $\gamma$ in $\Omega_0 = \Ee$ such that $\Phi_{\mu, q}^{-1} (\d \gamma')$ is $o_{p, q, \Omega_0}(1)$-close to $\gamma$ with respect to the $C^\infty$ topology on $\mu$.
		\label{prop61}
	\end{prop}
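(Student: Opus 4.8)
\textbf{Proof proposal for Proposition \ref{prop61}.}

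The plan is to bootstrap from Lemma \ref{lema61}, which already confines the relevant orbits to bounded winding and bounce numbers $(p,q)$ with $p \le p_0$, $q \le q_0$. Since only finitely many types $(p,q)$ are in play, it suffices to prove the statement for each fixed $(p,q)$ separately, with all estimates allowed to depend on $p$, $q$, and the unperturbed ellipse $\Omega_0 = \Ee$. The underlying principle is that the length functional $\LL_\Omega$ on the configuration torus $\partial\Omega^q$ depends continuously (indeed smoothly) on the boundary $\partial\Omega$, hence on the normal deformation $\mu$, in the $C^\infty$ topology; when $\mu \to 0$ in $C^\infty$, the functional $\LL_\Omega \circ \Phi_{\mu,q}$ converges in $C^\infty(\partial\Omega_0^q)$ to $\LL_{\Omega_0}$. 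The two assertions — convergence of critical values and convergence of critical points (up to the identification $\Phi_{\mu,q}$) — are then consequences of the stability of critical points and values under $C^\infty$-small perturbations of a functional, once one controls the geometry of the critical set of $\LL_{\Omega_0}$.

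First I would make precise the continuity of the length functional: writing $\LL_\Omega(\Phi_{\mu,q}(S)) = \sum_i |\Phi_\mu(x_i(S)) - \Phi_\mu(x_{i+1}(S))|$, each summand is a smooth function of the configuration and of $\mu$ (the only possible singularity, coinciding consecutive points, does not occur near genuine billiard configurations because consecutive reflection points of a type $(p,q)$ orbit with $q \ge 2$ are uniformly separated). Hence $\|\LL_\Omega \circ \Phi_{\mu,q} - \LL_{\Omega_0}\|_{C^{k}(K)} \to 0$ as $\|\mu\|_{C^{k+1}} \to 0$ on any compact $K \subset \partial\Omega_0^q$ avoiding the diagonal, for every $k$. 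Next, for the critical value statement: given a type $(p,q)$ orbit $\gamma'$ in $\Omega$, the pulled-back configuration $\Phi_{\mu,q}^{-1}(\partial\gamma')$ is a critical point of $\LL_\Omega \circ \Phi_{\mu,q}$; since this functional is $C^1$-close to $\LL_{\Omega_0}$ and the latter is a proper (continuous on a compact space) Morse--Bott-type functional, standard perturbation theory gives a critical point $S_\gamma$ of $\LL_{\Omega_0}$ within $o(1)$ of $\Phi_{\mu,q}^{-1}(\partial\gamma')$, and then $|\LL_\Omega(\partial\gamma') - \LL_{\Omega_0}(S_\gamma)| = o(1)$ by $C^0$-closeness. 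One must check that the nearby critical point of $\LL_{\Omega_0}$ is again of type $(p,q)$: this is a homotopy-invariance argument, since the winding and bounce numbers of a periodic orbit are locally constant along continuous families of billiard configurations, and $\Phi_{\mu,q}$ preserves these combinatorial data.

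The main obstacle is the presence of the degenerate critical manifolds of $\LL_{\Omega_0}$: in the ellipse, type $(p,q)$ orbits come in one-parameter families (tangent to a confocal caustic), so $\LL_{\Omega_0}$ is not Morse, and a $C^\infty$-small perturbation can move a critical point of $\Omega$ an arbitrary distance \emph{along} the degenerate direction while staying $o(1)$-close in the transverse directions — so one cannot expect $\Phi_{\mu,q}^{-1}(\partial\gamma')$ to be close to any \emph{fixed} orbit of $\Omega_0$, only to \emph{some} member $\gamma$ of the caustic family, which is exactly what the statement claims (``there exists a type $(p,q)$ orbit $\gamma$''). To handle this cleanly I would use the Morse--Bott structure established implicitly in Proposition \ref{prop32} and Proposition \ref{DetPoincare}: the critical set of $\LL_{\Omega_0}$ in the type $(p,q)$ stratum is a compact $1$-manifold (a circle) along which the Hessian has a one-dimensional kernel and is otherwise nondegenerate (negative definite on the normal bundle, by Proposition \ref{prop32}). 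For such a clean critical manifold, the normal-form/implicit-function argument gives a tubular neighborhood in which any $C^1$-small perturbation has its critical points confined to an $o(1)$-neighborhood of the manifold; projecting $\Phi_{\mu,q}^{-1}(\partial\gamma')$ onto the manifold produces the desired $\gamma$, and reading off the length along the manifold (where $\LL_{\Omega_0}$ is constant, equal to $L_{p,q}(\Omega_0)$) gives the value estimate. The only remaining point is to invoke Lemma \ref{lema61} once more to guarantee that the finitely many types exhaust all stray orbits of length $L$, so that the $o_{p,q,e}(1)$ bounds can be taken uniform over the finite index set. Since this is a citation to \cite{Koval}, I would present the above as the structure of that argument and refer there for the quantitative details.
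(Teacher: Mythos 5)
First, a point of orientation: the paper does not prove Proposition \ref{prop61} at all — it is imported from \cite{Koval} (note the citation in the proposition header), and the only in-text commentary is the paragraph after the statement about making the $o_{p,q,\Omega_0}(1)$ bounds uniform via Lemma \ref{lema61}. So there is no in-paper proof to compare against; what can be compared is your sketch against the way the paper actually uses this result and against its quantitative sibling, Lemma \ref{lema63}, whose proof is given. In that light, your overall strategy — reduce to finitely many types $(p,q)$ via Lemma \ref{lema61}, observe that $\LL_\Omega \circ \Phi_{\mu,q} \to \LL_{\Omega_0}$ in $C^k$ on compact sets away from the diagonal, and then push critical points of the perturbed functional onto the critical set of $\LL_{\Omega_0}$ — is the right spirit and is consistent with the machinery the paper uses elsewhere (billiard-map closeness as in \eqref{eq613}, variational characterization of orbits).

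There are, however, two genuine soft spots. First, your key structural claim, that the type-$(p,q)$ critical set of $\LL_{\Omega_0}$ is a clean (Morse--Bott) circle with one-dimensional Hessian kernel, is only correct for $p/q<1/2$, where all such orbits are tangent to confocal ellipses and Proposition \ref{prop32} applies. For $p/q=1/2$ the stratum also contains the libration families around hyperbolic caustics (not covered by Proposition \ref{prop32}) and the two isolated bouncing-ball orbits; at the minor-axis orbit the paper explicitly notes (remark after Lemma \ref{lema63}) that an iterate of the linearized billiard map can be the identity (type $2$ degeneracy), so there is no clean normal form there — this is precisely why the quantitative Lemma \ref{lema63} excludes the minor axis. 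Fortunately, for the merely qualitative $o_{p,q,\Omega_0}(1)$ statement you do not need Morse--Bott structure at all: compactness of $\T_\ell^q$ plus $C^1$ convergence of the functionals already forces critical points of the perturbed functional to accumulate on the critical set of $\LL_{\Omega_0}$, so you should drop the clean-manifold step rather than rely on it where it fails. Second, your parenthetical assertion that consecutive reflection points of a type-$(p,q)$ orbit are uniformly separated is an a priori fact only for orbits of the ellipse $\Omega_0$; for orbits of the perturbed domain $\Omega$ it is part of what must be proved (one has to rule out degenerating links and near-glancing limits before the compactness argument identifies the limit as a genuine $(p,q)$ orbit satisfying the reflection law at every point). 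This is where the billiard-map estimates of \cite{Koval} (used in the paper as \eqref{eq613}, with constants depending on the distance to tangential angles) enter, and your sketch currently assumes it rather than derives it.
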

	
	Since $p$ and $q$ are already bounded by Lemma \ref{lema61}, the $o_{p,q,\Omega_0}(1)$ estimates above can be made uniform. Note also that the type $(p,q)$ part of the length spectrum of an ellipse is finite for $p, q$ bounded. Hence there can be no accumulation at the specified length $L$ and an orbit of length $L$ in $\Omega$ must be close to an orbit of length $L$ in the ellipse.
    \\
    \\
    Thus, any stray orbit in $\Omega$ will be close to some orbit in $\Omega_0$ of length $L$. As long as we ensure that the deformation either decreases or increases their lengths, there will be no stray orbits in $\Omega$.

    \begin{rema}
		 In Definition \ref{def61}, we also required that $L$ be isolated in the length spectrum. Theorem \ref{th4} does not address this directly. However, if $L$ were an accumulation point in the length spectrum, Lemma would \ref{lema61} imply that the corresponding orbits would have bounded $p$ and $q$. The arclength coordinates of these trajectories would belong to the compact set $\cup_{q \leq q_0} \T_\ell^q$ and hence have a convergent subsequence. By continuity, such a limit point would correspond to a degenerate billiard trajectory of length $L$. As our construction rules out all degenerate trajectories of length $L$, it follows that $L$ must be isolated in the length spectrum.
	\end{rema}
	
	\subsection{Proof of Theorem \ref{th4}}

    \subsubsection{Part 1: orbit selection}
    
    We now select the exact positions of our $4m$ fixed periodic orbits in the ellipse $\Omega_0 = \Ee$. Any caustic provides a one parameter family of possible orbits, but there are additional constraints. For example, suppose that some other orbit (not one of the $4m$ fixed ones) of length $L$ in $\Omega_0$ has reflection points contained in a subset of $\mathcal S$. Since our deformation makes first order contact at $\mathcal{S}$, this stray orbit will persist through any controllable deformation we create. To ensure such problems don't arise, we introduce the following four requirements for orbit selection.
		
	\begin{def1} \label{def62}
		We call a family $\mathcal{F}_m = \left\{\gamma_u\right\}$ of $4m$ orbits in $\Ee$ \textbf{correctly selected} if the following conditions hold:
		\begin{enumerate}
			\item If $\gamma_u$ is selected, then no other orbit $\gamma$ of length $L$ in $\Ee$ can have $\d \gamma \subset \d \gamma_u$.
			\item If $\gamma_u$ is selected, its reflection points do not lie on the minor axis of an ellipse.
			\item If $\gamma_u$ and $\gamma_v$ are selected, their reflection points are disjoint.
			\item If $\gamma_u$ and $\gamma_v$ are selected, there does not exist any orbit of length $L$ in $\Omega_0 = \Ee$ which shares its reflection points with both $\gamma_u$ and $\gamma_v$.
		\end{enumerate}
	\end{def1}
	
	\begin{def1}
		Whenever $\d \gamma \subset \d \gamma'$ for some orbits $\gamma, \gamma'$ (the negation of Condition $1$ above), we say that $\gamma'$ \textbf{covers} $\gamma$.
	\end{def1}
	
	Conditions $1$ and $4$ forbid stray orbits with reflection points in $\mathcal S$. Condition $3$ allows us to change $\mu_1$ independently for each orbit, with no risk of interference from the others. As we shall see, it is harder to prove that bouncing ball orbits along the minor axis do not give rise to stray orbits, so Condition $2$ is useful in the event that the length of the minor axis divides $L$. Note also that Conditions $2$, $3$ and $4$ hold outside of the union of (stratified) submanifolds which have positive codimension in $\d \Omega^{2m(q + q')}$. In particular, they are open and full measure conditions, so they can be easily satisfied. For example, among the one parameter family of orbits tangent to any given caustic, only finitely many other have reflection points on the minor axis. Similarly, after fixing any such chosen orbit, only finitely many others share a common point of reflection.
	\\
	\\
	Note that Conditions $1$ and $4$ in Definition \ref{def62} pertain to \textit{all} orbits in $\Ee$, not just those in $\Fm$. This makes them more difficult to verify than Conditions $2$ and $3$. However, for Condition $4$, we can argue as follows. Consider an orbit $\gamma$ and the collection of all others of length $L$ in $\Ee$ (having any rotation number) which share a common point of reflection with $\gamma$. Lemma \ref{lema61} guarantees that there are at most a finite number of possible rotation numbers, so there are only finitely many orbits.
	\\
	\\
	This type of argument unfortunately tells us nothing about Condition $1$. If one rotates an orbit of type $(p, q)$ and length $L$ around a caustic in $\Ee$, a priori, there may persist another orbit of length $L$ which remains covered by it. We claim that aside from a finite number of exceptions, this cannot happen.
	
	\begin{prop}\label{Condition1}
		There are at most finitely many orbits in the ellipse $\Omega_0 = \Ee$ which violate Condition $1$ in Definition \ref{def62}.
	\end{prop}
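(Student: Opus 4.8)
The plan is to show that if an orbit $\gamma$ of length $L$ covers another orbit $\gamma'$ of length $L$ (i.e.\ $\d\gamma' \subset \d\gamma$), then this imposes a rigid constraint which can only be met for finitely many choices of $\gamma$. First I would set up the combinatorics: if $\gamma$ has type $(p,q)$ and $\gamma'$ has type $(p',q')$ with $\d\gamma'$ a subset of the $q$ reflection points of $\gamma$, then $q' \mid q$ is impossible in general, but more usefully, the reflection points of $\gamma'$ form a sub-configuration that must \emph{itself} be tangent to a (necessarily different) confocal caustic $\Gamma'$, since every periodic billiard orbit in the ellipse is tangent to a confocal conic. So covering forces two confocal caustics $\Gamma \neq \Gamma'$ whose tangent orbits happen to share reflection points \emph{and} have a common length. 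By Lemma \ref{lema61}, the winding/bounce numbers $(p,q)$ and $(p',q')$ of any two orbits of length $L$ in $\Ee$ are bounded by fixed $p_0, q_0$; hence there are only finitely many pairs of types to consider, and within each type the caustic is determined by a single real parameter $\lambda$ (equivalently $k$, or the rotation number) via the length formulas \eqref{lengths of hyperbolic orbits}, \eqref{lengths of elliptic orbits}.

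Next I would fix such a pair of types and rule out, for all but finitely many orbits of type $(p,q)$ in the one-parameter rotating family along $\Gamma$, the possibility that some type $(p',q')$ orbit remains covered. Parametrize the rotating type-$(p,q)$ family by the starting angle $\eta_0$ via \eqref{eq315}; its reflection points are $\varphi(s_i) = \am(4K(k)(\eta_0/2\pi + (i-1)\omega), k)$, $i = 1,\dots,q$. A covered type-$(p',q')$ orbit would have to use $q'$ of these $q$ points, and in particular a \emph{subset} of $q'$ of the elliptic-polar angles $\{\varphi(s_i)\}$ would have to be exactly the reflection-angle set of a genuine $(p',q')$ orbit tangent to $\Gamma'$. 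The angles of such an orbit are an arithmetic progression (mod the relevant period) in the $\am$-variable with step $\omega' = p'/q'$; meanwhile the $\varphi(s_i)$ are an arithmetic progression with step $\omega = p/q$ but measured with a \emph{different} modulus $K(k)$ versus $K(k')$. Matching these two AP's forces an algebraic relation between $\eta_0$, $k$, and $k'$. Crucially this is an analytic (indeed, algebraic after clearing elliptic integrals) equation in $\eta_0$ that is \emph{not} identically satisfied — one checks nonvanishing at, say, a configuration where the two caustics are widely separated, exactly as in the proof of Lemma \ref{LowerBound} where a degenerate/explicit value is computed. Hence for each admissible pair of types, the bad set of $\eta_0$ is the zero set of a nonzero analytic function on a compact interval, so it is finite; summing over the finitely many type-pairs gives finiteness overall.

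The main obstacle I anticipate is ruling out the \emph{identically satisfied} alternative: a priori, one could imagine a pair of confocal caustics in $\Ee$ so perfectly resonant that every rotated $(p,q)$-orbit genuinely covers a $(p',q')$-orbit, which would make the bad set an entire interval rather than finite. This is the same dichotomy (locally finite vs.\ coinciding with the whole domain) that appears in Remark \ref{resonancezeormeasure}. To exclude it I would argue that covering for \emph{all} $\eta_0$ would force a functional identity among the $\am$-values that, when expanded near $\eta_0$ corresponding to an axis orbit (or as $e \to 1$, where $\beta$ and the length formulas have clean expansions), contradicts the known Taylor data — e.g.\ it would force $\omega/\omega'$ and $K(k)/K(k')$ to satisfy incompatible rationality/transcendence constraints, or it would produce a second invariant curve through generic phase-space points, contradicting the structure of the integrable foliation in Figure \ref{phase space}. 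A secondary technical nuisance is bookkeeping the hyperbolic-caustic case, where \eqref{eq316} introduces the extra $\pi i$ shift and the modulus $\wt K$; the argument is structurally identical but the modular arithmetic (residues mod $\pi\Z$ vs.\ $2\pi\Z$) needs care. Once the identity alternative is dispatched, everything else reduces to "zero set of a nonzero real-analytic function on a compact set is finite," which is routine.
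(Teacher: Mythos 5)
Your overall skeleton matches the paper's: reduce to finitely many type pairs via Lemma \ref{lema61}, rotate the covering orbit along its caustic, observe that the coordinates of the relevant reflection points depend analytically on the rotation parameter (via \eqref{eq315}--\eqref{eq316}), and conclude that coincidences form either a locally finite set or persist identically, so that the whole problem reduces to excluding the identity alternative. But that exclusion is precisely where your proposal has a genuine gap: you only list candidate strategies without carrying any of them out, and the ones you name do not work as stated. You cannot "check nonvanishing at a configuration where the two caustics are widely separated," because the pair of caustics is rigidly fixed by the ellipse, the length $L$ and the types $(p,q)$, $(p',q')$ --- there is no freedom to move them to a convenient configuration; the non-identity must be established for the given pair. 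Likewise, persistent covering would not "produce a second invariant curve through generic phase-space points": the two orbits share base points on $\partial\Omega_0$ but have different reflection angles there, so their phase-space points are distinct and lie on distinct invariant curves with no contradiction to the integrable foliation. The paper replaces these gestures with two concrete arguments: when the covered orbit is tangent to a hyperbolic caustic, it shows $\frac{d}{d\phi_1}\phi_2''(\phi_1) \to \infty$ as $\phi_1$ approaches the tangency value $\arcsin k''^{-1}$ while the covering orbit's coordinate $\phi_{i+1}(\phi_1)$ has bounded derivative; when the covered orbit is tangent to an elliptic caustic, it iterates the coincidence to force the rotation-number relation $i\omega = \omega'' \bmod 1$, evaluates at $\phi_1 = \eta_0 = 0$ to get $F(\phi,k)/K(k) = F(\phi,k'')/K(k'')$, and contradicts this by an explicit computation showing $F(\phi,k)/K(k)$ is strictly decreasing in $k$ for $\phi \in (0,\pi/2)$.

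A second, related omission: in the hyperbolic-caustic case the function $\phi_2''(\phi_1)$ is only defined (and analytic) on the open interval $(-\arcsin k''^{-1}, \arcsin k''^{-1})$ and is singular at its endpoints, so "zero set of a nonzero analytic function on a compact interval is finite" does not apply directly --- coincidence points could a priori accumulate at the singular endpoints. The paper's derivative blow-up estimate \eqref{eq65} is exactly what rules out such accumulation (the derivative of $\phi_{i+1}$ stays bounded away from the unbounded derivative of $\phi_2''$ near the endpoint), and your proposal does not address this. Without these two ingredients the proof is incomplete at its central step.
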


	\begin{proof}
		The negation of Condition 1 is that for a family of $4m$ orbits of length $L$, there exists an orbit external to $\Fm$ which shares a common reflection point with one in $\Fm$. Note that by Lemma \ref{lema61}, it suffices to prove the lemma for orbits of a fixed type, say type-$(p,q)$. Such an orbit will be tangent to either an elliptic or hyperbolic caustic, pass through the focal points or be a bouncing ball orbit along the minor axis. The only periodic orbits through focal points are bouncing ball orbits along the major axis. It is clear that at most finitely many bouncing ball orbits along either axis can have length $L$.
		\\
		\\
        %If $\mathcal{F}_m$ contains an orbit with a reflection point on one of the axes and $\omega = 1/2$,
		Assume that some type-$(p, q)$ orbit $\gamma$ fully covers another orbit $\zeta$ of type $(p'',q'')$. Denote their corresponding caustics by $\Gamma_\gamma$ and $\Gamma_\zeta$. Without loss of generality, assume that the first points of $\gamma$ and $\zeta$ coincide and denote their corresponding elliptical polar coordinate by $\phi_1$. The second reflection point of $\d \zeta$ coincides with the $i+1$-st of $\d \gamma$ for some $i$, and we denote the corresponding coordinate by $\phi_{i+1}$. 
		\\
		\\
		We now rotate $\gamma$ along $\Gamma_\gamma$ by changing $\phi_1$ and ask whether the covering will persist. By formulae \eqref{eq315} and \eqref{eq316}, $\phi_{i+1}$ depends analytically on $\varphi_1$. Similarly, the elliptical polar coordinate of the second reflection point of $\zeta$ (call it ${\phi}_2''$) depends analytically on $\varphi_1$ when rotated simultaneously along $\Gamma_\zeta$. Hence, their reflection points coincide on the zero set of a real analytic function -- that is, either everywhere on the complex domain or only on a locally finite set.
				\begin{figure}
						\centering
						\includegraphics[width=8cm]{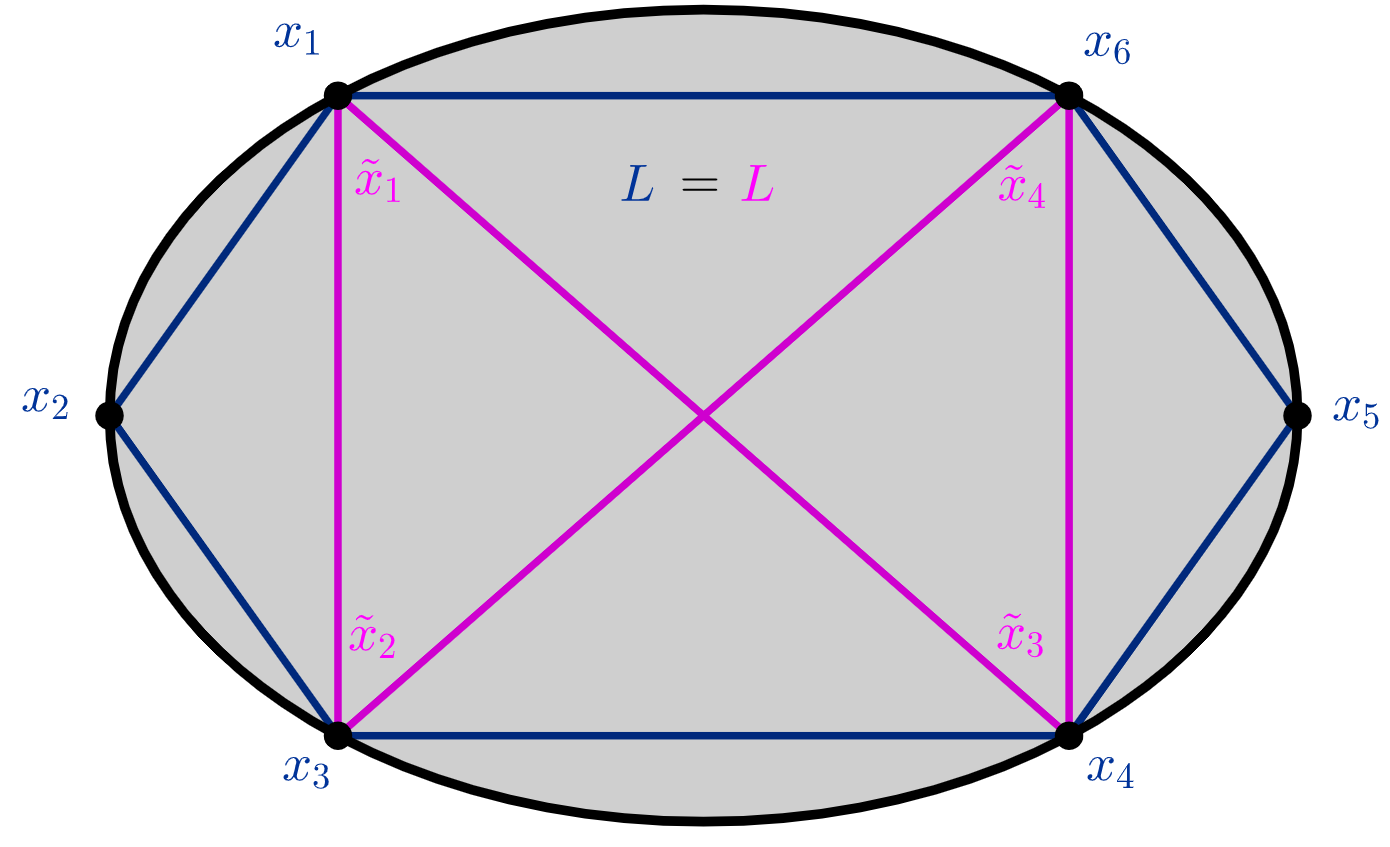}
						\caption{A potential example of an orbit, not satisfying the first selection condition. In this case, the selected orbit (in blue) has $p = 1$ and $q = 6$, whereas the stray orbit is tangent to a hyperbolic caustic, with $\tilde{p}'' = 1$ and $q'' = 4$. The figure is for illustration purposes only and is not drawn to scale -- the actual orbits in this ellipse may have different lengths and/or reflection points.}
						\label{fig:firstcondition}
					\end{figure}
		We focus on a single orbit $\gamma$ which is tangent to an elliptic caustic; in this case, $\phi_{i+1}$ extends to an analytic function of $\phi_1$ on a complex strip containing the real axis. The same holds for ${\phi}_2''$, if $\zeta$ is also tangent to a confocal ellipse. However, if $\zeta$ is tangent to a confocal hyperbola and $x_1$ is on the lower side of an ellipse, ${\phi}_2''(\phi_1)$ is defined only for $\phi_1 \in [-\arcsin k''^{-1}, \arcsin k''^{-1}]$, where
		\begin{align*}
			k'' = \frac{e}{\sqrt{1- (\lambda'')^2}},
		\end{align*}
		with $\lambda''$ being the parameter of the corresponding conic section (cf. equation \eqref{confocal}).
		\\
		\\
		We address the latter case first. Assume $\phi_1 \to \arcsin k''$ from below so that ${\phi}_2''$ approaches some limit, corresponding the elliptical polar coordinate of the reflection point on the upper half of the $\Omega_0$. We claim that
		\begin{equation}
			\frac{d}{d\phi_1} {\phi_2}''(\phi_1) \rightarrow \infty, \; \; \phi_1 \rightarrow \arcsin k''^{-1}.
			\label{eq65}
		\end{equation}
		Since $\varphi_1$ is near the singularity, formula \eqref{eq316} implies that $\varphi_1'(\eta_0) \to 0$ and hence, $\eta_0'(\phi_1) \to \infty$.
		\\
		\\
		Since
        \begin{align*}
            \frac{d}{d \phi} \phi_2'' = \frac{d \phi_2'}{d \eta}(\eta_1) \frac{d \eta}{d \phi}(\phi_1),
        \end{align*}
        it remains to show that the former is bounded away from $0$. If ${\phi}_2''$ tends to a limit in $(\pi - \arcsin k''^{-1}, \pi + \arcsin k''^{-1})$, then boundedness away from zero is clear, since those points are nonsingular. We claim that ${\phi_2}''$ cannot approach the singular points $\pi \pm \arcsin k''^{-1}$; otherwise, in the limit, the link between $x_1$ to $\tilde{x}_2$ will intersect the hyperbolic caustic at $2$ points ($x_1$ and $\tilde{x}_2$) ,which contradicts tangency. Hence, \eqref{eq65} follows.
		\\
		\\
		Since the corresponding function is analytic near $\arcsin k''^{-1}$, we know that $\frac{d \phi_{i+1}}{d \phi_1}$ is bounded there. Close to $\arcsin k''^{-1}$, the functions ${\phi}_2''(\phi_1)$ and $\phi_{i+1}(\phi_1)$ have different derivatives and thus coincide on at most a locally finite set. A priori, this set may have $\pm \arcsin k''^{-1}$ as a limit point. However, the derivative of $\phi_{i+1}$ is bounded away from zero, whereas that of ${\phi}_2''$ approaches zero, so points of coincidence do not accumulate at $\arcsin k''^{-1}$. Thus, there can be only finitely many stray orbits of length $L$ which are tangent to a hyperbolic caustic and remain covered by an another orbit belonging to the family $\Fm$ of selected orbits.
  \\
  \\
        \begin{figure}
				\centering
				\includegraphics[width=7cm]{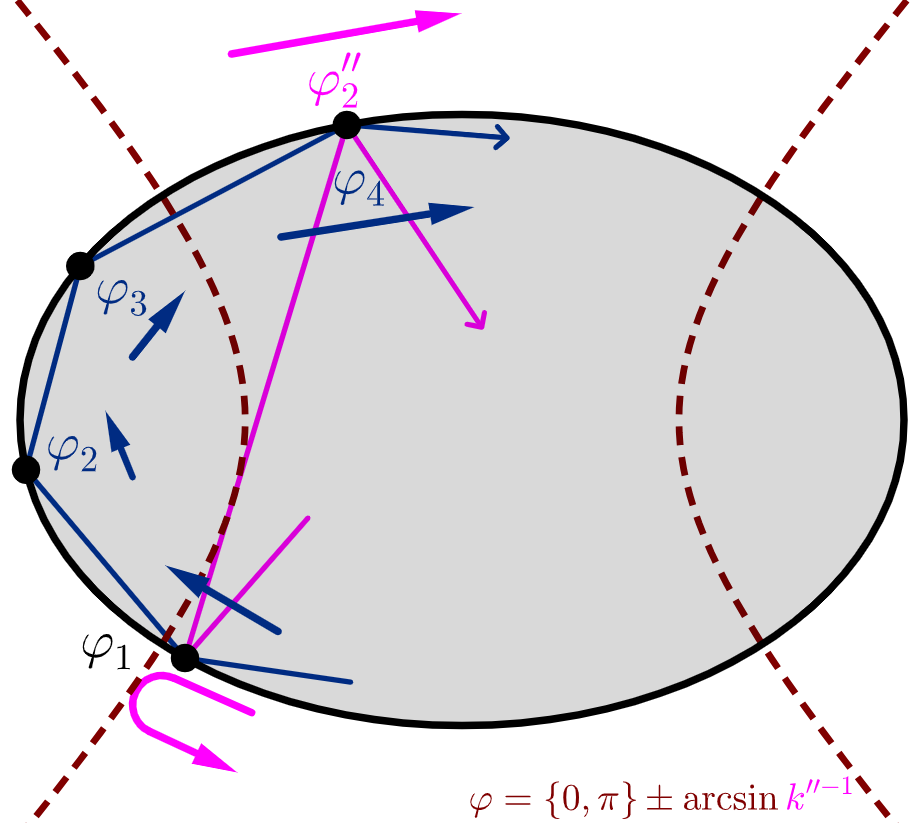}
				\caption{As we rotate the blue orbit $\gamma$ of an elliptic caustic and the stray pink orbit $\zeta$ of a hyperbolic caustic, we see that the covering cannot persist as $\varphi_1$ is close to the limiting hyperbola. The speed and trajectory of reflection points under rotations are indicated with the arrows. For $\zeta$, the first reflection point is limited by the hyperbola, so it will decelerate and eventually reverse direction. The second reflection point is away from the hyperbola, so it rotates rapidly, thus $\frac{d}{d\phi_1} {\phi_2}''(\phi_1) \rightarrow \infty$. Meanwhile $\gamma$ is unaffected by the hyperbola, leading to bounded derivatives.}
				\label{fig:enter-label13}
			\end{figure}
  We now consider the case when $\zeta$ is also tangent to an elliptic caustic. Assume that for all $\phi_1 \in \mathbb{R}$, we have
		\begin{align*}
			\phi_{i+1}(\phi_1) = {\phi}_{2}''(\phi_1) \mod 2\pi \;\;\Rightarrow\;\; \phi_{i+1}(\phi_1) = {\phi}_{2}''(\phi_1) + 2\pi n.
		\end{align*}
		Iterating this equality $t$ times, we get
		\begin{align*}
			\phi_{it+1}(\phi_1) = {\phi}_{t+1}''(\phi_1) + 2\pi t n.
		\end{align*}
		Substituting $t = q$ gives
		\begin{align*}
			2\pi i p = 2\pi \omega'' q + 2\pi q n\;\; \Rightarrow \;\;  i\omega = \omega'' \mod 1.
		\end{align*}
		We now exploit this relation using \eqref{eq315}. Choose $\phi_1 = \eta_0 = 0$, so that
		\begin{align*}
			\phi_{i+1} = \am \left(4K(k)i\omega, k\right), \; \; \; {\phi}_2'' = \am \left(4K(k'')\omega'', k''\right).
		\end{align*}
		
  Using periodicity of the Jacobi amplitude, we see that
	\begin{equation}
			\am \left(4K(k)\omega'', k\right) = \am \left(4K(k'')\omega'', k''\right) \; \; \Rightarrow \; \; \frac{F({\phi}_2'', k)}{K(k)} = \frac{F({\phi}_2'', k'')}{K(k'')}.
			\label{eq610}
		\end{equation}
		The latter equality will lead to contradiction. For simplicity, we denote ${\phi}_2''$ by $\phi$ and assume $\phi \in (0, \pi/2]$. If $\phi$ were in $(\pi/2, \pi)$, \eqref{eq610} would hold for $\pi - \phi$ instead. If $\pi < \phi< 2\pi$, then the equality would hold for $2\pi - \phi$. Note that $\phi \ne \pi$, since otherwise, the segment of $\zeta$ from $\phi_1$ to $\phi$ would pass through the center of an ellipse and hence intersect the caustic.
        \\
        \\
		In this case, we get a contradiction, since for $\phi \in (0, \pi/2)$,
		\begin{align*}
			\frac{F(\phi, k)}{K(k)}
		\end{align*}
		is a strictly decreasing function in $k$. This can be verified by direct computation:
		\begin{align*}
		\frac{d}{dk} \frac{F(\phi, k)}{K(k)} = & k \int_0^\phi \frac{\sin^2\psi d\psi}{\sqrt{1 - k^2\sin^2\psi}^3} \int_{\phi}^{\pi/2} \frac{d\psi}{\sqrt{1 - k^2\sin^2\psi}}\\
			&- k \int_0^\phi \frac{d\psi}{\sqrt{1 - k^2\sin^2\psi}} \int_{\phi}^{\pi/2} \frac{\sin^2\psi d\psi}{\sqrt{1 - k^2\sin^2\psi}^3}\\
			= & k \int_0^\phi \frac{g(\psi) d\psi}{\sqrt{1 - k^2\sin^2\psi}} \int_{\phi}^{\pi/2} \frac{d\psi}{\sqrt{1 - k^2\sin^2\psi}}\\
			&- k \int_0^\phi \frac{d\psi}{\sqrt{1 - k^2\sin^2\psi}} \int_{\phi}^{\pi/2} \frac{g(\psi) d\psi}{\sqrt{1 - k^2\sin^2\psi}}\\
			< & k \int_0^\phi \frac{g(\phi) d\psi}{\sqrt{1 - k^2\sin^2\psi}} \int_{\phi}^{\pi/2} \frac{d\psi}{\sqrt{1 - k^2\sin^2\psi}}\\
			&- k \int_0^\phi \frac{d\psi}{\sqrt{1 - k^2\sin^2\psi}} \int_{\phi}^{\pi/2} \frac{g(\phi) d\psi}{\sqrt{1 - k^2\sin^2\psi}} = 0,
		\end{align*}
        for strictly increasing $g(\psi) = \frac{\sin^2 \psi}{1 - k^2 
        \sin^2 \psi}$. Unless $\phi = \pi/2$, we get a contradiction. This last case also leads to a contradiction, since instead of considering $\phi_1 = 0$, we could take $\phi_1 + \phi = 0$.
	\end{proof}
	
	Hence, it is possible to choose $4m$ orbits of rotation numbers $p/q$ and $p'/q'$ which satisfy all four conditions in Definition \ref{def62}. 
	
	\subsubsection{Part 2: nonlocal deformation}
	
	We now complete the proof of Theorem \ref{th4} by analyzing $\mu_2$, the nonlocal deformation. It is supported away from the reflection points of all selected orbits in $\mathcal{F}_m$ and hence determines the change in perimeter of \textit{all} periodic orbits in $\Omega$ which do not intersect a small neighborhood of $\d \mathcal{F}_m$. From Proposition \ref{prop61}, we know that periodic points in $\Omega_{\bar{\eps}, \bar{\dt}}$ are close to those in an ellipse. The following lemma provides a more precise estimate of their locations and perimeters:
	
	\begin{lemm}
		Let $q>1$ and $\Omega = \mathcal{E}_e + \mu(s) \Vec{n}(s)$ be a $C^\infty$-small deformation of the ellipse with $||\mu||_{C^{10}} = O(\iota)$ for some auxiliary parameter $\iota$, which is independent of $\bar{\eps}, \bar{\dt}$. Then, every $q$-periodic orbit $\gamma$ which is away from a bouncing ball orbit along the minor axis in $\Omega$ is $O(\iota)$ close to some $q$-periodic orbit $\gamma_0$ in $\Omega_0:$
		\begin{align*}
			\left|\Phi_{\mu, q}^{-1} (\d \gamma) - \d \gamma_0 \right| = O(\iota),
		\end{align*}
		(with the notation in \eqref{diffeo}). Moreover, the length of $\gamma$ is given by
		\begin{equation}
			L_\gamma = L_{\gamma_0} + 2 \sum_{i = 1}^{q} \mu \big( x_i(\gamma_0) \big) \cos \big( \theta_i(\gamma_0)\big) + o(\iota),
			\label{eq612}
		\end{equation}
		where $x_i$ (resp. $\theta_i$) are the points (resp. angles) of the reflection of $\gamma_0$ in $\mathcal{E}_e$.
		\label{lema63}
	\end{lemm}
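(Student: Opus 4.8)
The plan is to realize periodic orbits of both $\Omega$ and $\Omega_0$ as critical points of a single functional on the fixed torus $\T_\ell^q$, and to upgrade the qualitative $o(1)$ closeness of Proposition \ref{prop61} to the linear rate $O(\iota)$ by exploiting the type~$1$ degeneracy of ellipse orbits. First I would transport the length functional of $\Omega$ by the product diffeomorphism $\Phi_{\mu,q}$ of \eqref{diffeo}, setting $\tilde{\mathcal L}_\mu(S) := \mathcal L_\Omega(\Phi_{\mu,q}(S)) = \sum_{i} \big|\Phi_\mu(x_{i+1}(S)) - \Phi_\mu(x_i(S))\big|$ for $S \in \T_\ell^q$, where $x(\cdot)$ is the arclength parametrization of $\partial\Omega_0$. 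Since $\Phi_{\mu,q}$ is a diffeomorphism, $S^\ast := \Phi_{\mu,q}^{-1}(\partial\gamma)$ is a critical point of $\tilde{\mathcal L}_\mu$ whenever $\gamma$ is a periodic orbit in $\Omega$. Writing $\Phi_\mu(x) = x + \mu(x)\, n(x)$ and expanding, $\tilde{\mathcal L}_\mu - \mathcal L_{\Omega_0}$ is smooth in $(S,\mu)$, vanishes at $\mu = 0$, and satisfies $\|\tilde{\mathcal L}_\mu - \mathcal L_{\Omega_0}\|_{C^k(\T_\ell^q)} = O(\|\mu\|_{C^k}) = O(\iota)$ for every $k \le 10$ (link lengths stay bounded below because $q$ is fixed and convexity is preserved).

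Next, given a $q$-periodic orbit $\gamma$ in $\Omega$ that stays away from the minor axis bouncing ball, Proposition \ref{prop61} provides a $q$-periodic orbit $\gamma_0$ in $\Omega_0 = \Ee$ with $S^\ast$ within $o(1)$ of $\partial\gamma_0$. Such a $\gamma_0$ is a rotation tangent to an elliptic caustic, a libration tangent to a hyperbolic caustic, or the bouncing ball along the major axis; the minor axis bouncing ball and its iterates, which are the only type~$2$ degenerate orbits, are excluded by hypothesis. For the major axis orbit, $\partial^2\mathcal L_{\Omega_0}(\partial\gamma_0)$ is invertible (the orbit is hyperbolic, hence nondegenerate), and the quantitative implicit function theorem produces a unique critical point of $\tilde{\mathcal L}_\mu$ within $O(\iota)$ of $\partial\gamma_0$, which must be $S^\ast$. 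For a rotation or libration, $\partial\gamma_0$ lies on a one-parameter critical manifold $\mathcal C$ of $\mathcal L_{\Omega_0}$ (the orbits along a fixed caustic), and by Proposition \ref{prop32} and its libration analogue, $\ker\partial^2\mathcal L_{\Omega_0}$ at each point of $\mathcal C$ is exactly the tangent line to $\mathcal C$; hence $\partial^2\mathcal L_{\Omega_0}$ is nondegenerate transverse to $\mathcal C$, with inverse bounded uniformly over the finitely many compact such manifolds occurring for the fixed period $q$. A Morse--Bott / Lyapunov--Schmidt reduction then applies: decomposing near $\mathcal C$ into tangential and normal directions and projecting $\nabla\tilde{\mathcal L}_\mu(S^\ast) = 0$ onto the normal space gives $\partial^2\mathcal L_{\Omega_0}|_N[n] = O(\iota) + O(|n|^2)$ for the normal displacement $n$ of $S^\ast$ from $\mathcal C$, whence $|n| = O(\iota)$ once $\iota$ is small. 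Choosing $\gamma_0$ to be the orbit of $\mathcal C$ closest to $S^\ast$, we obtain $\big|\Phi_{\mu,q}^{-1}(\partial\gamma) - \partial\gamma_0\big| = O(\iota)$.

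Finally, for \eqref{eq612} I would write $L_\gamma = \tilde{\mathcal L}_\mu(S^\ast)$ and Taylor expand about $\partial\gamma_0$. Because $\mathcal L_{\Omega_0}$ has a critical point at $\partial\gamma_0$ with value $L_{\gamma_0}$ and $\|S^\ast - \partial\gamma_0\| = O(\iota)$, one has $\mathcal L_{\Omega_0}(S^\ast) = L_{\gamma_0} + O(\iota^2)$; and since $\tilde{\mathcal L}_\mu - \mathcal L_{\Omega_0}$ has $C^1$-norm $O(\iota)$, it varies by $O(\iota^2)$ between $S^\ast$ and $\partial\gamma_0$, so $L_\gamma = L_{\gamma_0} + (\tilde{\mathcal L}_\mu - \mathcal L_{\Omega_0})(\partial\gamma_0) + O(\iota^2)$. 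It then remains to evaluate $(\tilde{\mathcal L}_\mu - \mathcal L_{\Omega_0})(\partial\gamma_0) = \sum_i\big(|\Phi_\mu(x_{i+1}^0) - \Phi_\mu(x_i^0)| - |x_{i+1}^0 - x_i^0|\big)$: to first order in $\mu$ the $i$-th link contributes $\mu(x_{i+1}^0)\cos\theta_{i+1}^0 + \mu(x_i^0)\cos\theta_i^0 + O(\|\mu\|_{C^1}^2)$, since the chord $\overline{x_i^0 x_{i+1}^0}$ meets the outward normal at its two endpoints in the respective reflection angles, and summing cyclically counts each vertex twice. This yields $2\sum_i \mu(x_i(\gamma_0))\cos\theta_i(\gamma_0) + O(\iota^2)$, and as $O(\iota^2) = o(\iota)$ this is exactly \eqref{eq612}.

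The main obstacle is the second step: upgrading the qualitative closeness of Proposition \ref{prop61} to the linear rate $O(\iota)$ in the presence of the degeneracy of ellipse orbits. This is precisely where the type~$1$ structure, namely that the transverse Hessian of $\mathcal L_{\Omega_0}$ is nondegenerate along a caustic by Proposition \ref{prop32} and its libration counterpart, and, crucially, the exclusion of the minor axis bouncing ball, near which the transverse Hessian has a two-dimensional kernel and the displacement of a nearby orbit can be of order $\sqrt{\iota}$ rather than $\iota$, are both essential.
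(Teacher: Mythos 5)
Your argument is correct, and it reaches the quantitative $O(\iota)$ estimate by a genuinely different route from the paper. The paper works dynamically: it imports the $C^9$ closeness estimate for the billiard maps $B_\Omega, B_{\Omega_0}$ from \cite{Koval}, takes $\gamma_0$ to be the \emph{closest} $q$-periodic orbit of the ellipse, linearizes the Poincar\'e map to get $v = DB^q_{\Omega_0} v + O(\|v\|^2 + \iota)$, and then argues that if $v \neq O(\iota)$ the displacement must align with the sole eigenvector of the Jordan-block differential along the caustic, contradicting minimality (the major-axis case being killed outright by hyperbolicity). You instead work variationally: pulling $\mathcal L_\Omega$ back to the fixed torus by $\Phi_{\mu,q}$, using only $C^2$-closeness of the functionals near the finitely many compact critical circles, and running a Morse--Bott/Lyapunov--Schmidt reduction transverse to the caustic critical manifold (plus the ordinary quantitative IFT at the nondegenerate major-axis orbit). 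What your route buys is that it bypasses the billiard-map estimate \eqref{eq613} entirely and treats the degenerate and nondegenerate cases uniformly through the transverse Hessian; what it costs is that you must invoke a ``libration analogue'' of Proposition \ref{prop32}, i.e.\ that orbits tangent to hyperbolic caustics are type~$1$ degenerate so the kernel of $\partial^2\mathcal L_{\Omega_0}$ is exactly the tangent to the critical circle. That analogue is not proved in the paper, but it is precisely the fact (``sole eigenvector of $DB^q_{\Omega_0}$'' for elliptic \emph{or} hyperbolic caustics, equivalent via Proposition \ref{DetPoincare}) that the paper's own proof also uses without separate justification, and the paper's remark that the minor-axis bouncing ball is the only type~$2$ degenerate orbit covers it; so this is a point to flag, not a gap. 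Your treatment of the length expansion (Taylor expansion at the critical point $\partial\gamma_0$ plus the first variation $2\sum_i \mu(x_i)\cos\theta_i$ of the chord lengths, with error $O(\iota^2) = o(\iota)$) coincides in substance with the paper's.
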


	\begin{proof}
		We first show that $\gamma$ is $O_d(\iota)$ close to $\gamma_0$. We already know that $\gamma$ is close to some orbit in an ellipse, but we don't have a quantitative estimate. In principle, it could be $O(\sqrt{\iota})$ close. We use e.g. \cite{Koval} to get the following:
		\begin{equation}
			\sup_{\substack{0 \leq s \leq \ell\\ -\pi/2 + d \leq \theta \leq \pi/2 - d }}  \left\|B_\Omega(s, \theta) - B_{\Omega_0}(s, \theta) \right\|_{C^9}= O_d(\iota),
			\label{eq613}
		\end{equation}
		for any fixed $d>0$. As $q$ is bounded by Lemma \ref{lema61} and $\gamma$ is close to a $q$-periodic point in the ellipse, its reflection angles are uniformly bounded away from $\pm \pi/2$, independently of $\iota$. This allows us to use \eqref{eq613}. Hence, in a neighborhood of $(s_1(\gamma), \theta_1(\gamma))$, the map $B_\Omega^q$ is $O(\iota)$ close to $B_{\Omega_0}^q$.
		\\
		\\
		Now let $\gamma_0$ be an orbit of period $q$ in $\Omega_0$, whose initial coordinates $(s_1(\gamma_0), \theta_1(\gamma_0))$ are closest to $(s_1(\gamma), \theta_1(\gamma))$ with respect to Euclidean distance. Denote by $v$ the vector connecting $(s_1(\gamma_0), \theta_1(\gamma_0))$ to $(s_1(\gamma), \theta_1(\gamma))$ and consider the linearization of $B_{\Omega_0}^q$ near $\gamma_0$:
		\begin{align*}
			B_{\Omega_0}^{q}(s_1(\gamma), \theta_1(\gamma)) - (s_1(\gamma_0), \theta_1(\gamma_0))=P_{\gamma_0, \Omega_0} v + O(||v||^2). 
		\end{align*}
		Approximating $B_{\Omega_0}^q$ by $B_\Omega^q$, we also have
		\begin{align}
			v= D B_{\Omega_0}^q v + O(||v||^2 + \iota).
			\label{eq615}
		\end{align}     
		Hence, the image of $v$ is close to $v$ itself. Assume for the sake of contradiction that $v \neq O(\iota)$. If $\gamma_0$ was tangent to an elliptic or hyperbolic caustic, then $v$ would be close to the sole eigenvector of the differential of $B_{\Omega_0}^q$. However, a slightly rotated $\gamma_0$ along the caustic will be closer to $\gamma$ than $\gamma_0$. This leads to a contradiction. Equation \eqref{eq615} implies that the orbit $\gamma_0$ cannot be a bouncing ball along the major axis since it is hyperbolic.
		\\
		\\
		The assertion on the length of $\gamma$ follows from the closeness; since the tangent space to the caustic direction is critical for the length functional \eqref{lengthfunctional}, any $O(\iota)$ shift of reflection points along it will not influence the main term in \eqref{eq612}. The perimeter change associated to the normal deformation $\mu$ is described in \eqref{eq612}. 
	\end{proof}
	
	\begin{rema}
			We briefly comment on why Lemma \ref{lema63} fails for bouncing balls along the minor axis. It turns out that for a dense class of ellipses, the rotation number of that elliptic point is \textit{rational}. Hence, the differential of some iterate of the billiard map is the identity: $DB_{\Omega_0}^q = \Id$ (see \cite{kov21}). This is the only orbit in ellipses which exhibits type $2$ degeneracy. From \eqref{eq615}, one sees that $\left\|v\right\| = \sqrt{\iota}$ is indeed possible. In that case, $\gamma$ may wander further from $\gamma_0$, which is why we exclude this case from the lemma. This is also the reason why Condition $2$ for correctly selected orbits in Definition \ref{def62} excludes the minor axis.  
	\end{rema}
	From here on, we will assume that $\xi$ is so small that orbit selection principles $2$, $3$ and $4$ from Definition \ref{def62} extend to an $O(\xi)$-neighborhood of all reflection points in $\d \mathcal{F}_m$. In particular, no periodic orbit of length $L$ should reflect in an $O(\xi)$-neighborhood of the reflection points of two different selected orbits. However, Condition $1$ cannot be extended in the same way, as one can always rotate a selected orbit along its caustic to produce another, whose set of reflection points are $O(\xi)$-close to the original one.
	\\
	\\
	We claim that this is in fact the only obstacle to extending Condition $1$. If $\gamma$ is an orbit which \textit{is not} the rotation of another having length $L$ and being tangent to a caustic in $\Omega_0$, at least one of its reflection points must be away from $\d \Fm$. We select parameters in the following order: first $\xi$, second $\iota$, and third $(\eps_0, \dt_0)$. We will work in the regime
	\begin{equation*}
		(\eps_0, \dt_0) \ll \iota \ll \xi \ll 1.
	\end{equation*}
	Within this regime, it follows that away from a $C\xi$-neighborhood of $\d \Fm$, $\mu(s) \le -\iota$ and $\mu(s) < \iota^2$ on $[0, \ell]$. Let $\gamma_0$ be an orbit of length $L$ in $\Omega_0$ which is neither an exception to Condition $1$ (corresponding to rotation along a caustic) nor a bouncing ball orbit along the minor axis. Then, Lemma \ref{lema63} implies that the second order term in \eqref{eq612} for $\gamma_0$ is dominated by the  contribution of $\mu$ away from $\d \Fm^1$ and is thus negative. Hence, no stray orbit in $\Omega$ can appear close to $\gamma_0$; every stray orbit in $\Omega$ is localized near either the minor axis or a rotation of $\gamma_0$ along its corresponding caustic.
		\begin{figure}
				\centering
				\includegraphics[width=8cm]{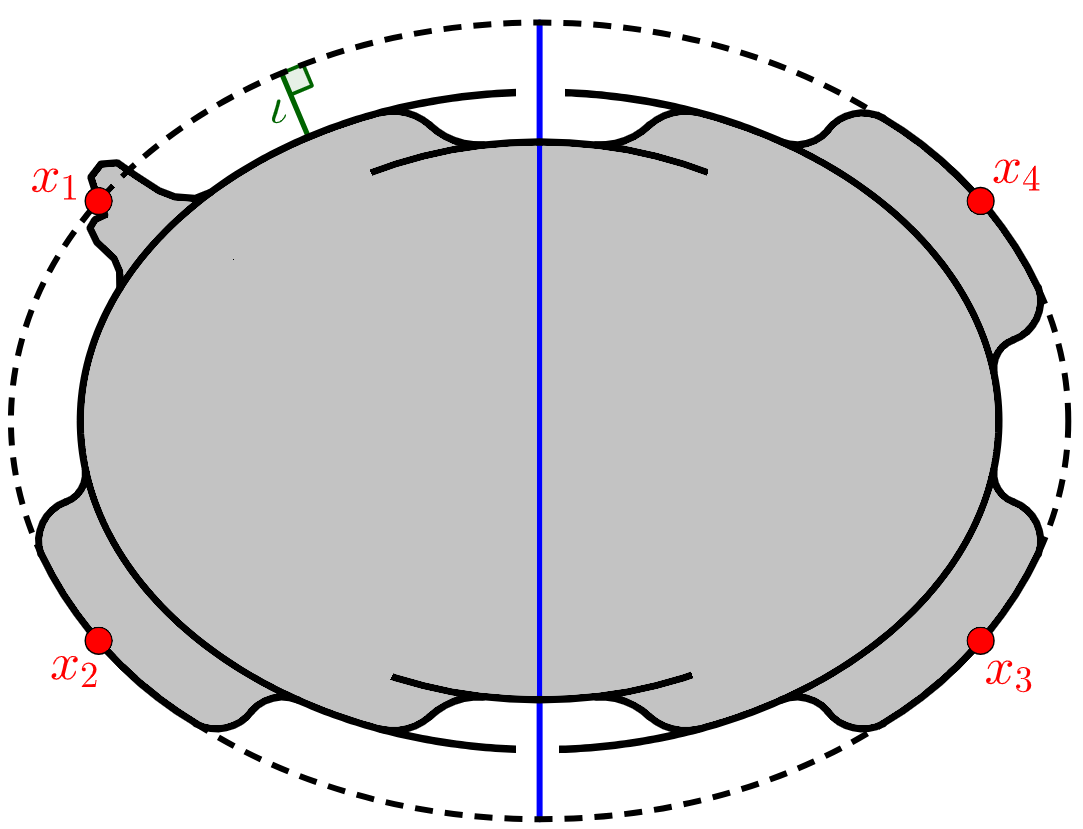}
				\caption{An exaggerated image of the proposed deformation (shaded in gray) with just one periodic orbit $\d \gamma = (x_1, x_2, x_3, x_4)$ (in red). Note that the deformation near the first point is different from the others and that the deformation is a homothetic ellipse near the minor axis. The original ellipse is dashed. For the purpose of illustration only, the shaded domain here is not convex. As convexity is a $C^0$ open property for the boundary curvature and the original ellipse is convex, the deformed domain would also be convex if the deformation is sufficiently small. The lines inside of the shaded domain are also for visualisation purposes only and do not constitute the boundary of the domain.}
				\label{fig:deformationpicture}
			\end{figure}
	At distances $C\xi$-away from $\d \Fm^1$ and $\xi$-away from the minor axis, we set $\mu_2(s) = -\iota$. We now analyze what happens along the minor axis. As we cannot apply Lemma \ref{lema63} for orbits whose reflection points are all near the minor axis, we propose the following: in a $\xi/2$-neighborhood of the minor axis, let $\Omega_{\bar{\eps}, \bar{\dt}}$ be a $1 - O(\iota)$ homothety of $\Omega_0$ with respect to the origin. This ensures that any orbit in $\Omega_{\bar{\eps}, \bar{\dt}}$ which is close to the former minor axis will be the negative dilation of an orbit in an ellipse. It particular, its length will have decreased to be strictly less than $L$. We now smoothly connect the deformation $\xi$-away from the minor axis to the deformation in its $\xi/2$-neighborhood, which resolves the aforementioned issue with the minor axis.
	\\
	\\
	Now consider a $C\xi$-neighborhood of $\d \Fm$. For every orbit $\gamma_u$ in our correctly selected family $\mathcal{F}_m$, we will prescribe a deformation near its reflection points as follows. Recall that the only possible stray orbits arise from rotations of $\gamma_u \in \mathcal{F}_m$ along a caustic. Since they all share the same rotation number (say $p/q$) with $\gamma_u$, it is easier to define the deformation $\mu$ in action angle coordinates $(\eta, I)$, as opposed to arclength coordinates. This way, for each angle $\eta$ at which we sample $\mu$ in \eqref{eq612}, each action angle coordinate of the rotated orbit will be equidistant from the those of $\gamma_u$. Recall from Definition \ref{AAC} that the change of variables from action angle to elliptic coordinates is given by $\Psi(\eta, I) = (\phi, \lambda)$. We need to ensure that for $\eta$ small, the first variation in Lemma \ref{lema63} satisfies
	\begin{equation}
		\sum_{i = 1}^{q} \Psi^* \mu \big(\eta_i(\gamma_u) + \eta \big) \Psi^* \cos \big( \theta(\eta_i(\gamma_u) + \eta \big) + o(\iota) \ne 0.
		\label{eq617}
	\end{equation}
    
	When working in action-angle coordinates, we introduce the parameter $\hat{\xi}$, corresponding to $\xi$, which again quantitatively separates the deformation $\mu$ into local and nonlocal zones. There exists some $\hat{\xi} = O(\xi)$ such that a $\hat{\xi}$-neighborhood in action-angle coordinates of the point $\d{\gamma_u}$ contains $\xi$-neighborhood in arclength coordinates and that all points $C\xi$-arclength away from $\d \gamma_u$ are at least $2\hat{\xi}$-action-angle away from $\d \gamma_u$, i.e.
	\begin{align*}
		\bigcup_{x_i \in \d \gamma} B_{\text{AL}}(x_i, \xi)  \subset& \bigcup_{x_i \in \d \gamma} B_{\text{AA}}(x_i, \hat{\xi}),\\
		\bigcup_{x_i \in \d \gamma} B_{\text{AA}}(x_i, 2 \hat{\xi}) \subset& \bigcup_{x_i \in \d \gamma} B_{\text{AL}}(x_i, C \xi),
	\end{align*}
	where $B_{\text{LA}}(x, r)$ is the ball of arclength radius $r$ centered at $x$ and $B_{\text{AA}}(x,r)$ is the same but with radius measured in the action angle coordinate $\eta$. We are then able to define $\mu_2(\eta)$ from $\hat{\xi}$ to $2\hat{\xi}$ away from reflection points. Inside of the ring, we have $\mu_2(\eta) = 0$ and outside, $\mu_2(\eta) = -\iota$.
	\\
	\\
    We now prescribe the nonlocal deformation $\mu_2$. It will be a sum of bump functions over all orbits $\gamma_u$ in $\Fm$. In what follows, we assume without loss of generality that $\iota$ is much smaller than the minimal distance between reflection points in $\d \Fm$.
	\\
	\\
	\textbf{Away from $\d \Fm^1$.}  Inside of a $\frac{5}{3} \hat{\xi}$ neighborhood of $\d \Fm \backslash \d \Fm^1$, we set $\mu_{2,u}$ to be identically $0$. Between $\frac{5}{3} \hat{\xi}$ and $2\hat{\xi}$, we smoothly interpolate $- \iota \leq \mu_2 \leq 0$. Since $\mu_1$ is supported only near $\d \Fm^1$, this choice of $\mu_2$ completely describes $\mu$ near $\d \Fm \backslash \d \Fm^1$ (see Figure \ref{gluing}).
	\\
	\\
	\textbf{Near $\d \Fm^1$.} Between $\frac{4}{3}\hat{\xi}$ and $2\hat{\xi}$ away from $\d \Fm^1$, we set $\Psi^*\mu_2(\eta)$ to be $-\iota$. This guarantees that as long as $|\mu_1|$ is sufficiently small, \eqref{eq617} will hold for $\frac{4}{3}\hat{\xi} \leq |\eta| \leq 2\hat{\xi}$; all terms in the sum will be negative and $\Psi^* \mu_2(\eta_1 + \eta)$ will differ from $-\iota$, which will dominate both the $\mu_1(\eta_1 + \eta)$ and $o(\iota)$ terms. Since Theorem \ref{th4} requires $\mu_2$ to be $0$ in a $\xi$-neighborhood of $\d \Fm$, we smoothly and nonpositively interpolate it from $0$ to $-\iota$ on the interval between $\hat{\xi}$ and $\frac{4}{3}\hat{\xi}$.
	\\
	\\
	We have now globally prescribed the nonlocal deformation $\mu_2$. It remains to show \eqref{eq617} when $|\eta| < \frac{4}{3} \hat{\xi}$. The values of $\mu_2$ at each reflection point in \eqref{eq617} will lie in a small neighborhood of $0$. Since $\mu_1$ is comparatively small, the remainder term $o(\iota)$ may dominate the expression. We need an alternative way to control stray orbits in a $\frac{4}{3}|\hat{\xi}|$ neighborhood of $\d \Fm$. It is somewhat simplified by the fact that the local deformation $\mu_1$ is zero in a neighborhood of $\d \Fm \backslash \d \Fm^1$.	

	\begin{lemm}
		Assume that in a $\frac{4}{3}\hat{\xi}$ neighborhood of $\d \Fm^1$, the deformation $\mu$ vanishes to first order only when $\eta = \eta_1(\gamma_u)$. Then there are no stray orbits near $\gamma_u$.
	\end{lemm}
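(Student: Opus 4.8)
The plan is to reduce the assertion to a one-parameter problem along the caustic of $\gamma_u$ and then play off the two ways a deformation supported near a single reflection point can destroy a rotated orbit. First I would invoke the reduction already carried out in this section: by Proposition \ref{prop61} together with Conditions $1$ and $4$ of Definition \ref{def62}, any stray orbit of $\Omega$ all of whose reflection points lie within $\tfrac{4}{3}\hat\xi$ (in the action--angle coordinate $\eta$) of $\d\gamma_u$ is, after pullback by $\Phi_{\mu,q}^{-1}$, an $O(\iota)$--perturbation of one of the rotations $\gamma_u^{(\eta)}$ of $\gamma_u$ along its caustic, with $|\eta|<\tfrac{4}{3}\hat\xi$. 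Write $\tilde x_i(\eta)$ for the reflection points of $\gamma_u^{(\eta)}$, so $\tilde x_i(\eta)$ has action--angle coordinate $\eta_i(\gamma_u)+\eta$. Since $\mu_1$ is supported near $\d\Fm^1$, since $\mu_2$ vanishes within $\tfrac{5}{3}\hat\xi$ of $\d\Fm\setminus\d\Fm^1$, and by the choice of $\hat\xi$, for $|\eta|<\tfrac{4}{3}\hat\xi$ only $\tilde x_1(\eta)$ can meet the support of $\mu$; near $\tilde x_2(\eta),\dots,\tilde x_q(\eta)$ one has $\mu\equiv 0$, $\Phi_\mu\equiv\mathrm{id}$, and $\d\Omega$ agrees with $\d\Omega_0$ to infinite order, so these remain genuine $\Omega_0$--billiard vertices of $\gamma_u^{(\eta)}$.

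Fix $\eta\neq 0$; the hypothesis then gives $(\mu,\mu')(\tilde x_1(\eta))\neq(0,0)$, and I would derive a contradiction from two necessary conditions for an orbit $\gamma^\ast$ of $\Omega$ to be $O(\iota)$--close to $\gamma_u^{(\eta)}$. On the one hand, since every reflection point of $\gamma^\ast$ other than the first lies on the undeformed boundary, Lemma \ref{lema63} (in the form \eqref{eq612}, \eqref{eq617}) shows the length of $\gamma^\ast$ equals
\[
L+2\cos\vartheta_1(\eta)\,\mu(\tilde x_1(\eta))+o(\iota),
\]
so $\gamma^\ast$ can have length $L$ only if $\mu(\tilde x_1(\eta))=o(\iota)$. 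On the other hand, let $S_\eta$ denote the configuration of $\gamma_u^{(\eta)}$ with its first vertex pushed to $\Phi_\mu(\tilde x_1(\eta))\in\d\Omega$, and recall from Proposition \ref{prop32} that $\d^2\LL(S_{\gamma_u^{(\eta)}})$ has a one--dimensional kernel $\R h$ with all entries of $h$ of one sign, $\LL_{\Omega_0}$ being constant along the family of rotations tangent to $h$. Because $\d\Omega=\d\Omega_0$ away from $\tilde x_1(\eta)$, the only nonvanishing entries of $\nabla\LL_\Omega(S_\eta)$ are those at the indices $1,2,q$, and a short computation gives
\[
\big\langle \nabla\LL_\Omega(S_\eta),\,h\big\rangle \;=\; 2\,h_1\cos\vartheta_1(\eta)\,\mu'(\tilde x_1(\eta)) \;+\; O\!\big(\mu(\tilde x_1(\eta))\big),
\]
the leading term coming from the tilt of the tangent line of $\d\Omega$ at $\tilde x_1(\eta)$; the existence of $\gamma^\ast$ (solvability of the linearized orbit equation transverse to $h$) forces this pairing to be $o(\iota)$, hence $\mu'(\tilde x_1(\eta))=o(\iota)$ as well. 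Thus whenever $|\mu(\tilde x_1(\eta))|+|\mu'(\tilde x_1(\eta))|$ exceeds a fixed multiple of $\iota$ there is no such $\gamma^\ast$, and by the hierarchy $(\eps_0,\dt_0)\ll\iota\ll\hat\xi\ll 1$ together with the explicit construction of $\mu_1$ in Section \ref{LocalDeformation} this holds for all $\eta$ outside a small controlled neighborhood of $0$.

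Finally, for $\eta$ in that small neighborhood of $0$ one has $\mu=\mu_1$ with $\mu_1,\mu_1'=O(\max(\eps_u,\dt_u))$, so both conditions above are swamped by the error terms and a different argument is needed. Here I would use that the local deformation has been built (Conditions $3$ and $4$ of Definition \ref{def61}) so that $\det(\Id-P_{\gamma_u})=\pm\eps_u\neq 0$ and $\partial_{deg}^3\LL=\dt_u\neq 0$: the first makes $\gamma_u$ an isolated fixed point of $B_\Omega^q$, so on its isolating neighborhood the only periodic orbit of $\Omega$ is $\gamma_u\in\Fm$, which is not stray; the nonvanishing $\partial_{deg}^3\LL$ then governs, via a Lyapunov--Schmidt reduction along the formerly degenerate direction $h$, the reduced length functional $\eta\mapsto\LL_\Omega^{\mathrm{red}}(\eta)$ near the critical manifold, whose only critical point is $\eta=0$ (where $\LL_\Omega^{\mathrm{red}}=L$ because $\gamma_u$ is preserved with all vertices fixed). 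I expect the main obstacle to be precisely the junction of these two regimes: matching the radius of the isolating neighborhood of $\gamma_u$ to the controlled neighborhood of the second paragraph is a quantitative statement about how fast $(\mu_1,\mu_1')(\tilde x_1(\eta))$ departs from $(0,0)$ as $\eta$ leaves $0$, measured against the scales $\eps_u,\dt_u,\iota$, and it is this bookkeeping --- not the qualitative hypothesis alone --- that the construction in Section \ref{LocalDeformation} must supply.
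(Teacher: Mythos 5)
Your argument has a genuine gap, and it is exactly the one you concede at the end: the two regimes do not meet. The first-variation criterion from Lemma \ref{lema63} can only ever force $\mu(\tilde x_1(\eta))=o(\iota)$ and (granting your unproved pairing identity with $h$) $\mu'(\tilde x_1(\eta))=o(\iota)$; this is vacuous precisely on the region where $\mu=\mu_1$ with $\|\mu_1\|_{C^{2m+2}}=O(\max(\eps_u,\dt_u))\ll\iota$, i.e.\ on a whole $\eta$-interval around $\eta_1(\gamma_u)$ whose size is \emph{not} tied to the scales $\eps_u,\dt_u$. Your proposed rescue there --- isolation of $\gamma_u$ from $\det(\Id-P_{\gamma_u})=\pm\eps_u\neq 0$ plus a Lyapunov--Schmidt reduction driven by $\partial_{deg}^3\LL=\dt_u$ --- only produces an isolating neighborhood whose radius shrinks with $\eps_u,\dt_u$, so nothing guarantees it covers the interval on which the $o(\iota)$ criterion is silent. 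Since the deformation parameters are sent to $0$ in Section \ref{Solving the main equation} while $\iota$ is held fixed, this mismatch is not a bookkeeping formality; without it the lemma is not proved. (The intermediate claim that solvability transverse to $h$ forces $\langle\nabla\LL_\Omega(S_\eta),h\rangle=o(\iota)$ is also asserted rather than established, but it is secondary to the junction problem.)

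The paper avoids all scale comparisons by an exact, non-perturbative argument that uses the hypothesis literally. Suppose $\zeta$ is a stray orbit of length $L$ near $\gamma_u$; all its reflection points except the first lie on the undeformed boundary, so freeze them and compare the two auxiliary functionals $\mathcal{L}_1(x_2,\dots,x_q)$ and $\mathcal{L}_1^0(x_2,\dots,x_q)$, in which the first vertex is fixed at the deformed point $x_1(\zeta)=x_1^0+\mu(s_1)\vec n(x_1^0)$, respectively at $x_1^0\in\d\Omega_0$. Each has a unique nondegenerate critical point which is a global maximum, and the critical value of $\mathcal{L}_1^0$ is $L$. If $\mu(s_1)>0$ the first vertex lies strictly outside the ellipse and $\mathcal{L}_1>\mathcal{L}_1^0$ pointwise, so its critical value strictly exceeds $L$; if $\mu(s_1)<0$ it is strictly below $L$. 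Hence $\mu(s_1)=0$ \emph{exactly}, so $\zeta$ coincides with an ellipse orbit; but then the hypothesis gives $\mu'(s_1)\neq 0$ (unless $s_1=s_1(\gamma_u)$, i.e.\ $\zeta=\gamma_u$), which tilts the tangent line of $\d\Omega$ at $x_1(\zeta)$ and violates the equal-angles condition --- a contradiction. Note this handles every stray orbit near $\gamma_u$ at once, with no restriction to rotations along the caustic and no dependence on the hierarchy $(\eps_0,\dt_0)\ll\iota\ll\xi$. If you want to salvage your route, you would have to prove the quantitative matching you flagged; the cleaner fix is to replace the perturbative criteria by this exact critical-value comparison.
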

	
	\begin{proof}
		Assume to the contrary that there exist stray orbits of length $L$ in a $\frac{4}{3} \hat{\xi}$ neighborhood of the first reflection point in the perturbed domain. Let $\zeta$ be such an orbit which is close to $\gamma_u$, but not equal to it, and denote its first reflection point by $x_1(\zeta)$ (near $x_1(\gamma_u)$). Under the deformation, $x_1$ becomes
		\begin{equation}
			x_1(\zeta) = x_1^0 + \Vec{n}(x_1^0)\cdot \mu(s_1), 
		\end{equation}
		where $x_1^0 = x_{\Omega_0}(s_1)$ is a point on the starting ellipse $\Omega_0 = \Ee$. Fixing $x_1(\zeta)$ and $x_1^0$, we define two auxiliary length functionals:
		\begin{align*}
			\mathcal{L}_1(x_2, x_3, \ldots, x_q) =& ||x_2 - x_1(\zeta)|| + ||x_3 - x_2|| + \ldots + ||x_1(\zeta) - x_q||,\\
			\mathcal{L}_1^0(x_2, x_3, \ldots, x_q) =& ||x_2 - x_1^0|| + ||x_3 - x_2|| + \ldots + ||x_1^0 - x_q||,
		\end{align*}
		where $x_2, x_3, \ldots, x_q$ are points in the ellipse which are close to the respective reflection points of $\gamma_u$. They remain fixed under the deformation, which is identically zero there. 
		\\
		\\
		The functional $\mathcal{L}_1^0$ has a unique critical point which corresponds to a billiard orbit starting at $x_1^0$. The corresponding critical value $L$ is a global maximum. Nondegeneracy of this critical point is equivalent to transversality of $\nabla \mathcal{L}_1^0$ to the zero section, which is a $C^1$ open property. Since $\mathcal{L}_1$ is close to $\mathcal{L}_1^0$, it also has a unique nondegenerate critical point corresponding to a global maximum. If $x_1(\zeta)$ were outside of the ellipse, $\mathcal{L}_1$ would be strictly greater than $\mathcal{L}_1^0$ for any choice of $x_2, x_3, \ldots, x_q$; its unique critical value would be strictly greater than $L$. Similarly, if $x_1(\zeta)$ were inside the ellipse, the critical value would be strictly smaller.
		\\
		\\
		We conclude that $\mu(s_1) = 0$ and thus $x_1(\zeta) = x_1^0$. It follows that $\mathcal{L}_1$ coincides with $\mathcal{L}_1^0$ and the stray orbit in our deformation in fact coincides with an orbit in the starting ellipse. However, $\mu'(s_1) \ne 0$ and the tangent direction at $x_1^0$ is perturbed nontrivially. This means that the angles of incidence and reflection associated to $\zeta$ disagree at $x_1(\zeta)$, which is a contradiction.  
	\end{proof}
	Hence we have managed to control all stray orbits, completing the proof of Theorem \ref{th4}.
	
	\begin{rema}
		Assume $\mu_1 \equiv 0$. Since $\mu_2$ is always non-positive, the resulting deformation will be contained inside of the starting ellipse. Moreover, the perturbation will remain convex, since the $C^2$ norm of $\mu$ is small. The perimeter of the deformed domain will be smaller than that of the ellipse $\Omega_0$. Assuming the local perturbation $\mu_1$ is sufficiently small, the decrease in perimeter under deformation $\mu = \mu_1 + \mu_2$ will continue to hold.
	\end{rema}

		\begin{figure}
				\centering
				\includegraphics[width=12cm]{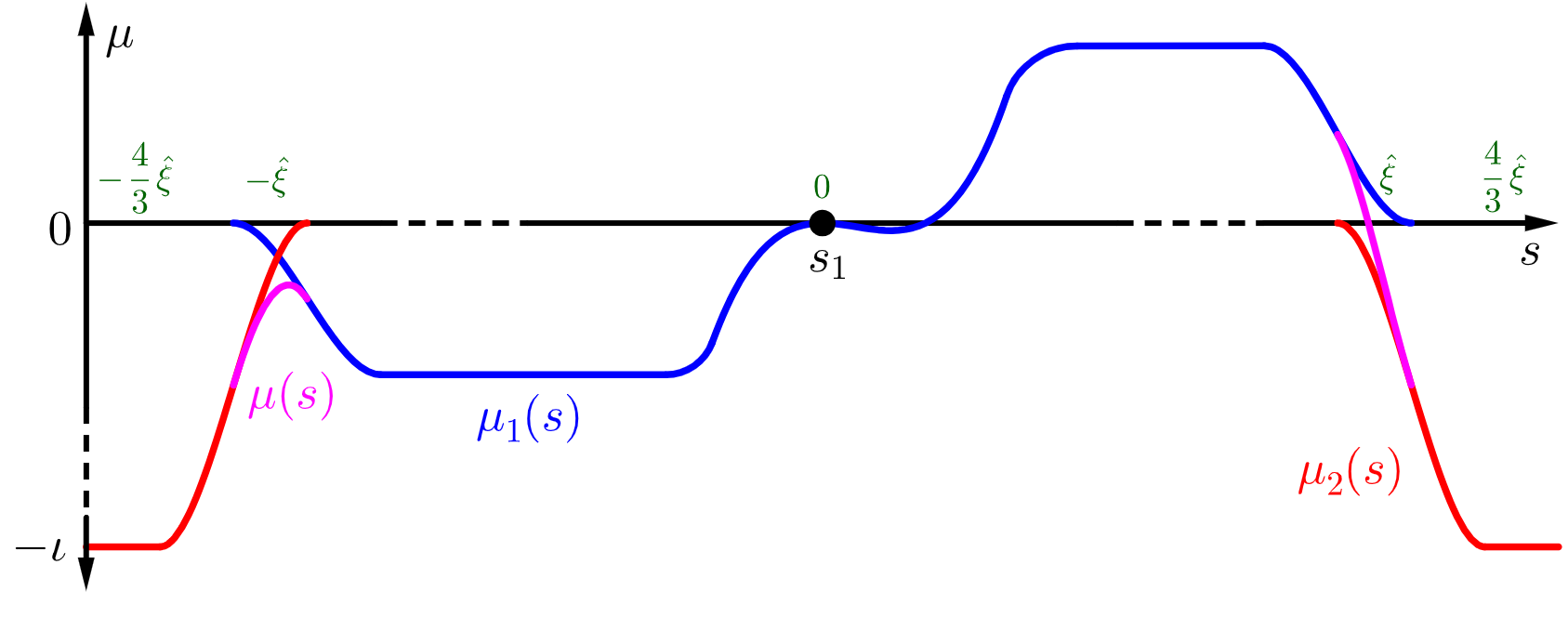}
				\caption{A portrait of the deformation near a first reflection point, with $s$ being the arclength coordinate of a point $x(s) \in \Ee$. The local part $\mu_1$ is in blue, the non-local part $\mu_2$ in red, and their sum, $\mu$, in pink. Green text denotes the action-angle coordinates of a point.}
				\label{gluing}
			\end{figure}

	\subsection{Proof of Theorem \ref{th3}}\label{LocalDeformation}
	We now turn our focus to the local deformation $\mu_1(s)$ near $\d \Fm^1$. Let $\gamma_u \in \d \Fm$ be one of our selected orbits. We first prescribe the jet of $\mu_1$ at $x_1(\gamma_u)$ (abbreviated by $x_1$ from now on), so that the deformed length functional will satisfy Condition $4$ of Definition \ref{def61}:
    \begin{align*}
        |\det \d^2 \LL(S_{\gamma_u})| = \eps_u, \quad \d^3 \LL( S_{\gamma_u}) = \dt_u.
    \end{align*}
    We then smoothly connect $\mu_1$ at the reflection point $x_1$ to the nonlocal deformation $\mu_2$.
    \\
    \\
    We begin with the following lemma.
	\begin{lemm}
		Let $\Omega_0 = \Ee$ be deformed by $\mu(s)\Vec{n}(s)$ in a neighborhood of $x_1(\gamma_u)$ for some orbit $\gamma_u \in \Fm$, where $\mu(s)$ is a smooth function satisfying $\mu(s_1) = \mu'(s_1) = 0$. We then have
		\begin{align}
            \begin{split}
                \det \partial^2 \mathcal{L}_{\Omega}(\gamma_u) =& C_1 (-1)^{q-1}\mu''(s_1) \; \partial_{deg}^3 \mathcal{L}_{\Omega}(\gamma_u)\\
                =& C_2\mu''(s_1) + C_3\mu'''(s_1) + C_4 \mu''(s_1)^2,
            \end{split}
		\end{align}
		where $C_1$, $C_2$, $C_3$ and $C_4$ depend only on $\gamma_u$ and $C_1, C_3 > 0$.
  \label{eq621}
	\end{lemm}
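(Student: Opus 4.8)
The plan is to track precisely which ingredients of the length functional and of its Hessian are moved by a deformation of this form. Since $\mu(s_1)=\mu'(s_1)=0$, the perturbed boundary still passes through $x_1(\gamma_u)$ with the same tangent line, so the reflection law there is unaffected and $\gamma_u$ persists as a billiard orbit in $\Omega$ with the \emph{same} reflection coordinates $S_{\gamma_u}$; the only boundary data that change near $x_1$ are the curvature $\kappa_1$ and its first arclength derivative $\kappa_1^{(1)}$. Evaluating the standard normal–deformation formulas at $s_1$, where $\mu=\mu'=0$, gives $\kappa_1\mapsto\kappa_1-\mu''(s_1)$ and $\kappa_1^{(1)}\mapsto\kappa_1^{(1)}-\mu'''(s_1)$ with no further corrections.

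To compute $\det\partial^2\mathcal{L}_\Omega(\gamma_u)$, observe that the tridiagonal Hessian has exactly one entry depending on $\kappa_1$, namely the diagonal entry $a_1$, and \eqref{eq37} shows $\partial a_1/\partial\kappa_1=-4\cos\vartheta_1$; every other entry is built from the unchanged reflection points, the reflection angles $\vartheta_i$, the link lengths, and the curvatures $\kappa_i$ with $i\neq1$. Hence $\partial^2\mathcal{L}_\Omega(S_{\gamma_u})=\partial^2\mathcal{L}_{\Omega_0}(S_{\gamma_u})+\delta a_1\,e_1 e_1^{T}$ with $\delta a_1=4\cos\vartheta_1\,\mu''(s_1)$. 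By Proposition \ref{prop32}, $\partial^2\mathcal{L}_{\Omega_0}(S_{\gamma_u})$ has a one–dimensional kernel and $q-1$ strictly negative eigenvalues, so its adjugate is the rank-one matrix $M=\text{adj}\,\partial^2\mathcal{L}_{\Omega_0}$ with $M_{\alpha\beta}=c_0 h_\alpha h_\beta$, where $h$ is componentwise positive and $\text{sgn}(c_0)=(-1)^{q-1}$ (it is a product of $q-1$ negative eigenvalues). The exact rank-one update identity $\det(A+c\,e_1 e_1^{T})=\det A+c\,(\text{adj}\,A)_{11}$ then gives $\det\partial^2\mathcal{L}_\Omega(\gamma_u)=\delta a_1 M_{11}=4\cos\vartheta_1 h_1^2 c_0\,\mu''(s_1)$, which has the asserted form $C_1(-1)^{q-1}\mu''(s_1)$ with $C_1=4\cos\vartheta_1 h_1^2|c_0|>0$ depending only on $\gamma_u$. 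Here $\cos\vartheta_1>0$ because positivity of the off-diagonal entries $b_i=\cos\vartheta_i\cos\vartheta_{i+1}/|x_i-x_{i+1}|$ forces all $\cos\vartheta_i$ to share a sign, and $\cos\vartheta_1\neq0$ since our caustic orbits have no orthogonal reflections.

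For $\partial_{deg}^3\mathcal{L}_\Omega(\gamma_u)=\sum h_{i_1}h_{i_2}h_{i_3}\partial^3_{i_1i_2i_3}\mathcal{L}_\Omega$, Lemma \ref{lema31} gives that this vanishes for $\Omega_0$, so it equals the sum of the \emph{changes} in the third derivatives; only the two links $\overline{x_q x_1}$ and $\overline{x_1 x_2}$ meet $x_1$, so the relevant index triples lie in $\{q,1\}$ or $\{1,2\}$. One checks that $\partial_s^3|x(s)-y|$ is affine in the pair $(x''(s),x'''(s))$ once $x(s)$, $x'(s)$ and $y$ are held fixed; combined with the fact that the changes $\delta x''(s_1)$ and $\delta x'''(s_1)$ are themselves affine in $(\mu''(s_1),\mu'''(s_1))$, this yields $\partial_{deg}^3\mathcal{L}_\Omega=C_2\mu''(s_1)+C_3\mu'''(s_1)$ plus a controllable quadratic term $C_4\mu''(s_1)^2$ (arising when the contraction is taken against the perturbed near-kernel vector of $\partial^2\mathcal{L}_\Omega$ rather than the frozen $h$). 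The $\mu'''(s_1)$–coefficient is $h_1^3$ times the sum over the two incident links of $\partial_{r'''}(\partial_s^3|r|)=r/|r|$ paired with the normal at $x_1$, where $r=x(s)-y$; by the reflection law the incoming and outgoing ray directions have opposite normal components at $x_1$, so these two terms reinforce, whence $C_3>0$. All $C_i$ depend only on $\gamma_u$, and combining the two displays gives the stated identities.

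The step I expect to be most delicate is the sign bookkeeping. That $\det\partial^2\mathcal{L}_\Omega$ carries \emph{exactly} the sign $(-1)^{q-1}$ rests on the eigenvalue count for the ellipse Hessian in Proposition \ref{prop32}, and strict positivity of $C_3$ requires checking that the two link-contributions to the $\mu'''(s_1)$–derivative of $\partial_{deg}^3\mathcal{L}$ add rather than cancel; both come down to careful use of the law of reflection at $x_1$ and the positivity statements of \cite{KKV} and Proposition \ref{prop32}.
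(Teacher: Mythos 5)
The first half of your argument (the determinant identity) is correct and is essentially the paper's proof in a slightly cleaner form: only the diagonal entry $a_1$ of the tridiagonal Hessian moves, by exactly $\delta a_1=4\cos\vartheta_1\,\mu''(s_1)$ since the curvature at $x_1$ becomes $\kappa_1-\mu''(s_1)$, and the rank-one update identity applied to the degenerate ellipse Hessian, whose adjugate is $c_0\,hh^{T}$ with $\operatorname{sgn}(c_0)=(-1)^{q-1}$ and $h$ componentwise nonzero (Proposition \ref{prop32}), yields $C_1(-1)^{q-1}\mu''(s_1)$ with $C_1>0$; the paper reaches the same sign via interlacing applied to the principal minor, so this is the same argument.

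The gap is in the third-derivative half. In this paper $\partial_{deg}^3\mathcal{L}_\Omega$ is the contraction of $\partial^3\mathcal{L}_\Omega$ against the \emph{frozen} kernel vector $h$ of the ellipse Hessian (the adjugate $M=\operatorname{adj}\partial^2\mathcal{L}$ is taken in $\Omega_0$), so your explanation of the $C_4\mu''(s_1)^2$ term as coming from ``contraction against the perturbed near-kernel vector'' is not the right mechanism; with your stated premises the quadratic term would simply not appear, because you claim the relevant third partials change affinely in $(\mu''(s_1),\mu'''(s_1))$. That affineness claim is where the argument breaks: the partials in \eqref{eq37} and in the lemma are taken with respect to arclength on the \emph{deformed} boundary, and in an arclength frame $x''$ and $x'''$ are not free vectors — $|x'|\equiv 1$ forces $x''=\kappa N$ and $x'''=\kappa' N-\kappa^2 T$ (with $T$, $N$ the unit tangent and inward normal) — so under $\kappa_1\mapsto\kappa_1-\mu''(s_1)$, $\kappa_1'\mapsto\kappa_1'-\mu'''(s_1)$ the tangential part of $x'''(s_1)$ (equivalently, the reparametrization by the new arclength) genuinely produces a $\mu''(s_1)^2$ contribution. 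This is exactly the quadratic term the paper extracts when differentiating $\cos\vartheta_1\,\kappa_1$ inside $\partial a_1/\partial s_1$. Relatedly, your sign argument for $C_3>0$ is garbled: the coefficient of $\mu'''(s_1)$ pairs the outward normal at $x_1$ with the two unit vectors pointing from $x_q$ and from $x_2$ toward $x_1$, and by the reflection law these have \emph{equal} positive normal components, which is why the two link contributions add; ``opposite normal components'' would suggest cancellation. Your final formula agrees with the lemma, but as written the derivation misattributes the source of $C_4\mu''(s_1)^2$ and rests on an affineness statement that is false in the arclength frame in which the lemma is formulated.
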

	\begin{proof}
		Consider the Hessian $\d^2 \LL_{\Omega}$. First, note that the it depends on $\mu$ only through the entry $a_1$ in \eqref{eq37}, owing to the factor $\kappa_1$ (curvature at the first reflection point, $x_1$). All other entries depend either on the constant $1$-jet or on curvatures at undeformed reflection points only. We use the following formula for $\kappa$:
		\begin{equation}
			\kappa = \frac{\det(\alpha', \alpha'')}{||\alpha'||^3},
			\label{eq622}
		\end{equation}
		where $\alpha$ is any parametrization of $\d \Omega$. The arclength coordinate on the ellipse is given by
        \begin{align*}
            s = \int_0^\phi \sqrt{\sin^2 \vartheta + (1- e^2) \cos^2 \vartheta} d\vartheta,
        \end{align*}
        an elliptic integral of the second kind, which parametrizes $\Omega$ in \eqref{eq62}. At $x_1$, we have:
		\begin{align*}
			\alpha_{\Omega}''(s_1) = \alpha_{\Omega_0}''(s_1) + \mu''(s_1) \Vec{n}(s_1).
		\end{align*}
		Hence, $\kappa_1$ decreases linearly in $\mu''(s_1)$ and $a_1$ increases linearly in $\mu''(s_1)$. Since the determinant is linear in $a_1$, it also depends linearly on $\mu''$. To find the sign of $C_1$, we compute the sign of the principal minor of $\partial^2 \mathcal{L}$ associated to $a_1$. By the eigenvalue interlacing theorem, all eigenvalues of that submatrix are nonpositive. Moreover, they are distinct from $0$, since $h_1 \ne 0$. This contributes a factor of $(-1)^{q-1}$ in the principal minor associated to $a_1$ and also in \eqref{eq621}.
		\\
        \\
		Now consider the third derivatives, differentiating both $a_i$ and $b_i$. This produces many derivatives of $\cos \theta_i$ and $|x(s_i)-x(s_{i+1})|$, but they are all linear in terms of curvature and thus only contribute to $C_2$. This reduces our study to
		\begin{equation}
			\frac{\partial a_1}{\partial s_1} =  \frac{\d}{\d s} 2 \cos \vartheta_i \left(\frac{\cos \vartheta_i}{|x(s_i) - x(s_{i-1})|}+ \frac{\cos \vartheta_i}{ |x(s_i) - x(s_{i+1})|} - 2\kappa_i\right),
			\label{eq624}
		\end{equation}
		and specifically the term $-4\cos \theta_1 \kappa_1$, which has maximal differential degree. The derivative of $\cos \theta_1$ is linear in the curvature and will contribute to $C_4$. We are therefore only interested in $\kappa_1'$. Instead of differentiating with respect to $\Omega$ arclength, we differentiate \eqref{eq622} with respect to $\Omega_0$ coordinates. This yields a positive multiple of the former, since locally the deformation is given by $x \mapsto x + O(x^2)$ which has derivative $1$ at the origin.
		\\
        \\
		We obtain terms which are quadratic in $\mu''$ and potentially increase $C_4$, together with $\alpha'''$, itself featuring a $\Vec{n}(s)\mu'''(s)$ term. Along with $\det(\alpha', \alpha''')$, the value $\kappa_1'$ is linearly decreasing in $\mu'''$. It follows that \eqref{eq624} and hence $\partial_{deg}^3 \mathcal{L}_{\Omega}(\gamma_u)$ both increase linearly in $\mu'''(s_1)$.
	\end{proof}
    
	We now translate Lemma \eqref{eq621} into a set of conditions on a controllable deformation:
	\begin{equation}
		\mu''(s_1) = \pm C_1^{-1} \eps_u, \; \; \mu'''(s_1) = C_3^{-1} \left( \dt_u \mp C_2C_1^{-1}\eps_u - C_4C_1^{-2}\eps_u^2 \right).
		\label{eq625}
	\end{equation}
	In particular, the signs of $\mu''$ and $\mu'''$ remain constant for every deformation in the family, since $\dt_u > \sqrt{\eps_u}$. 
	\\
    \\
	We set
	\begin{equation}
		\mu_1(s) = \frac{\mu''(s_1)}{2} (s-s_1)^2 + \frac{\mu'''(s_1)}{6} (s-s_1)^3  
		\label{eq626}
	\end{equation} 
	in a small neighborhood $U$ of $s_1$ (of size between $\iota$ and $\eps_0$). In view of \eqref{eq625}, we see that $\mu'''(s_1)$ dominates $\mu''(s_1)$. Hence, $\mu_1$ shares a common sign with $\mu'''(s_1)(s-s_1)^3$ in $U$. Observe also that in this neighborhood, $\mu(s) = \mu'(s) = 0$ holds only for $s = s_1$, since \eqref{eq626} produces a polynomial of third degree which cannot have degenerate zeros at two different points.
	\\
    \\
	Outside of the neighborhood $U$, we smoothly connect this $\mu$-polynomial to a fixed constant (independent of $\bar{\eps}$ and $\bar{\dt}$), which shares its sign with $\mu'''(s_1)(s-s_1)^3$ and avoids crossing zero. Past this junction, $\mu_1$ is independent of $\eps_1$ and $\dt_1$ and remains constant until being once again, smoothly connected to $0$. This second connection takes place outside of a $\hat{\xi}$-neighborhood of $x_1$ in action-angle coordinates, so that $\mu_1$ and $\mu_2$ never vanish simultaneously. We can choose it in such a way that $\mu_1 + \mu_2$ does not vanish to order $\geq 2$ at any point. This finalizes the construction of controllable families of deformations and hence completes the proof of Theorem \ref{th3}.
	
	\section{Proof of Theorem \ref{mainth}}\label{Solving the main equation}
	In Section \ref{Destroying other orbits}, we constructed controllable families of deformations near any ellipse with eccentricity satisfying Proposition \ref{LengthCoincidences}. Now, \textit{for any} controllable family, we solve for parameters $\bar{\eps}$ and $\bar{\dt}$ at which the corresponding domain $\Omega_{\bar{\eps}, \bar{\dt}}$ has cancellations of the Balian-Bloch-Zelditch wave invariants $B_{j,\gamma}$. To do so, we express \eqref{eq325} in terms of $\bar{\eps}, \bar{\dt}$ and solve the resulting system of equations \eqref{eq15}. 
	\\
    \\
	These equations feature the error terms $\mathcal R_j$, coming from \eqref{eq325}. Since computation of their exact structure is intractable and they are higher order in $\eps$, we first solve the linearized system and ignore error terms in Section \ref{Linearized}. This will provide parameters $\bar{\eps}_{\text{pre}}$ and $\bar{\dt}_{\text{pre}}$ for which the leading order terms in \eqref{eq15} cancel. We will then apply a fixed point theorem in Section \ref{Dealing with the errors}, which plays the role of a generalized inverse function theorem and guarantees the existence of nearby deformation parameters $\bar{\eps}_{\text{fin}}$ and $\bar{\dt}_{\text{fin}}$ for which the error terms are also canceled. 

    \subsection{Parameter matching in the linearized system}\label{Linearized}
 
	We now find parameters $\bar{\eps}_{\text{pre}}$ and $\bar{\dt}_{\text{pre}}$ to solve the system of equations \eqref{eq15}, temporarily ignoring the remainders $\mathcal{R}_j$. The leading order term in $\mathcal D_{\gamma_u} B_{j, \gamma_u}$ can be expressed as
	\begin{align*}
		e^{ikL} e^{i\pi n_{u, j}/4} \frac{C_u}{w(j)} \dt_u^{2j} \eps_u^{-3j- 1/2},
	\end{align*}
	where $n_{u, j} \in \mathbb Z$ is part of the phase and $C_u > 0$ is independent of the deformation. Recalling the phases of different orbits in equation \ref{eq41}, we obtain the following reduced system of equations:
	\begin{equation}
		\begin{cases}
			\sum_{u = 1}^m C_u \dt_u^{2j} \eps_u^{-3j- 1/2} = \sum_{u = 2m+1}^{3m} C_u \dt_u^{2j} \eps_u^{-3j- 1/2}, & 0 \leq j \leq m,\\
			\sum_{u = m+1}^{2m} C_u \dt_u^{2j} \eps_u^{-3j- 1/2} = \sum_{u = 3m+1}^{4m} C_u \dt_u^{2j} \eps_u^{-3j- 1/2}, & 0 \leq j \leq m.
		\end{cases}
		\label{eq72}
	\end{equation}
    \\
    \\
	Trivially, one can always take $\bar{\dt} = 0$ (i.e. $\dt_u = 0$ for each $u$) as a solution of the system. This indeed satisfies \eqref{eq72}, but is not amenable to solving the perturbed system with error terms $\mathcal{R}_j$. Since we will eliminate the remainders by a small perturbation of the parameters, we want the linearized system \eqref{eq72} (the Jacobian) to be nondegenerate in $\bar{\eps}$ and $\bar{\dt}$. This clearly does not hold for the trivial solution $\bar{\dt} = 0$.
	\\
    \\
	Instead, we propose the following reduction. First set 
	\begin{align*}
		\eps_{u, \text{pre}} = C_u^2 \cdot \eps_{\text{com}},
	\end{align*}
	where $\eps_{\text{com}}$ is a common small positive number. Equation \eqref{eq72} will then be automatically satisfied for $j=0$. Moreover, since $\mathcal R_0 = 0$, this will even satisfy \eqref{eq15} for $j=0$. The remaining equations will be reduced to
	\begin{equation}
		\begin{cases}
			\sum_{u = 1}^m C_u^{-6j} \dt_u^{2j}\;\;\; \;= \sum_{u = 2m+1}^{3m} C_u^{-6j} \dt_u^{2j}, \; \; \; j = 1, 2, \ldots, m\\
			\sum_{u = m+1}^{2m} C_u^{-6j} \dt_u^{2j} = \sum_{u = 3m+1}^{4m} C_u^{-6j} \dt_u^{2j}, \; \; \; j = 1, 2,\ldots, m
		\end{cases}.
		\label{eq74}
	\end{equation}
	We choose
	\begin{equation}
		\dt_{vm + u, \text{pre}} = u \cdot C_{vm+u}^3 \cdot \dt_{\text{com}}, \quad 0 \leq v \leq 3, \quad  1 \leq u \leq m,
		\label{eq75}
	\end{equation}
	and $\dt_{\text{com}}$ to be another small and positive common factor. \eqref{eq75} satisfies the reduced system \eqref{eq74}. Furthermore, note that for the selected values of $\left\{ \dt_u \right\}_{u = 1}^{2m}$, the Jacobian of \eqref{eq74} can be expressed in terms of a Vandermonde matrix and is thus of full rank -- $2m$ (this is the reason for the factor $u$ in \eqref{eq75}). In fact, the Jacobian matrix takes the following form:
    \begin{align*}
       F_0' =& \begin{pmatrix}
            V_1 & 0 \\
            0 & V_2
        \end{pmatrix},
    \end{align*}
    with $V_1$ and $V_2$ being $m\times m$ matrices defined by
    \begin{align*}
        V_1 = \left\{ C_u^{-3} \times 2j \cdot \delta_{\text{com}}^{2j-1} \times u^{2j-1}\right\}_{1 \le j \le m, \; 1 \le u \le m}
    \end{align*}
    and 
    \begin{align*}
        V_2 = \left\{ C_{m+u}^{-3} \times 2j \cdot \delta_{\text{com}}^{2j-1} \times u^{2j-1}\right\}_{1 \le j \le m, \; 1 \le u \le m}.
    \end{align*}
    Note that the first $2$ factors are common within rows or columns of the Jacobian, so they don't affect singularity of the matrix. Meanwhile, $u^{2j-1}$ provides the Vandermonde structure, from which we see that the matrix indeed has full rank.
    \\
    \\
    As we have just demonstrated nondegeneracy of the Jacobian using only the first $2m$ parameters $\delta_u$, we are left with $2m$ additional free parameters $\dt_u$, $2m+1 \leq u \leq 4m$. We can then set them to be
	\begin{equation}\label{last2m}
		\bar{\eps}_{\text{fin}} = \bar{\eps}_{\text{pre}}, \; \;\;\;\;\; \dt_{u, \text{fin}} = \dt_{u, \text{pre}}, \; u = 2m+1, \ldots, 4m,
	\end{equation}
    which has no impact on solvability of \eqref{eq15}.
 
	\subsection{Solution of the nonlinear system}\label{Dealing with the errors}
	
	We now consider contributions of the remainder terms $\mathcal{R}_j$ and choose values of $\dt_{1, \text{fin}}, \ldots, \dt_{2m, \text{fin}}$ which produce a full cancellation. We use a quantitative version of the inverse function theorem to find local solutions of the nonlinear system from invertibility of the linearized one. More specifically, we use Brouwer's fixed point theorem. From here on, we will truncate $\bar{\dt}$ after its first $2m$ components, fixing the remaining $2m$ as in \eqref{last2m}. Let $\dt_0$ be such that our deformation is well defined for $\dt_{u, \text{fin}} < \dt_0$ and define the map $F: (0,\dt_0)^{2m} \to \R^m \times \R^m$ by setting $F(\dt_{1, \text{fin}}, \cdots, \dt_{m, \text{fin}})$ equal to
	\begin{align*}
		\sum_{\gamma: \text{length}(\gamma) = L} \big((\Re B_{\gamma, 0}), \cdots, \Re (B_{\gamma, m}), \Im (B_{\gamma, 0}), \cdots, \Im ( B_{\gamma, m})\big).
	\end{align*}
	Clearly, a solution to $F(\dt_{1,\text{fin}}, \cdots, \dt_{m, \text{fin}}) = 0$ gives cancellations in \eqref{eq15}. As before, $F$ consists of a main term and and error term:
	\begin{align*}
		F = F_0 + F_{err}.
	\end{align*}
	Taylor expanding $F_0$ at $\bar{\dt} = \bar{\dt}_{\text{pre}}$, we obtain
	\begin{align*}
		F_0(\bar{\dt}) = F_0'(\bar{\dt} - \bar{\dt}_{\text{pre}}) + F_{0, quad} (\bar{\dt}),
	\end{align*}
	with $F_{0, quad} (\bar{\dt}) = O((\bar{\dt} - \bar{\dt}_{\text{pre}})^2)$. The equation $F(\bar{\delta}) = 0$ can be rewritten as
	\begin{align*}
		F_0'(\bar{\dt} - \bar{\dt}_{\text{pre}}) = -F_{0, quad}(\bar \dt) - F_{err}(\bar \dt),
	\end{align*}
	or alternatively, using the Vandermonde property,
	\begin{align}\label{Iterate}
		\bar \dt = \bar{\dt}_{\text{pre}} - \left(F_0'\right)^{-1} \left(F_{0, quad}(\bar \dt) + F_{err}(\bar \dt)\right).
	\end{align}
	This gives us a map on $\bar \dt$ to which we can iterate.
	\\
    \\
	We first select $\eps_{\text{com}} \ll \dt_{\text{com}}^{3m}$ so that the leading order terms in Theorem \ref{partone} dominate the remainders $\mathcal{R}_j$. This ensures that our iterative procedure will indeed converge. We then select a small cube around $\bar \dt_{\text{pre}}$ to be our compact and convex set. The map \eqref{Iterate} is continuous as a consequence of Theorem \ref{partone}. Since both error terms are comparatively small, it also maps the cube into itself. Brouwer's fixed point theorem then implies the existence of a fixed point. The corresponding domain (in whichever controllable family we considered) associated to this fixed point will exhibit the required cancellations. This finishes the proof of Theorem \ref{mainth}.
	
	\section{Acknowledgments} The first author acknowledges the support of ERC Grant \#885707. The second author is grateful to both the Institute of Science and Technology Austria and the Erwin Schr\"odinger Institute for hosting him in 2022 and 2023. We would especially like to thank Vadim Kaloshin for sharing several key ideas and participating in much of our early work towards producing cancellations. This project began in 2019 and involved many group discussions together with Hamid Hezari, Vadim Kaloshin, and Steve Zelditch. It persisted throughout the coronavirus pandemic, the war in Ukraine and the untimely passing of our dear friend and mentor, Steve Zelditch, to whom we dedicated the work \cite{KKV}.
	
	\medskip
	
	\bibliographystyle{alpha}
	\bibliography{MB1}
	
\end{document}